\newenvironment{myabstract}{\par\noindent
{\bf Abstract . } \small }
{\par\vskip8pt minus3pt\rm}
\newcounter{item}[section]
\newcounter{kirshr}
\newcounter{kirsha}
\newcounter{kirshb}
\newenvironment{enumarab}{\setcounter{kirshb}{1}
\begin{list}{(\arabic{kirshb})}{\usecounter{kirshb}} }{\end{list}}
\newtheorem{theorem}{Theorem}[section]
\newtheorem{lemma}[theorem]{Lemma}
\newtheorem{corollary}[theorem]{Corollary}
\newenvironment{demo}[1]{\noindent{\bf #1.}\upshape\mdseries}
{\nopagebreak{\hfill\rule{2mm}{2mm}\nopagebreak}\par\normalfont}
\theoremstyle{definition}
\newtheorem{example}[theorem]{Example}
\newtheorem{definition}[theorem]{Definition}
\def\R{\mathbb{R}}
\def\Q{\mathbb{Q}}
\def\C{{\mathfrak{C}}}
\def\Fm{{\mathfrak{Fm}}}
\def\At{{\bf At}}
\def\Nr{{\mathfrak{Nr}}}
\def\Sg{{\mathfrak{Sg}}}
\def\Fm{{\mathfrak{Fm}}}
\def\A{{\mathfrak{A}}}
\def\B{{\mathfrak{B}}}
\def\C{{\mathfrak{C}}}
\def\D{{\mathfrak{D}}}
\def\M{{\mathfrak{M}}}
\def\N{{\mathfrak{N}}}
\def\CA{{\bf CA}}
\def\QA{{\bf QA}}
\def\QEA{{\bf QEA}}
\def\Df{{\bf Df}}
\def\PA{{\bf PA}}
\def\PEA{{\bf PEA}}
\def\K{{\bf K}}
\def\K{{\bf K}}
\def\RCA{{\bf RCA}}
\def\Rd{{\ Rd}}
\def\(R)RA{{\bf (R)RA}}
\def\RA{{\bf RA}}
\def\R{\mathbb{R}}
\def\Q{\mathbb{Q}}
\def\Sc{{\bf Sc}}
\def\Id{{\bf Id}}
\def\c #1{{\cal #1}}
 \def\CA{{\sf CA}}
\def\B{{\sf B}}
\def\G{{\sf G}}
\def\w{{\sf w}}
\def\y{{\sf y}}
\def\g{{\sf g}}
\def\b{{\sf b}}
\def\r{{\sf r}}
\def\K{{\sf K}}
 \def\Cm{{\mathfrak{Cm}}}
\def\Nr{{\mathfrak{Nr}}}
\def\restr #1{{\restriction_{#1}}}
\def\cyl#1{{\sf c}_{#1}}
\def\diag#1#2{{\sf d}_{#1#2}}
\def\sub#1#2{{\sf s}^{#1}_{#2}}
\def\R{\sf R}
\def\Ra{{\mathfrak{Ra}}}
\def\Ca{{\mathfrak{Ca}}}
\def\set#1{\{#1\} }
\def\Ra{{\mathfrak{Ra}}}
\def\Nr{{\mathfrak{Nr}}}
\def\Tm{{\mathfrak{Tm}}}
\def\A{{\mathfrak{A}}}
\def\B{{\mathfrak{B}}}
\def\C{{\mathfrak{C}}}
\def\D{{\mathfrak{D}}}
\def\A{{\mathfrak{A}}}
\def\B{{\mathfrak{B}}}
\def\C{{\mathfrak{C}}}
\def\D{{\mathfrak{D}}}
\def\U{{\mathfrak{U}}}
\def\GG{{\mathfrak{GG}}}
\def\L{{\mathfrak{L}}}
\def\Rd{{\mathfrak{Rd}}}
\def\At{{\mathfrak{At}}}
\def\L{{\mathfrak{L}}}
\def\CA{{\bf CA}}
\def\RA{{\bf RA}}
\def\RCA{{\bf RCA}}
\def\G{{\bf G}}
\def\F{{\mathfrak{F}}}
\def\At{{\sf{At}}}
\def\N{\mathbb{N}}
\def\R{\mathfrak{R}}
\def\sub#1#2{{\sf s}^{#1}_{#2}}
\def\cyl#1{{\sf c}_{#1}}
\def\diag#1#2{{\sf d}_{#1#2}}
\def\c #1{{\cal #1}}
\def\pa{$\forall$}
\def\pe{$\exists$}
\def\ef{Ehren\-feucht--Fra\"\i ss\'e}
\def\nodes{{\sf nodes}}
\def\restr #1{{\restriction_{#1}}}
\def\Ra{{\mathfrak{Ra}}}
\def\Nr{{\mathfrak{Nr}}}
\def\Z{{\cal Z}}
\def\CA{{\bf CA}}
\def\RCA{{\bf RCA}}
\def\c#1{{\mathcal #1}}
\def\set#1{ \{#1\}}
\def\Ca{{\mathfrak Ca}}
\def\b#1{{\bar{ #1}}}
\def\pe{$\exists$}
\def\pa{$\forall$}
\def\Cm{{\mathfrak Cm}}
\def\Sg{{\mathfrak Sg}}
\def\Rl{{\mathfrak Rl}}
\def\N{{\cal N}}
\def\At{{\sf At}}
\def\rng{{\sf rng}}
\def\dom{{\sf dom}}
\def\Cm{{\sf Cm}}
\def\Mat{{\sf Mat}}
\def\w{{\sf w}}
\def\g{{\sf g}}
\def\y{{\sf y}}
\def\r{{\sf r}}
\def\cyl#1{{\sf c}_{#1}}
\def\sub#1#2{{\sf s}^{#1}_{#2}}
\def\diag#1#2{{\sf d}_{#1#2}}
\def\RQEA{{\sf RQEA}}
\def\ws{winning strategy}
\def\ef{Ehren\-feucht--Fra\"\i ss\'e}
 \def\CA{{\sf CA}}
\def\RCA{{\sf RCA}}
\def\RA{{\sf RA}}
\def\PA{{\sf PA}}
\def\PEA{\sf PEA}
\def\QEA{{\sf QEA}}
\def\y{{\sf y}}
\def\g{{\sf g}}
\def\r{{\sf r}}
\def\w{{\sf w}}
\def\Z{{\mathbb{Z}}}
\def\N{{\mathbb{N}}}
\def\U{{\mathfrak{U}}}
\def\c{{\sf c}}
\def\s{{\sf s}}
\def\d{ Dedekind--MacNeille}
\def\Id{{\sf Id}}
\def\Sc{{\sf Sc}}
\def\Df{{\sf Df}}
\def\K{{\sf K}}
\def\nodes{{\sf nodes}}
\def\G{{\bold G}}
\def\Sc{{\sf Sc}}
\def\Df{{\sf Df}}
\def\PA{{\sf PA}}
\def\Id{{\sf Id}}
\def\QEA{{\sf QEA}}
\def\s{{\sf s}}
\def\CA{{\sf CA}}
\def\K{{\sf K}}
\def\QA{{\sf QA}}
\def\RCA{{\sf RCA}}
\def\RQEA{{\sf RQEA}}
\def\A{{\mathfrak{A}}}
\def\cyl#1{{\sf c}_{#1}}
\def\sub#1#2{{\sf s}^{#1}_{#2}}
\def\diag#1#2{{\sf d}_{#1#2}}
\def\la#1{\langle#1\rangle}
\def\G{{\mathfrak{G}}}
\def\de{Dedekind-MacNeille}
\def\Cm{{\mathfrak{Cm}}}
\def\M{{\sf M}}
\title{Problems on neat embeddings solved by 
rainbow constructions and Monk algebras, a survey of results and methods}
\author{Tarek Sayed Ahmed\\
Department of Mathematics, Faculty of Science,\\
Cairo University, Giza, Egypt.
 }
\date{}
\begin{document}
\maketitle

\begin{myabstract} 
This paper is a survey of recent results and methods in (Tarskian) algebraic logic. 
We focus on cylindric algebras.  
Rainbow constructions are used to solve problems on classes consisting of algebras having a neat embedding property 
substantially generalizing seminal results of Hodkinson as well as Hirsch and 
Hodkinson on atom--canonicity and complete representations, respectively. For proving non--atom--canonicity of infinitely many 
varieties approximating the variety
of representable algebras of finite dimension $>2$, so called 
blow up and blur constructions are used. Rainbow constructions are compared to constructions using 
Monk--like algebras and cases where both constructions work are given.  
Notions of representability, like complete, weak and strong are lifted from atom structures to atomic algebras
and investigated in terms of neat embedding properties. The classical results of Monk and Maddux on non-finite axiomatizability of the classes of 
representable relation and cylindric algebras of finite dimension $>2$ are reproved using also a blow up and blur construction.
Applications to omitting types for $n$--variable fragments of 
first order logic, for $2<n<\omega$, are given. The main results of the paper are summarized in tabular form at the end of the paper.
\end{myabstract}

\section{Introduction}

\subsection{An overview}  

The purpose of this paper is to present recent developments from algebraic logic and logic
in an integrated format that is accessible to the non--specialist and informative for the practitioner.  
Using quite sophisticated (relatively recent) techniques from algebraic logic, 
(like so--called rainbow constructions and games), 
our intention is to unify, illuminate and generalize several existing results 
scattered in the literature, hopefully stimulating further research.
We focus on Tarskian algebraic logic, specifically cylindric algebras. 

We follow the notation of \cite{1} which is in conformity with the notation adopted in the monograph \cite{HMT2}.
In particular, $\CA_\alpha$ denotes the class of cylindric algebras of dimension $\alpha$, $\alpha$ an ordinal, $\sf RCA_\alpha$ denotes the class of
representable $\CA_\alpha$s and for any ordinal $\beta>\alpha$, ${\sf Nr}_\alpha\CA_\beta$ denotes the class of all $\alpha$--neat reducts of $\CA_\beta$s to be defined in a moment.
Three cornerstones in the development of the theory of cylindric algebras due to Tarski, Henkin and Monk, respectively - the last
two involving the notion of neat reducts - are
the following. 

Tarski proved that every locally finite infinite dimensional cylindric algebra of dimension $\omega$ $(\sf Lf_{\omega})$ is representable.
Henkin \cite[Theorem 3.2.10]{HMT2} proved what has become to be known as the 
{\it the neat embedding theorem}, which says that  for any ordinal $\alpha$, $\bold S{\sf Nr}_\alpha\CA_{\alpha+\omega}= \sf RCA_{\alpha}$, where $\bold S$ denotes the operation of forming subalgebras. 
This generalize's Tarski's representation theorem substantially, because $\sf Lf_{\omega}\subsetneq \bold S{\sf Nr}_{\omega}\CA_{\omega+\omega}$.
Monk \cite{Monk} proved that for any ordinal $\alpha>2$ and $k\in \omega$,  $\bold S{\sf Nr}_\alpha\CA_{\alpha+k}\neq \RCA_\alpha$. In particular,
for each finite $n>2$, and $k\in \omega$, there is an algebra $\A_k\in \bold S{\sf Nr}_n\CA_{n+k}$ that is not representable.
Any non--trivial ultraproduct of such algebras will be in $\sf RCA_n$.
Hence, the variety $\sf RCA_n$ ($2<n<\omega$) is  not finitely axiomatizable.

The notion of {\it neat reducts} and the related one of {\it neat embeddings} are both important in algebraic logic for the
very simple reason that they are very much tied
to the notion of representability, via the neat embedding theorem of Henkin's.

\begin{definition} 
Let  $\alpha<\beta$ be ordinals and $\B\in \CA_{\beta}$. Then the {\it $\alpha$--neat reduct} of $\B$, in symbols
$\Nr_{\alpha}\B$, is the
algebra obtained from $\B$, by discarding
cylindrifiers and diagonal elements whose indices are in $\beta\setminus \alpha$, and restricting the universe to
the set $Nr_{\alpha}B=\{x\in \B: \{i\in \beta: {\sf c}_ix\neq x\}\subseteq \alpha\}.$
\end{definition}
If $\A\in \CA_\alpha$ and $\A\subseteq \Nr_\alpha\B$, with $\B\in \CA_\beta$, then we say that $\A$ {\it neatly embed} in $\B$, and 
that $\B$ is a {\it $\beta$--dilation of $\A$}, or simply a {\it dilation} of $\A$ if $\beta$ is clear 
from context. We say that $\A$ has {\it a neat embedding property} and that $\A$ has {\bf the} neat embedding property if $\beta\geq \alpha+\omega.$
It is known that  $\A$ has the neat embedding property $\iff \A\in \bold S{\sf Nr}_{\alpha}\CA_{\alpha+\omega}$ \cite[Theorem 2.6.35]{HMT1} 
and by Henkin's neat embedding  theorem both are equivalent to $\A\in \RCA_{\alpha}$.

Monk's result was refined by Hirsch and Hodkinson by showing that for finite $n\geq 3$ and $k\geq 1$, the
variety $\bold S{\sf Nr}_n\CA_{n+ k+1}$ is not even finitely axiomatizable over $\bold S{\sf Nr}_n\CA_{n+k}$ answering (the famous) 
\cite[Problem 2.12]{HMT2}.
This result was lifted to infinite dimensions
by Robin Hirsch and the present author
\cite{t}, addressing other cylindric--like algebras, as well, like Pinter's substitution algebras and Halmos' quasi--polyadic algebras.
Such results will be addressed in some detail in \S6 and will be strengthened for cylindric algebras and quasipolyadic equality algebras of infinite dimensions.
It is known that for $1<n<\beta$, $n$ finite,  the class ${\sf Nr}_n\CA_\beta$ is not first order definable, least a variety \cite[Theorem 5.1.4]{Sayedneat}; this solves 
\cite[Problem 4.4]{HMT2}. We show in theorem \ref{rainbow} that it is not even closed under $\equiv_{\infty, \omega}$,
but that it is pseudo--elementary, and that its elementary theory is recursively enumerable.

From now on fix $2<n<\omega$. Analogous to the aforementioned results, in what follows we prove several results on classes of algebras having a neat embedding property
in connection to the algebraic notion of {\it atom--canonicity} and the semantical one of {\it complete represenations}. 
A variety $V\subseteq \CA_n$ is {\it atom--canonical} if whenever $\A\in V$ is atomic, 
then the complex algebra of its atom structure, in symbols  $\Cm\At\A$, is also in $V$.
In this case $\Cm\At\A$ is the {\it \de\ completion} of $\A$ which is the smallest complete algebra 
containing $\A$ as a dense subalgebra meaning that for all non--zero $b\in \Cm\At\A$, there is a non--zero $a\in A$ 
such that $a\leq b$.  

As the name suggests, complete representability is a {\it semantical} notion. A {\it representation} of $\A\in \CA_n$ is an injective homomorphism
$f:\A\to \wp(V)$, where $V\subseteq {}^nU$ for some non--empty set $U$ is a disjoint union of cartesian squares, 
that is $V=\bigcup_{i\in I}{}^nU_i$, $I$ is a non-empty indexing set, $U_i\neq \emptyset$  
and  $U_i\cap U_j=\emptyset$  for all $i\neq j$; the operations on $\wp(V)$ are the concrete operations 
defined like in cylindric set algebras of dimension $n$ relativized to $V$. A cylindric set algebra having top element such a $V$ is called a {\it generalized set algebra} of dimension $n$.
  
\begin{definition}\label{completerep} A {\it complete representation} of $\A\in \CA_n$ is a representation $f$ of $\A$ that preserves  arbitrary sums carrying
them to set--theoretic unions, that is the representation $f:\A\to \wp(V)$ is required to satisfy   
$f(\sum S)=\bigcup_{s\in S}f(s)$ for all $S\subseteq \A$ such that $\sum S$ exists. 
\end{definition}
We denote the class of algebras having a complete represenations (briefly the class of completely representable algebras) by ${\sf CRCA}_n.$
Ordinary representations are not necessarily complete. It is known that if an algebra has
a complete representation, then it has to be atomic \cite{HH}. 
It is also known that there are countable atomic $\RCA_n$s 
that have no complete representations. So atomicity is necessary but not sufficient for complete 
representability.
The class ${\sf CRCA}_n$s is not even elementary \cite[Corollary 3.7.1]{HHbook2}, this can be distilled from the proof of the first item of theorem \ref{rainbow}.
Nevertheless algebras having countably many atoms in ${\sf CRCA}_n$ {\it can be characterized} via special neat embeddings.
To specify such `special' neat embeddings we need: 

Let $\bold K$ be a class of algebras having a Boolean reduct. Then $\bold S_d\bold K$ denotes the class of dense sublgebras of algebras in $\bold K$
and $\bold S_c\bold K$ denotes the class of complete subalgebras of algebras in $\bold K$. For $\A, \B\in \bold K$, we 
write  $\A\subseteq_d \B$ if $\A$ is {\it dense} in $\B$. 
$\A$ is a {\it complete subalgebra} of $\B$, in symbols $\A\subseteq_c \B$,  if $\A\subseteq \B$ and for all $X\subseteq \A$, $\sum ^{\A}X=1\implies \sum^{\B}X=1$. 
We have $\A\subseteq_d \B\implies \A\subseteq_c \B$, but the converse implication $\Longleftarrow$ is not true in general (it is not true for Boolean algebras).

It is known that if $\A\in \CA_n$ has countably many atoms then $\A\in {\sf CRCA}_n$ $\iff \A\in \bold S_c{\sf Nr}_n\CA_{\omega}$ \cite[Theorem 5.3.6]{Sayedneat} and theorem \ref{ch}.
It is also known that ${\sf Nr}_n\CA_{\omega}\subsetneq  \bold S_d{\sf Nr}_n\CA_{\omega}\subseteq  \bold S_c{\sf Nr}_n\CA_{\omega}$ \cite{SL}.
The last inclusion is proved to be proper below for $n=3$, cf. corollary \ref{finitepa3}.
In particular, if $\A\in \CA_n$ is atomic with countably many atoms and $\A\in \bold S_d{\sf Nr}_n\CA_{\omega}$, then $\A\in {\sf CRCA}_n$.
Below we show that the countability condition cannot be omitted. There is an atomic $\A\in \CA_n$ having uncountably many atoms such that $\A\in {\sf Nr}_n\CA_{\omega}$, 
but $\A\notin {\sf CRCA}_n$. But it can be easily shown (as done below) 
that such an algebra belongs to the elementary closure of the class ${\sf CRCA}_n$ 
re--establishing that ${\sf CRCA}_n$ 
is not elementary. In fact, it will be shown that any atomic algebra in 
${\sf Nr}_n\CA_{\omega}$ satisfies the so--called {\it Lyndon conditions}, which are an infinite set of first order sentences $\rho_k$ $(k\in \omega)$; each
$\rho_k$  encodes a winning strategy in a $k$--rounded game denoted by $G_k$ to be addressed in a moment. It is known, at least implicitly, that this 
last elementary class, namely, ${\sf Mod}\{\rho_k: k\in \omega\}$  
coincides with the elementary closure of ${\sf CRCA}_n$, cf. theorem \ref{main2}.   

Such a semantical notion (of complete representability) is also closely
related to the algebraic notion of {\it atom--canonicity} of $\RCA_n$ which is an important {\it persistence property} in modal logic,
and to the metalogical property of  omitting types in finite variable fragments of first order logic
\cite[Theorems 3.1.1-2, p.211, Theorems 3.2.8, 9,10]{Sayed}, 
when non--principal types are omitted with respect to usual Tarskian semantics.
The typical question is: given a $\A\in \CA_n$ and a family $(X_i: i\in I)$ of meets ($I$ a non--empty set), 
is there a representation $f:\A\to \wp(V)$
that carries  this set of meets to set theoretic intersections, in the sense that  $f(\prod X_i)=\bigcap_{i\in I}f(x)$ for all $i\in I$? 

When the algebra $\A$ is countable, $|I|\leq \omega$ and $\prod X_i=0$ for all $\in I$, this is an algebraic version of an  omitting types theorem; 
the representation $f$ {\it omits the given set of meets (or non-principal types).}  
When it is {\it only one meet} consisting of co-atoms, in an atomic algebra, such a representation $f$ will 
a {\it complete one}, and 
this is equivalent to that $f(\prod X)=\bigcap_{x\in X}f(x)$ for all $X\subseteq \A$ 
for which $\prod X$ exists in $\A$ \cite{HH}. The last condition is an algebraic version 
of of Vaught's theorem for first order logic, namely, the unique (up to isomorphism) atomic, 
equivalently prime,  model of a countable 
atomic theory omits all 
non--principal types.

These connections will be further elaborated upon below in $\S7$. It will be shown that the (seemingly purely algebraic)
 result of non--atom canonicity of $\bold S{\sf Nr}_n\CA_{n+k}$ for $k\geq 3$, proved in theorem \ref{can} 
implies failure of the omitting types theorems for $n$--variable 
fragments of first order logic  $(2<n<\omega)$, even if we substantially broaden the class of models omitting a given family of 
non--principal types, considering so--called $n+3$--flat models, 
in place of ordinary models 
which are $\omega$--flat. In fact, we prove more. We will show that {\it Vaught's theorem} (which is a consequence of the omitting types theorem in first order logic) 
fails in the (strong) 
sense that there is an atomic 
$L_n$ theory, such that the non--principal type of co--atoms, 
cannot be omitted in an $n+3$--flat model, {\it a fortiori} it 
cannot be omitted in an ordinary 
($\omega$--flat) model. 

The chapter \cite{HHbook2} is devoted to studying various types of atom structures like
{\it completely representable} atom structures, atom structures satisfying the {\it Lyndon conditions}, the 
{\it strongly representable} atom structures, and {\it weakly representable} atom structures, 
all of dimension $n$. 
Now one can lift such notions from working on {\it atom structures (the frame level)} to working 
on the {\it (complex) algebra level} restricting his attension to atomic ones, and investigate 
such notions of represenatbility in term of neat embedding properties. 
We initiate this task in theorem \ref{main2}, which is likely to be rewarding, 
but by no means do we end it.

\subsection{On the techniques used}

We continue to fix finite $n>2$. 
We frequently use games as devised by Hirsch and Hodkinson \cite{HHbook, HHbook2} played on so--called {\it atomic networks} on a cylindric (rainbow) atom structure (to be defined below).
The $k$--rounded `usual' atomic game ($k\leq \omega$) played on an atomic $\A\in \CA_n$ between \pa\ and \pe\ is denoted by $G_k(\At\A)$ \cite[Definition 3.3.2]{HHbook2}.  
We devise `truncated versions' $F^m$, $G^m_{\omega}$
of the above games. These games have $\omega$ rounds, but the number of nodes in networks used during the play is limited to $m$ where $2<n<m$.
$F^m$ is like $G^m_{\omega}$ except that $F^m$, \pa\ has the bonus to reuse the $m$ nodes in play. When $m\geq \omega$, these games reduce to the usual
$\omega$--rounded atomic game $G_{\omega}$, definition \ref{game}. 
Then the game $F^m$ {\it is related to the existence of $m$--dilations for an algebra $\A\in \CA_n$} in the following sense.
Assume that $2<n<m$.

(*) If $\A\in \bold S_c{\sf Nr}_n\CA_m\implies$ \pe\ has a \ws\ in $F^m(\At\A)$.
If $\A$ is finite and  $\A\in \bold S{\sf Nr}_n\CA_m\implies$ \pe\ has a \ws\ in $F^m(\At\A)$, cf. lemma \ref{n}.

As a sample of the hitherto obtained results:   

(1) A finite algebra $\D\in \CA_n$ for which \pe\ can win $F^{n+3}(\At\D)$,   
so that by (the contrapositive of the second part of) (*) $\D\notin \bold S{\sf Nr}_n\CA_{n+3}$, can be embedded into the \de\ completion of an
atomic 
(infinite) countable $\A\in \RCA_n$. 
From this, we conclude that the \d\ completion of $\A$, namely, $\Cm\At\A$, 
is outside $\bold S{\sf Nr}_n\CA_{n+3},$ 
because the last class is a variety hence closed under $\bold S$ and $\D\subseteq \Cm\At\A$.  

Since $\A\in \bold S{\sf Nr}_n\CA_{n+k}$ for all $k\geq 3$ and $\Cm\At\A\notin \bold S{\sf Nr}_n\CA_{n+3}(\supseteq \bold S{\sf Nr}_n\CA_{n+k}$, $k\geq 3$), 
we conclude that $\bold S{\sf Nr}_n\CA_{n+k}$  is not {\it atom--canonical} for all $k\geq 3$. This is proved in theorem \ref{can}.

(2) We construct, for any finite $n>2$,  an atomic  algebra $\C\in {\sf RCA}_n$
with countably many atoms, such that $\C=\Cm\At\C$, for  which \pa\ can win $F^{n+3}(\At\C)$  
but \pe\ can win $G_k(\At\C)$ for all finite $k$.
It  follows from (the contrapositive of the first part of) (*) that $\C\notin \bold S_c{\sf Nr}_n\CA_{n+3}$. 
Using ultrapowers followed by an elementary chain argument (a standard procedure in such constructions), we get that $\C$ is elementary
equivalent to a countable completely representable algebra $\B$ \cite[Corollary 3.3.5]{HHbook2}, 
so that $\B\in \bold S_c{\sf Nr}_n\CA_{\omega}$, cf. \cite[Theorem 5.3.6]{Sayedneat} and theorem \ref{ch}. 
We conclude that any class $\bold K$ such that $\bold S_c{\sf Nr}_n\CA_{\omega}\subseteq \bold K\subseteq \bold S_c{\sf Nr}_n\CA_{n+3}$, 
$\bold K$ is not elementary, because $\A\notin \bold K\subseteq \bold S_c{\sf Nr}_n\CA_{n+3}$, $\A\equiv \B$ and 
$\B\in \bold S_c{\sf Nr}_n\CA_{\omega}\subseteq \bold K$. This is proved in the first item of theorem \ref{rainbow}. 
In the third item of {\it op.cit} we replace the $\bold S_c$ on the left by $\bold S_d$.

To prove (1) and (2) we use {\it rainbow constructions for cylindric algebras} \cite{HH, HHbook2}.

Throughout the paper we use fairly standard notation, which as indicated above, is in conformity with the notation in \cite{1}. 
Any deviation from such notation will be explicitly mentioned and any possibly unfamiliar notation will be explained  at its first occurrence in the text.
We assume familiarity with only the (very) basics of cylindric algebra theory. In this respect the paper is fairly self--contained.

We make the following convention which we have adopted so far and will stick to it till the end of the paper. We denote infinite ordinals by $\alpha, \beta\ldots$ 
and finite ordinals by $n, m\ldots$. Ordinals which are arbitary meaning that they could be finite or infinite 
will be denoted by  $\alpha,\beta\ldots$. 
\subsection{Layout}
\begin{itemize}

\item In $\S3$ after the preliminaries, we show that for any $2<n<\omega$ and any $k\geq 3$, the variety $\bold S{\sf Nr}_n\CA_{n+k}$ is not atom--canonical.

\item In $\S4$ we show that several classes consisting of algebras having a neat embedding property are not first order definable. 
As a sample,  we show that
for any finite $n>1$, the class ${\sf Nr}_n\CA_{\omega}$ is not closed under $\equiv_{\infty, \omega}$ and 
that for any $2<n<\omega$, any class $\bold K$, such that $\bold S_d{\sf Nr}_n\CA_{\omega}\subseteq \bold K\subseteq \bold S_c{\sf Nr}_n\CA_{n+3}$, $\bold K$
is not elementary, cf. theorem \ref{rainbow}.

\item In \S5 we lift  various notions of representability formulated for atom structures to atomic algebras, and we investigate such notions in terms of neat embeddings, cf. theorem \ref{main2}.

\item In \S6 we compare rainbow algebras to Monk--like algebras, and we reprove Monk's celebrated result on non--finite axiomatizability for both 
representable relation and cylindric algebras of finite dimension $>2$, 
cf. example \ref{Monk}.  We strengthen the result in \cite{t} for $\CA$s of infinite dimension, cf. theorem \ref{2.12a} 
and we review the  main results in \cite{t} in connection to the famous neat embedding problem \cite[Problem 2.12]{HMT1} solved by Hirsch and Hodkinson.

\item In the last section, we reep the harvest of the algebraic result proved in theorem \ref{can}.
Together with variations on the flexible construction in \cite{ANT}, omitting types theorems for the clique guarded (finite variable) fragments 
of first order logic are investigated, cf. theorem \ref{2.12}. The results of Maddux on non--finite axiomatizability 
(for representable relation and cylindric algebras)  refining Monk's results are reproved. 
  \end{itemize}

\section{Preliminaries}

Algebras will be denoted by Gothic letters, and when we write $\A$ for an algebra, then we shall be tacitly assuming that 
$A$ denotes its universe, that is 
$\A=\langle A, f_i^{\A}\rangle_{i\in I}$ where $I$ is a non--empty set and $f_i$ $(i\in I)$ are the operations in the signature of $\A$ 
interpreted via $f_i^{\A}$ in $\A$. For better readability, we omit the superscript $\A$ and we write simply 
$\A=\langle A, f_i\rangle_{i\in I}$.

\subsection{Atom structures and complex algebras}

We recall the notions of {\it atom structures} and {\it complex algebra} in the framework of Boolean algebras 
with operators, briefly $\sf BAO$s, cf. \cite[Definition 2.62, 2.65]{HHbook}. 

\begin{definition}\label{definition}(\textbf{Atom Structure})
Let $\A=\langle A, +, -, 0, 1, \Omega_{i}:i\in I\rangle$ be
an atomic $\sf BAO$ with non--Boolean operators $\Omega_{i}:i\in I$. Let
the rank of $\Omega_{i}$ be $\rho_{i}$. The \textit{atom structure}
$\At\A$ of $\A$ is a relational structure
$$\langle At\A, R_{\Omega_{i}}:i\in I\rangle$$
where $At\A$ is the set of atoms of $\A$ 
and $R_{\Omega_{i}}$ is a $(\rho(i)+1)$-ary relation over
$At\A$ defined by
$$R_{\Omega_{i}}(a_{0},
\cdots, a_{\rho(i)})\Longleftrightarrow\Omega_{i}(a_{1}, \cdots,
a_{\rho(i)})\geq a_{0}.$$
\end{definition}
\begin{definition}(\textbf{Complex algebra})
Conversely, if we are given an arbitrary first order structure
$\mathcal{S}=\langle S, r_{i}:i\in I\rangle$ where $r_{i}$ is a
$(\rho(i)+1)$-ary relation over $S$, called an {\it atom structure}, we can define its
\textit{complex
algebra}
$$\Cm(\mathcal{S})=\langle \wp(S),
\cup, \setminus, \phi, S, \Omega_{i}\rangle_{i\in
I},$$
where $\wp(S)$ is the power set of $S$, and
$\Omega_{i}$ is the $\rho(i)$-ary operator defined
by$$\Omega_{i}(X_{1}, \cdots, X_{\rho(i)})=\{s\in
S:\exists s_{1}\in X_{1}\cdots\exists s_{\rho(i)}\in X_{\rho(i)},
r_{i}(s, s_{1}, \cdots, s_{\rho(i)})\},$$ for each
$X_{1}, \cdots, X_{\rho(i)}\in\wp(S)$.
\end{definition}
An atom structure will be denoted by $\bf At$.  An atom structure $\bf At$ has the signature of $\CA_\alpha$, $\alpha$ an ordinal, 
if  $\Cm\bf At$ has the signature of $\CA_\alpha$, in which case we say that $\bf At$ is an $\alpha$--dimensional atom structure. 

\begin{definition}\label{canonical} 
Let $V$ be a variety of $\sf CA_n$s. Then $V$ is {\it atom--canonical} if whenever $\A\in V$ and $\A$ is atomic, then $\mathfrak{Cm}\At\A\in V$.
The {\it  \d\ completion} of  $\A\in \CA_n$, is the unique (up to isomorphisms that fix $\A$ pointwise)  complete  
$\B\in \CA_n$ such that $\A\subseteq \B$ and $\A$ is {\it dense} in $\B$.
\end{definition}

If $\A\in \CA_n$ is atomic,  then 
$\mathfrak{Cm}\At\A$ 
is the {\it \d\ completion of $\A$.}
If $\A\in \CA_n$, then its atom structure will be denoted by $\At\A$ with domain the set of atoms of $\A$ denoted by $At\A$.
We deal only with atom structure having the signature of $\CA_n$.
{\it Non atom--canonicity} can be proved 
by finding {\it weakly representable atom structures} that are not {\it strongly representable}.
\begin{definition} Let $\alpha$ be an ordinal. An atom structure $\bf At$ of dimension $\alpha$ is {\it weakly representable} if there is an atomic 
$\A\in \RCA_{\alpha}$ such that $\At\A=\bf At$.  The atom structure  $\bf At$ is {\it strongly representable} if for all $\A\in \CA_{\alpha}$, 
$\At\A=\bf At \implies \A\in {\sf RCA}_{\alpha}$.
\end{definition}
Fix $2<n<\omega$. Then these two notions (strong and weak representability) do not coincide for cylindric algebras as proved by Hodkinson \cite{Hodkinson}. 
This gives that $\RCA_n$ is {\it not} atom--canonical
and  that $\RCA_n$ is not closed under \d\ completions. 
In theorem \ref{can}, we generalize Hodkinson's result by showing that there are two atomic $\CA_n$s sharing the same atom structure, one is representable and the other is even outside 
$\bold S{\sf N}r_n\CA_{n+3}(\supsetneq \RCA_n$). In particular, there is a $\CA_n$ outside $\bold S{\sf Nr}_n\CA_{n+3}$ having a dense representable 
subalgebra.

\subsection{Atomic games}

We need  the notions of {\it atomic networks} and {\it atomic games} \cite{HHbook, HHbook2}:

\begin{definition}\label{game} Fix finite $n>1$. 

(1) An {\it $n$--dimensional atomic network} on an atomic algebra $\A\in \CA_n$  is a map $N: {}^n\Delta\to  \At\A$, where
$\Delta$ is a non--empty set of {\it nodes}, denoted by $\nodes(N)$, satisfying the following consistency conditions: 
\begin{itemize}
\item If $x\in {}^n\nodes(N)$, and $i<j<n$, then $N(x)\leq {\sf d}_{ij}\iff x_i=x_j$.
\item If $x, y\in {}^n\nodes(N)$, $i<n$ and $x\equiv_i y$, then  $N(x)\leq {\sf c}_iN(y)$.
\end{itemize}
Let $i<n$. For $n$--ary sequences $x$ and $y$ and $n$--dimensional atomic networks $M$ and $N$,  we write $x\equiv_ iy $ $\iff y(j)=x(j)$ for all $j\neq i$
and we write $M\equiv_i N\iff M(y)=N(y)$ for all $y\in {}^{n}(n\setminus \{i\})$.

(2)   Assume that $\A\in \CA_n$ is  atomic and that $m, k\leq \omega$. 
The {\it atomic game $G^m_k(\At\A)$, or simply $G^m_k$}, is the game played on atomic networks
of $\A$
using $m$ nodes and having $k$ rounds \cite[Definition 3.3.2]{HHbook2}, where
\pa\ is offered only one move, namely, {\it a cylindrifier move}: 
\begin{itemize}
\item Suppose that we are at round $t>0$. Then \pa\ picks a previously played network $N_t$ $(\nodes(N_t)\subseteq m$), 
$i<n,$ $a\in \At\A$, $x\in {}^n\nodes(N_t)$, such that $N_t(x)\leq {\sf c}_ia$. For her response, \pe\ has to deliver a network $M$
such that $\nodes(M)\subseteq m$,  $M\equiv _i N$, and there is $y\in {}^n\nodes(M)$
that satisfies $y\equiv _i x$, and $M(y)=a$.  
\end{itemize}
(3)  We write $G_k(\At\A)$, or simply $G_k$, for $G_k^m(\At\A)$ if $m\geq \omega$.
The {\it atomic game $F^m(\At\A)$, or simply $F^m$}, is like $G^m_{\omega}(\At\A)$ except that
\pa\ has the advantage to reuse the available
$n$ nodes during the play.
\end{definition}
Now we approach the notion of complete representations as defined in \ref{completerep}. 
It is known \cite{HH} that $f:\A\to \B$ is a complete representation of $\A$ $\iff$ $\A$ is atomic  
and $f$ is {\it atomic},  in the sense that $\bigcup_{x\in \At\A}f(x)=1^{\B}$.
For $n<\omega$ (recall that) we denote the class of completely representable $\CA_{n}$s by ${\sf CRCA}_{n}$.

\begin{theorem}\label{rep} Let $2<n<\omega$ and $\A\in \CA_n$ be atomic with countable many atoms. 
Then \pe\ has a \ws\ in $G_k(\At\A)$ for all $k\in \omega\iff \A\equiv \B$ for some 
$\B\in {\sf CRCA}_n$.  \pe\ has a \ws\ in  $G_{\omega}(\At\A)\iff \A$ is completely representable. In particular, if $\A$ is finite, then 
\pe\ has a \ws\ in $G_{\omega}(\At\A)\iff \A$ 
is representable $\iff$ \pe\ has a \ws\ in $G_k(\At\A)$ 
for all $k\in \omega$.
\end{theorem}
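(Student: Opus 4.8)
The plan is to prove the central equivalence, that $\pe$ has a \ws\ in $G_\omega(\At\A)$ iff $\A$ is completely representable, first, and then to obtain the two outer statements by model--theoretic transfer, using that for each finite $k$ the assertion ``$\pe$ has a \ws\ in $G_k(\At\A)$'' is captured by a single first--order sentence $\rho_k$ (a \lc) in the finite signature of $\CA_n$.

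For the central equivalence, the easy direction is from a complete representation $h:\A\to \wp(V)$ to a \ws\ in $G_\omega$: $\pe$ maintains an atomic network $N$ whose nodes name elements of $V$, with $N(\bar x)$ the unique atom below which the named tuple lies (unique and well defined since $h$ is atomic, so the images of distinct atoms are disjoint and cover $V$). Confronted with a cylindrifier move $N(\bar x)\le \cyl i a$, $\pe$ uses that $h$ preserves $\cyl i$: the tuple named by $\bar x$ lies in $h(\cyl i a)=\cyl i^{\wp(V)}h(a)$, so some $i$--variant of it lies in $h(a)$, and $\pe$ plays a node naming that variant. This needs no countability. Conversely, given a \ws\ for $\pe$ in $G_\omega(\At\A)$, I would use that $\A$ has only countably many atoms to schedule $\pa$'s challenges fairly over the $\omega$ rounds, so that every cylindrifier demand $N_t(\bar x)\le \cyl i a$ and every atom that must be witnessed is eventually played; only countably many such demands arise, since both the set of nodes and $\At\A$ are countable. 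The nested networks then have a limit $N=\bigcup_t N_t$ on a countable node set $U$, and $h(a)=\{\bar x\in {}^nU: N(\bar x)\le a\}$ is checked to be a $\CA_n$--homomorphism into a set algebra with unit ${}^nU$; it is injective (each atom was witnessed) and atomic (as $N$ is total into $\At\A$), i.e.\ a complete representation.

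For the first biconditional, the direction $\A\equiv \B\in {\sf CRCA}_n\Rightarrow \pe$ wins every $G_k(\At\A)$ is immediate: by the central equivalence $\pe$ wins $G_\omega(\At\B)$, hence each $G_k(\At\B)$, so $\B\models \rho_k$; as $\A\equiv \B$ and $\rho_k$ is first order, $\A\models \rho_k$ and $\pe$ wins $G_k(\At\A)$. For the converse, suppose $\pe$ wins $G_k(\At\A)$ for every finite $k$. I would pass to a countable elementary extension $\A\preceq \B$ in which the relevant game types are realized, e.g.\ by forming a non--principal ultrapower and taking a countable elementary subalgebra by the \ls\ theorem, or by building an elementary chain of countable extensions that successively realize all pending cylindrifier--demand types. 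These types are finitely satisfiable precisely because $\pe$'s strategies in the finite games $G_k$ witness the consistency of every finite fragment of a demand; realizing them lets $\pe$ amalgamate the finite strategies into a single \ws\ in $G_\omega(\At\B)$. Since $\B$ is countable, the central equivalence yields $\B\in {\sf CRCA}_n$ with $\A\equiv\B$, as required.

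Finally, for finite $\A$ I would use that, $n$ being finite, the signature of $\CA_n$ is finite, so a finite algebra is determined up to isomorphism by its first--order theory; thus $\A\equiv\B$ forces $\A\cong\B$. Combining this with the first biconditional, $\pe$ wins every $G_k(\At\A)$ iff $\A$ is isomorphic to a completely representable algebra, i.e.\ iff $\A\in {\sf CRCA}_n$, which by the central equivalence holds iff $\pe$ wins $G_\omega(\At\A)$. It remains to identify complete representability of a finite atomic algebra with mere representability: since $A$ is finite, every subset sum is a finite join, and any homomorphism preserves finite joins, so any representation of $\A$ is automatically complete. This closes the chain of equivalences. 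I expect the only genuinely delicate step to be the converse of the first biconditional: establishing that the finite winning strategies glue into an $\omega$--round strategy in a \emph{countable} elementary extension, which hinges on the finite satisfiability of the demand types and on realizing them while staying countable and elementarily equivalent to $\A$; the remaining directions are a routine step--by--step construction together with first--order transfer.
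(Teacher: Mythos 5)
Your overall architecture is the standard one — indeed it is essentially the proof of \cite[Theorem 3.3.3]{HHbook2} that the paper cites without reproving, and your ultrapower/elementary-chain treatment of the first biconditional is exactly the argument the paper itself rehearses in the proof of theorem \ref{rainbow}(1). The finite case (finite signature forces $\A\equiv\B\Rightarrow\A\cong\B$; all sums in a finite algebra are finite joins, so every representation is complete) is also correct. But there is one step that, as written, fails: in the converse of your central equivalence you build a \emph{single} limit network $N$ on a node set $U$ and define $h(a)=\{\bar x\in {}^nU: N(\bar x)\le a\}$ into a set algebra with square unit ${}^nU$, claiming injectivity because ``each atom was witnessed'' by fair scheduling. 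No scheduling of \pa's moves can achieve this in general: a cylindrifier move requires $N_t(\bar x)\le \cyl i a$, so \pa\ can only force atoms consistent with the network already built. If $\A=\A_1\times\A_2$ is a non-trivial product, atoms of the form $(a,0)$ and $(0,b)$ can never cohabit one network — any two $n$-tuples of nodes are linked by a chain of $\equiv_i$ steps, and the network conditions would force $(a,0)\le {\sf c}_{i_1}\cdots{\sf c}_{i_k}(0,b)=0$. Worse, the target is wrong in principle: any subalgebra of $\wp({}^nU)$ satisfies ${\sf c}_0\cdots{\sf c}_{n-1}x=1$ for $x\neq 0$, hence is simple, so no injective homomorphism of a non-simple $\A$ into a square-unit set algebra exists at all.

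The repair is standard and is precisely why the paper defines representations on generalized units $V=\bigcup_{i\in I}{}^nU_i$ (disjoint unions of cartesian squares): run one play of $G_\omega$ for \emph{each} atom $a\in\At\A$, with \pa's initial move $a$, schedule the (countably many, by countability of $\At\A$) demands fairly within each play, and let $N_a$ be the limit network on node set $U_a$. Then set $V=\bigcup_{a\in\At\A}{}^nU_a$ (the $U_a$ pairwise disjoint) and $h(x)=\{\bar s\in V: \exists a,\ N_a(\bar s)\le x\}$; injectivity now holds because every atom $a$ is witnessed in its own component $N_a$, and atomicity of $h$ holds since each $N_a$ is total into $\At\A$. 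Your easy direction is unaffected (there the whole play stays inside one square component of $V$, since for $n>2$ two tuples of $V$ agreeing off one coordinate lie in the same square). With this correction your proof goes through and coincides with the cited route; the remaining vagueness in the saturation/elementary-chain step (you must take the countable elementary subalgebra closed under \pe's strategy responses, not merely a L\"owenheim--Skolem subalgebra) is harmless, as you yourself indicate the chain construction that achieves it.
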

\begin{proof} \cite[Theorem 3.3.3]{HHbook2}. 
\end{proof} 
It is known that for any finite $n$, the class ${\sf CRCA}_n$ coincides with the class 
$\bold S_c{\sf Nr}_n\CA_{\omega}$,  
(where recall that $\bold S_c$ denotes the class of forming complete subalgebras) on algebras having countably many atoms \cite[Theorem 5.3.6]{Sayedneat}, cf. theorem \ref{ch}
and the corollary following it. 
A truncated version of
theorem \ref{rep} is the following lemma. To prove it we need a technical definition.
\begin{definition}\label{sub} Let $m$ be a finite ordinal $>0$. An $\sf s$ word is a finite string of substitutions $({\sf s}_i^j)$ $(i, j<m)$,
a $\sf c$ word is a finite string of cylindrifications $({\sf c}_i), i<m$;
an $\sf sc$ word $w$, is a finite string of both, namely, of substitutions and cylindrifications.
An $\sf sc$ word
induces a partial map $\hat{w}:m\to m$:
\begin{itemize}

\item $\hat{\epsilon}=Id,$

\item $\widehat{w_j^i}=\hat{w}\circ [i|j],$

\item $\widehat{w{\sf c}_i}= \hat{w}\upharpoonright(m\smallsetminus \{i\}).$

\end{itemize}
If $\bar a\in {}^{<m-1}m$, we write ${\sf s}_{\bar a}$, or
${\sf s}_{a_0\ldots a_{k-1}}$, where $k=|\bar a|$,
for an  arbitrary chosen $\sf sc$ word $w$
such that $\hat{w}=\bar a.$
Such a $w$  exists by \cite[Definition~5.23 ~Lemma 13.29]{HHbook}.
\end{definition}

\begin{lemma}\label{n} Assume that $2<n<m<\omega$. If $\A$ is atomic and 
$\A\in \bold S_c{\sf Nr}_n\CA_m$, then \pe\ has a \ws\ in $F^m(\At\A)$. 
In particular, if $\A\in {\sf Nr}_n\CA_{\omega}$, 
then \pe\ has a \ws\ in $F^{\omega}(\At\A)$ and $G_{\omega}(\At\A)$, and if 
$\A$ is finite and \pa\ has a \ws\ in $F^m(\At\A)$, 
then $\A\notin \bold S{\sf Nr}_n\CA_{m}$. 
\end{lemma}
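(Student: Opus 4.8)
The statement has three parts, and the natural strategy is to prove the main implication ($\A \in \bold S_c{\sf Nr}_n\CA_m \Rightarrow$ \pe\ wins $F^m$) first, then derive the two special cases as corollaries. The plan is to exhibit an explicit winning strategy for \pe\ in $F^m(\At\A)$ using the fact that $\A$ sits as a complete subalgebra of $\Nr_n\C$ for some $\C \in \CA_m$. The governing idea is that the extra $m-n$ dimensions of the dilation $\C$ provide exactly enough "room" to label the at most $m$ nodes of a network coherently: each atomic network $N$ on $m$ nodes should be maintained together with a single element of $\C$ (an atom, or a nonzero element below the relevant cylindrifications) that simultaneously witnesses all the consistency constraints recorded by $N$. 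Concretely, I would have \pe\ maintain as an inductive hypothesis that after each round there is a nonzero element $b \in \C$ such that for every tuple $x \in {}^n\nodes(N)$ the label $N(x)$ is recovered from $b$ by the appropriate substitution operator ${\sf s}_{\bar a}$ (the $\sf sc$-words of Definition~\ref{sub}), where $\bar a$ encodes the node-assignment $x$.

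\textbf{Key steps, in order.} First I would fix the neat embedding data: write $\A \subseteq_c \Nr_n\C$ with $\C \in \CA_m$, and recall that because the embedding is \emph{complete}, suprema computed in $\A$ that equal $1$ are still suprema in $\C$ — this is what lets \pe\ respond to \pa's cylindrifier moves without "losing mass". Second, I would set up the invariant described above, phrased as: the network $N_t$ currently in play is induced by a nonzero $b_t \in \C$ via the substitution operators indexed by the (at most $m$) nodes. Third, for \pa's cylindrifier move at round $t$ — \pa\ plays $N_t$, an index $i<n$, an atom $a$, and a tuple $x$ with $N_t(x) \le {\sf c}_i a$ — I would use the invariant to pull this back into $\C$: since $N_t(x)$ is witnessed by $b_t$, the inequality $N_t(x) \le {\sf c}_i a$ lifts to a nonzero element of $\C$ below ${\sf c}_j b_t$ for a suitable spare dimension $j$ (here $n \le j < m$ is the node \pe\ assigns to the new point $y$, and this is precisely why the game is capped at $m$ nodes). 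Cylindrifying and re-substituting in $\C$ yields a new nonzero witness $b_{t+1}$; reading off its labels defines \pe's response network $M$ with $\nodes(M) \subseteq m$, $M \equiv_i N_t$, and $M(y) = a$. Fourth, I would verify that $M$ genuinely satisfies the two network consistency conditions (the ${\sf d}_{ij}$ condition and the ${\sf c}_i$ condition of Definition~\ref{game}), which follows from the $\CA_m$-axioms applied to $b_{t+1}$ in $\C$. Finally, for the special cases: taking $m \ge \omega$ collapses $F^m$ to $G_\omega$ and $F^\omega$, and $\Nr_n\CA_\omega \subseteq \bold S_c\Nr_n\CA_\omega$ gives the first "in particular"; for the finite contrapositive, a finite $\A$ with $\A \in \bold S\Nr_n\CA_m$ is automatically in $\bold S_c\Nr_n\CA_m$ (on a finite algebra every subalgebra is complete), so \pe\ would win $F^m$, contradicting \pa's assumed winning strategy.

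\textbf{Main obstacle.} The delicate point is the re-labelling bookkeeping in the third and fourth steps: translating back and forth between node-tuples $x \in {}^n\nodes(N)$ and the $\sf sc$-words $\hat{w} = \bar a$ that act on the dilation, and checking that cylindrifying in a \emph{spare} dimension $j$ with $n \le j < m$ really does produce a network on $\le m$ nodes whose restriction agrees with $N_t$ off the $i$-th coordinate. I expect that the $m$-node budget is exactly consumed here, so the argument must track carefully that no move forces more than $m$ distinct nodes; this is where completeness of the subalgebra (rather than mere subalgebra) is essential, since \pa\ may reuse nodes in $F^m$ and \pe\ must keep finding nonzero witnesses in $\C$ round after round without the suprema degenerating. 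A secondary subtlety is ensuring the chosen witness $b_{t+1}$ can be taken to be an \emph{atom} (or at least consistently refined) so that the labels $M(y)$ are genuinely atoms of $\A$; atomicity of $\A$ together with completeness of the embedding is what guarantees this refinement is available.
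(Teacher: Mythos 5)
Your proposal is correct and follows essentially the same route as the paper's proof: the invariant you maintain --- a nonzero witness $b_t\in\C$ encoding the current network via substitutions --- is precisely the paper's element $N^+=\prod_{i_0,\ldots,i_{n-1}\in\nodes(N)}{\sf s}_{i_0,\ldots,i_{n-1}}N(i_0,\ldots,i_{n-1})$ kept nonzero throughout the play, and the paper likewise uses atomicity together with completeness of the embedding (so that $\sum\At\A=1$ survives into the dilation, where the completely additive substitutions spread it over every tuple of the $m$ node--dimensions) to refine the witness to atoms when building \pe's response. The only divergence is immaterial: for the finite case you observe that any embedding of a finite algebra is automatically complete, whereas the paper routes through the canonical extension ($\A\in\bold S{\sf Nr}_n\CA_m\implies\A^+\in\bold S_c{\sf Nr}_n\CA_m$, with $\A=\A^+$ for finite $\A$) --- both are one-line justifications of the same point.
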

\begin{proof} The proof  lifts the ideas
in \cite[Lemma 29, 26, 27]{r} formulated for relation algebras to $\CA$s.
This is tedious but not too hard. We give (more than) an  outline.
Fix $2<n<m$. Assume that $\C\in\CA_m$, $\A\subseteq_c\Nr_n\C$ is an
atomic $\CA_n$ and $N$ is an $\A$--network with $\nodes(N)\subseteq m$. Define
$N^+\in\C$ by
\[N^+ =
 \prod_{i_0,\ldots, i_{n-1}\in\nodes(N)}{\sf s}_{i_0, \ldots, i_{n-1}}{}N(i_0,\ldots, i_{n-1}).\]
Here the substitution operator is defined as in definition \ref{sub}.
For a network $N$ and  function $\theta$,  the network
$N\theta$ is the complete labelled graph with nodes
$\theta^{-1}(\nodes(N))=\set{x\in\dom(\theta):\theta(x)\in\nodes(N)}$,
and labelling defined by
$$(N\theta)(i_0,\ldots, i_{n-1}) = N(\theta(i_0), \theta(i_1), \ldots,  \theta(i_{n-1})),$$
for $i_0, \ldots, i_{n-1}\in\theta^{-1}(\nodes(N))$.
The following can be proved:

(1) for all $x\in\C\setminus\set0$ and all $i_0, \ldots, i_{n-1} < m$, there is $a\in\At\A$, such that
${\sf s}_{i_0,\ldots, i_{n-1}}a\;.\; x\neq 0$,
Then using (1) one can show:

(2) for any $x\in\C\setminus\set0$ and any
finite set $I\subseteq m$, there is a network $N$ such that
$\nodes(N)=I$ and $x\cdot N^+\neq 0$. Furthermore, for any networks $M, N$ if
$M^+\cdot N^+\neq 0$, then
$M\restr {\nodes(M)\cap\nodes(N)}=N\restr {\nodes(M)\cap\nodes(N)},$

(3) if $\theta$ is any partial, finite map $m\to m$
and if $\nodes(N)$ is a proper subset of $m$,
then $N^+\neq 0\rightarrow {(N\theta)^+}\neq 0$. If $i\not\in\nodes(N),$ then ${\sf c}_iN^+=N^+$.

Using the above proven facts,  one shows that \pe\  has a \ws\ in $F^m$; she can always
play a network $N$ with $\nodes(N)\subseteq m,$ such that
$N^+\neq 0$.
In the initial round, let \pa\ play $a\in \At\A$.
\pe\ plays a network $N$ with $N(0, \ldots, n-1)=a$. Then $N^+=a\neq 0$.
Recall that here \pa\ is offered only one (cylindrifier) move.
At a later stage, suppose \pa\ plays the cylindrifier move, which we denote by
$(N, \langle f_0, \ldots, f_{n-2}\rangle, k, b, l).$
He picks a previously played network $N$,  $f_i\in \nodes(N), \;l<n,  k\notin \{f_i: i<n-2\}$,
such that $b\leq {\sf c}_l N(f_0,\ldots,  f_{i-1}, x, f_{i+1}, \ldots, f_{n-2})$ and $N^+\neq 0$.
Let $\bar a=\langle f_0\ldots f_{i-1}, k, f_{i+1}, \ldots f_{n-2}\rangle.$
Then by  second part of  (3)  we have that ${\sf c}_lN^+\cdot {\sf s}_{\bar a}b\neq 0$
and so  by first part of (2), there is a network  $M$ such that
$M^+\cdot{\sf c}_{l}N^+\cdot {\sf s}_{\bar a}b\neq 0$.
Hence $M(f_0,\dots, f_{i-1}, k, f_{i-2}, \ldots$ $, f_{n-2})=b$,
$\nodes(M)=\nodes(N)\cup\set k$, and $M^+\neq 0$, so this property is maintained.

The last part follows by observing that for any $\C\in \CA_n$, if $\C\in \bold S{\sf Nr}_n\CA_m\implies \C^+\in \bold S_c{\sf Nr}_n\CA_m$
(where $\C^+$ is the canonical extension of $\C$) and if  $\C$ is finite,  then of course $\C=\C^+$.

\end{proof}

\subsection{Rainbow constructions}

Rainbow constructions involve `colours' as the name suggests, and 
quite sophisticated machinery from 
finite combinatorics and graph theory. For $\CA_n$s $(2<n<\omega$), the rainbow atom structure consists of certain {\it coloured graphs}. 
Here the atoms are {\it graphs}. Such coloured graphs (atoms) to be defined below are  
complete graphs of size at most $n$ whose edges are labelled by the rainbow colours. 
Some hyperedges are also labelled. While $n$--coloured graphs will be the atoms of a rainbow $\CA_n$, the board of a rainbow game will 
consist of coloured graphs.

Fix $2<n<\omega$. 

In general, {\it a coloured graph}, with no restriction on its size \cite{HH}, is a complete graph 
whose edges are labelled by the rainbow colours, $\g$ (greens), $\r$ (reds), and 
$\w$ (whites) satisfying certain consistency conditions. The greens are 
$\{\g_i: 1\leq i< n-1\}\cup \{\g_0^i: i\in \sf G\}$  and the reds are $\{\r_{ij}: i,j \in \sf R\}$ where
$\sf G$ and $\sf R$ are two relational structures. The whites are $\w_i: i\leq n-2$.
In coloured graphs certain triangles are forbidden.
For example a green triangle (a triangle whose edges are all green)  is forbidden. Not all red triangles are allowed.  
In consistent (allowed) red triangle the indices 
`must match' satisfying a certain `consistency condition'. Also, in coloured graphs some $n-1$ tuples (hyperedges) are labelled by shades of yellow \cite{HH}. 
More specifically the following are forbidden triangles in coloured graphs:
$(\g, \g^{'}, \g^{*}), (\g_i, \g_{i}, \w_i), \mbox{any }1\leq i\leq  n-2,$  
$(\g^j_0, \g^k_0, \w_0)\mbox{ any } j, k\in \sf G$, and finally 
$(\r_{ij}, \r_{j'k'}, \r_{i^*k^*})\mbox{ unless }i=i^*,\; j=j'\mbox{ and }k'=k^*,$
and no other triple of colours is forbidden \cite[4.3.3]{HH}. 
Given relational structures 
$\sf G$ and $\sf R$ the rainbow 
atom structure of dimension $n$ are equivalence classes of surjective maps $a:n\to \Delta$, where $\Delta$ is a coloured graph
in the rainbow signature, and the equivalence relation relates two such maps $\iff$  they essentially define the same graph \cite[4.3.4]{HH};
the nodes are possibly different but the graph structure is the same. We let $[a]$ denote the equivalence class containing $a$.

The accessibility binary relation corresponding 
to the $i$th  cylindrifier $(i<n)$ is defined by:  $[a] T_i [b]\iff a\upharpoonright n\setminus \{i\}=b\upharpoonright n\setminus \{i\},$ 
and the accessibility unary relation corresponding to the $ij$th diagonal element ($i<j<n$) 
is defined by: $[a]\in D_{ij}\iff a(i)=a(j)$. We refer to the atom $[a]$ ($a:n\to \Delta$) as a rainbow atom. 
We denote the complex algebra of the rainbow atom structure based on 
$\sf G$ and $\sf R$ by $\CA_{\sf G, R}$. The dimension of $\CA_{\sf G, R}$ will be clear 
from context.

Certain special finite coloured graphs play an essential role (in rainbow games). 
Such special coloured graphs are called {\it cones}:

{\it Let $i\in {\sf G}$, and let $M$ be a coloured graph consisting of $n$ nodes
$x_0,\ldots,  x_{n-2}, z$. We call $M$ {\it an $i$ - cone} if $M(x_0, z)=\g_0^i$
and for every $1\leq j\leq m-2$, $M(x_j, z)=\g_j$,
and no other edge of $M$
is coloured green.
$(x_0,\ldots, x_{n-2})$
is called  the {\bf base of the cone}, $z$ the {\bf apex of the cone}
and $i$ the {\bf tint of the cone}.}

If $\A$ is an (atomic)  rainbow $\CA_n$, 
then the atomic games $G^m_k$ and $F^m$  translate to games on coloured graphs, 
cf. \cite[p.27--29]{HH}. The typical \ws\ of \pa\ in the rainbow game played on coloured graphs played between \pe\ and \pa\  
is bombarding \pe\ with $i$--cones, $i\in \sf G$, having the 
same base 
and distinct green tints.  To respect the rules of the game \pe\ has to choose a red label for appexes of two succesive cones.  
Eventually, running out of `suitable reds',  \pe\ is forced to play an inconsistent triple of reds where indices do not match.
Thus {\it \pa\ wins on a red clique} (a graph all of whose edges are lablled by a red).  
Such a \ws\ is dictated by a simple \ef\ forth  
game played on the relational structures $\sf G$ and $\sf R$ denoted by ${\sf EF}_r^p(\sf G, R)$, here $r$ is the number of rounds and $p$ is the number of pebble pairs 
\cite[Definition 16.2]{HHbook2}. 
\section{Atom--canonicity}

\subsection{Blowing up and blurring a finite rainbow algebra}

The next theorem refines the seminal result of Hodkinson's \cite{Hodkinson} which is the limiting case when $k=\omega$.
It is fully proved in \cite{mlq}. We start with an outline. Then we get more technical giving more than the gist of the idea of the proof
which is {\it blowing up and blurring a finite rainbow algebra}.

\begin{theorem}\label{can} Let $2<n<\omega$. Then there exists a countable atomic $\A\in \RCA_n$ such that
$\Cm\At\A\notin \bold S{\sf Nr}_n\CA_{n+3}$.  In particular, the variety $\bold S{\sf Nr}_n\CA_{n+k}$ is not atom--canonical for any $k\geq 3.$ 
\end{theorem}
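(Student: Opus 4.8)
The strategy is the \emph{blow up and blur} method applied to a suitable \emph{finite} rainbow algebra, as flagged in item~(1) of the overview. First I would fix two finite relational structures $\sf G$ (greens) and $\sf R$ (reds) and form the finite rainbow algebra $\D=\CA_{{\sf G},{\sf R}}$, chosen so that \pa\ has a \ws\ in the game $F^{n+3}(\At\D)$. By the last assertion of lemma~\ref{n}, a finite algebra on which \pa\ wins $F^{n+3}$ lies outside $\bold S{\sf Nr}_n\CA_{n+3}$; hence $\D\notin\bold S{\sf Nr}_n\CA_{n+3}$. The remaining and substantial task is to manufacture from $\D$ a \emph{countable, atomic, representable} algebra $\A\in\RCA_n$ whose \de\ completion $\Cm\At\A$ still contains a copy of $\D$. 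Once this is done, the conclusion is immediate: $\bold S{\sf Nr}_n\CA_{n+3}$ is a variety, hence closed under $\bold S$, so from $\D\subseteq\Cm\At\A$ and $\D\notin\bold S{\sf Nr}_n\CA_{n+3}$ we get $\Cm\At\A\notin\bold S{\sf Nr}_n\CA_{n+3}$.

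For the choice of $\sf G$ and $\sf R$ I would take both finite with the number of green tints exceeding the number of available reds by a margin tuned to $n+3$. \pa's strategy on the board of coloured graphs is the standard \emph{cone bombardment} described in the preliminaries: he plays a sequence of $i$--cones sharing a common base and with pairwise distinct green tints $i\in\sf G$, which forces \pe\ to colour the edges between successive apexes with reds. Since $\sf R$ is too small relative to $\sf G$ to be navigated within the $n+3$ nodes available in $F^{n+3}$, \pe\ is eventually driven to an inconsistent red triangle (indices failing the matching condition) on a red clique, and \pa\ wins. Formally this reduces to \pa\ having a \ws\ in the finite \ef\ forth game ${\sf EF}_r^p({\sf G},{\sf R})$ for values of $r$ (rounds) and $p$ (pebble pairs) bounded in terms of $n+3$, and the size gap between $\sf G$ and $\sf R$ is exactly what secures this win.

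The technical heart is the blow up and blur construction, where I expect the main obstacle to lie. I would blow up the finite atom structure $\At\D$ by splitting its atoms -- the reds in particular -- into infinitely many copies, obtaining a countable atom structure ${\bf At}$, and then equip it with a system of \emph{blurs}: a suitably structured set of sets of atoms, together with the associated index maps, satisfying the abstract blur conditions. The blurs must accomplish two opposing goals simultaneously. On the one hand they have to supply enough `thick' joins to build an $n$--dimensional representation of the term algebra $\A=\Tm{\bf At}$ over a disjoint union of cartesian squares, so that $\A\in\RCA_n$ is atomic with $\At\A={\bf At}$; this is the \emph{weak representability} half. On the other hand the blurring must be local enough that the original rainbow colours survive in $\Cm{\bf At}=\Cm\At\A$: the map sending each atom of $\D$ to the sum in $\Cm\At\A$ of its blown-up copies should extend to a $\CA_n$--embedding of $\D$ into $\Cm\At\A$. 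Checking the network consistency conditions against the blur conditions, and verifying that enough blurring is done to repair representability at the level of $\A$ while the embedded copy of $\D$ (and with it the failure of the $(n+3)$--neat embedding) is \emph{not} washed out in the completion, is precisely the delicate point.

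Finally, the `in particular' clause follows formally. Since $\A\in\RCA_n=\bold S{\sf Nr}_n\CA_\omega\subseteq\bold S{\sf Nr}_n\CA_{n+k}$ for every $k$, the algebra $\A$ is an atomic member of each $\bold S{\sf Nr}_n\CA_{n+k}$. For $k\geq 3$ we have $\bold S{\sf Nr}_n\CA_{n+k}\subseteq\bold S{\sf Nr}_n\CA_{n+3}$, so $\Cm\At\A\notin\bold S{\sf Nr}_n\CA_{n+3}$ yields $\Cm\At\A\notin\bold S{\sf Nr}_n\CA_{n+k}$. Thus for every $k\geq 3$ the variety $\bold S{\sf Nr}_n\CA_{n+k}$ contains the atomic algebra $\A$ (indeed $\A$ is dense in $\Cm\At\A$) but not its \de\ completion $\Cm\At\A$, so it fails to be atom--canonical, as required.
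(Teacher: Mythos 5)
Your plan coincides with the paper's own proof in all essentials: the paper takes the finite rainbow algebra $\CA_{n+1,n}$ (greens the complete irreflexive graph $n+1$, reds $n$), on which \pa\ wins $F^{n+3}(\At\CA_{n+1,n})$ via exactly the cone-bombardment you describe (lifted from \pa's win in ${\sf EF}_{n+1}^{n+1}(n+1,n)$, with $n+3=(n+1)+2$ nodes), so that the last part of lemma \ref{n} puts it outside $\bold S{\sf Nr}_n\CA_{n+3}$; it then blows up the reds into infinitely many copies, obtains weak representability of the term algebra model--theoretically via a Hodkinson-style $n$--homogeneous model with an extra shade of red $\rho$, and embeds $\CA_{n+1,n}$ into $\Cm\At\A$ by mapping each red atom to the sum of its copies — precisely your map — closing with the same variety/monotonicity argument for all $k\geq 3$. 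The only divergence is one of completeness, not of route: you leave the parameter choice and the weak-representability half as acknowledged outstanding work, which is exactly where the paper invests its technical effort.
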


\subsection*{Model--theoretic outline of proof of theorem \ref{can}}

The outline of proof we present now of the above bolded statement 
focuses more on explaining the main ideas and is punctuated by some comments.
The outline is divided to three parts. In the first part we construct a certain model on which 
our constrution is based. 

{\bf (1) The model:} Fix $2<n<\omega$ and $1\leq k\leq \omega$. We have a rainbow signature $L_{ra}$ \cite[Definition 3.6.9 (i)]{HHbook2}. 
The signature $L_{ra}$ has, among other symbols determined by the other colours, $n+k$ green binary relations $\g_0^i: 1\leq i\leq n+k$ 
and  $\omega$--many red binary relations $\r_{ij}^l$, $i<j<n$, and $l\in \omega$. The other colours are irrelevant for the moment 
as far as this outline is concerned.  
The signature is like the rainbow signature in \cite{Hodkinson}, except that here we have $n+k$  many 
green binary relations. When $k=\omega$  the above rainbow signature coincides with the rainbow signature used in constructing 
the algebra denoted by $\A$ in \cite[Definition 4.1]{Hodkinson}.
(When $k=\omega$, by $n+\omega$, we mean ordinal addition so that $n+\omega=\omega$.)
We add to $L_{ra}$ an additional binary relation symbol $\rho$ forming the strict expansion $L=L_{ra}\cup \{\rho\}$.
The rainbow construction implemented here can be coded in a  theory $T$ expanding the rainbow theory 
$T_{ra}$, the latter given in \cite[Definition 3.6.9(ii)]{HHbook2}. 
The signature $L_{ra}$ of $T_{ra}$ is expanded to the language $L$ forming $T$,  and $T$ stipulates finitely many first order formulas 
spelling out `consistency conditions' for the new binary relation  $\rho$ in connection to other relation symbols (colours) in $L$ including itself (examples will be given shortly). 
In the present context, the theory $T$ is a first order
theory $\iff$ the green binary relations are finite. The presence of countably (infinitely) many green relation 
symbols makes the rainbow theory an $L_{\omega_1, \omega}$ theory \cite[Top of p.83]{HHbook2}.

A {\it coloured graph} is a model of $T_{ra}$. 
An {\it $n$--coloured graph} is a coloured graph of size at most $n$. By  {\it an extended coloured graph}, we understand a model of $T$ (in $L$).  
An {\it $n$--extended coloured graph} is a coloured graph of size at most $n$ allowing $\rho$ as a label.
If $a, b\in \Delta$, $\Delta$ an extended colured graph, and 
$(a, b)\in \g$ for some binary green  relation $\g$, say,  {\it we say that the edge $(a, b)$ is labelled by $\g$} or labelled by a green. 
This terminology will apply to other relations (colours)
in the signature. 
By a {\it red graph}, we mean an extended coloured graph that has an edge labelled by some red.  
In the signature $L_{ra}$ there are $\omega$--many (distinct) red $n$--coloured graphs.
By a {\it red clique} we undertand an extended coloured graph {\it all of whose edges} are labelled by a red, possibly $\rho$.
One constructs like in \cite{Hodkinson}, in the spirit of Fr\"aisse constructions,  a countable (infinite) $n$--homogeneous  model $M$ 
of $T$, as the limit of a play whose board consists of models of $T$, namely,  extended 
coloured graphs. The triplets $(\rho, \r,  \r')$ and $(\rho, \rho, \r)$ are consistent for any $\r$ and $\r'$  in $L_{ra}$, 
meaning that any extended coloured graph played during the game, as well as the limit, can contain the triangles whose edges are labelled by such colours as 
a subgraph; other red triples are forbidden and {\it all} green triangles are forbidden

The limit of the play $M$ is also an extended rainbow graph. Here by $n$--homogeneity, is meant that every $n$--coloured graph embeds into $M$, and that 
such coloured graphs are uniquely determined by
their isomorphism types, regardless of their
location in $M$: 
If $\triangle \subseteq \triangle'$ are extended coloured graphs,  $|\triangle'|
\leq n$, and $\theta : \triangle \rightarrow M$ is an embedding,
then $\theta$ extends to an embedding $\theta' : \triangle'
\rightarrow M$.

By the homogenuity built in $M$ such $n$--coloured graphs will constitute the atoms of the (relativized) set algebras 
based on $M$ as specified in a while; the representable algebra, and its non--representable \d\ completion.
This  game is played between \pe\ and \pa. As is the case with `rainbow games' \cite{HH, HHbook} 
\pa\ challenges \pe\  with  {\it cones} having  green {\it tints $(\g_0^i)$}, 
and \pe\ wins if she can respond to such moves. This is the only way that \pa\ can force a win.  \pe\ 
has to respond by labelling {\it appexes} of two succesive cones, having the {\it same base} played by \pa.
By the rules of the game, she has to use a red label. The \ws\ is implemented by \pe\  using the red label $\rho$ outside the rainbow signature 
that comes to her rescue  whenever she runs out of `rainbow reds', so she can respond with extended coloured graphs. 
It turns out  inevitable, that some edges in $M$ 
are  labelled by $\rho$ during the play; in fact these edges labelled by $\rho$ will form an {\it infinite red clique}
(an infinite complete extended graph whose edges are all labelled by $\rho$.)

{\bf (2) The set algebra and its \d\ completion based on the model:} Now we forget about the red label $\rho$ for a while. 
All formulas are now taken in the rainbow signature $L_{ra}$. 
Having $M$ at hand, one constructs  two atomic $n$--dimensional set algebras based on $M$, sharing the same atom structure and having 
the same top element.  
The first set algebra $\A$ is the $L_{n}$
formula set algebra having top element $W\subseteq{} ^nM$ to be specified shortly.  
The second algebra $\C$ is the $L_{\infty, \omega}^n$ formula set algebra having the same top element $W$. 
The set  $W$ is obtained from $^nM$ by discarding assignments whose
edges are labelled by $\rho$, in symbols 
$W = \{ \bar{a} \in {}^n M : M \models ( \bigwedge_{i < j <n} \neg \rho(x_i, x_j))(\bar{a}) \},$
For $\phi\in L_{\infty, \omega}^n$,  let $\phi^W=\{s\in W: M\models \phi[s]\}.$
Then $\A$ has universe $\{\phi^W: \phi \text{ an $L_n$ formula}\}$, 
and $\C$ has universe consisting of all $\phi^{W}$, but now $\phi$ is an $L_{\infty, \omega}^n$ formula.
In both cases the operations are the usual concrete 
operations of cylindric set algebras, read off from 
the semantics of the connectives, relativized to $W$. Plainly $\A\subseteq \C$.

The atoms for both algebras are determined by so--called $\sf MCA$ formulas \cite[Definition 4.3]{Hodkinson} in the rainbow signature $L_{ra}$.  
Every such $\sf MCA$ formula defines a rainbow atom, a surjective map $a:n\to \Delta$, so that $\Delta$ is an
$n$--coloured graphs (in the rainbow signature).  
Now we have $\At\A=\At\C$. The common atom structure will be
denoted by $\bf At$ in the more technical proof to follow.
The $n$--homogeneity built into $M$  now plays another crucial role. The classical semantics with respect to $^nM$ and relativized semantics obtained by restricting assignments to $W$ 
agree, with respect to  {\it first order formulas using $n$ variables}, which is 
not the case with $L_{\infty, \omega}^n$ formulas taken in the same (rainbow) signature without the red label $\rho$.
This can be proved using 
$n$ back--and--forth systems 
induced by any permutation on the set $\omega\cup \{\rho\}$, cf. \cite[Proposition 3.13]{Hodkinson}. 
Hence the set algebra $\A$ is isomorphic to a cylindric set algebra 
having top element $^nM$. 

{\bf (3) Blowing up and blurring a finite rainbow algebra:} When $k=\omega$,  Hodkinson \cite{Hodkinson} proves that $\C$ is not representable
using a semantical argument \cite[\S 5.2]{Hodkinson}.
So we are certain that $\Cm\At\A\notin \bold S{\sf Nr}_n\CA_{n+m}$ for some finite $m>0$,
because $\bigcap_{i>0}\bold S\sf Nr_n\CA_{n+i}=\sf RCA_n$ and $\Cm\At\A\notin {\sf RCA}_n.$ 
But the $\omega$--greens used, {\it do not 
give us any information on such an $m$, the dimension of this dilation}; for example what is the least such $m$?
When does the \de\ completion `stop to be representable?'
We proceed differently varying the parameter $k$. We use a so--called {\it blow up and blur construction}, a highly indicative term introduced in \cite{ANT}. 
This is a syntactical approach. 
By choosing $k=1$, one can embed a finite algebra $\D$ into $\C$ such that $\D$ is outside $\bold S{\sf Nr}_n\CA_{n+m}$, $m\geq 3$ and so will be $\C$.

Let us elaborate some more. 
Assume that $k<\omega$. The \ef\ forth game ${\sf EF}_{n+k}^{n+k}(n+k, n)$ \cite[Definition 16.2]{HHbook}, is played between \pa\ and \pe\ on the structures $n+k$ 
and $n$ viewed as complete irreflexive graphs.
It is obvious that \pa\ has a \ws\ in $n+1$ rounds. In each round $0,1,\ldots, n+1$, \pa\ places a  new pebble  on  an element of $n+k$.
The edges relation in $n+k$ is irreflexive so to avoid losing
\pe\ must respond by placing the other  pebble of the pair on an unused element of $n+k$.
After $n+1$ rounds there will be no such element,
and she loses in the next round.
This game can be lifted to the graph game on the finite rainbow algebra $\CA_{n+k, n}$, based on $n+k$ (the greens) 
and $n$ (reds) where \pa\ has a \ws\ in finitely many rounds  rounds using $n+k+2$ nodes, that is in the game 
$F^{n+k+2}_{\omega}\At(\CA_{n+k, n})$. In fact, he does not need to reuse nodes, so \pa\ actually wins the harder game $G^{n+k+2}_{\omega}\At(\CA_{n+k, n}).$
One can embed 
$\CA_{n+k, n}$ into $\C=\Cm\At\A$.  
It follows by lemma \ref{n} that  $\CA_{n+k, n}\notin \bold S{\sf Nr}_n\CA_{n+k+2}$.  The smaller $k$ is, the sharper the result we obtain. 
So take $k$ to be the least possible value, namely, $k=1$. In this case the $n+1$ 
greens tell us that $\Cm\At\A\notin \bold S{\sf Nr}_n\CA_{n+m}$ 
for any $m\geq 3$, because now $\CA_{n+1, n}\notin \bold S{\sf Nr}_n\CA_{n+3}$ and $\CA_{n+1, n}$
embeds into $\C$ by mapping every rainbow atom $[a]: n\to \Delta$,  $\Delta$ an $n$--coloured graph in the finite rainbow signature of $\CA_{n+1, n}$ to the join of its 
copies.  A copy of $[a]:n\to \Delta$ is a rainbow atom in $\C$ of the form $[b]:n\to \Delta'$ where $\Delta'$ is isomorphic to $\Delta$ modulo altering 
superscripts of the reds. In particular, the copy of any $[a]: n\to \Delta$ where $\Delta$ is not red is itself.

{\it We say that $\A$ and $\C$ are obtained by  {\bf blowing up and blurring} $\CA_{n+1, n}$. 
The algebraic structure of $\CA_{n+1, n}$  is {\bf blurred} in $\A$; $\CA_{n+1, n}$ does not embed in $\A$. 
On the other hand,  $\CA_{n+1, n}$ is  {\bf not  blurred} in $\C$, 
because $\CA_{n+1, n}$ {\bf embeds in $\C$}.}

\subsection{More (technical) details}

The argument used, as indicated above,   is a combination of the rainbow construction in \cite{Hodkinson} which is implemented model--theoretically, 
together with the blow up and blur construction used in \cite{ANT}. Here we proceed `the other way round'. We start 
where we ended in the above sketch.
We embed the finite (rainbow) algebra $\D=\CA_{n+1, n}\notin \bold S{\sf Nr}_n\CA_{n+3}$ 
in the \d\ completion of an atomic (infinite) algebra $\A\in {\sf RCA}_n$, where $\A$ 
is obtained by blowing up and blurring $\D$. The `blowing up' is done by splitting the red atoms of $\D$ each into
infinitely many atoms (of  $\A$). 

{\bf (1) Blowing up and blurring  $\CA_{n+1, n}$ forming a weakly representable atom structure $\At$:}
Take the finite rainbow cylindric algebra $R(\Gamma)$
as defined in \cite[Definition 3.6.9]{HHbook2},
where $\Gamma$ (the reds) is taken to be the complete irreflexive graph $m$, and the greens
are  $\{\g_i:1\leq i<n-1\}
\cup \{\g_0^{i}: 1\leq i\leq n+1\}$ so tht $\sf G$ is the complete irreflexive graph $n+1$.
Call this finite rainbow $n$--dimensional cylindric algebra, based on ${\sf G}=n+1$ and ${\sf R}=n$
$\CA_{n+1, n}$  and denote its finite atom structure by $\bf At_f$.
One  then replaces each  red colour
used in constructing  $\CA_{n+1, n}$  by infinitely many with superscripts from $\omega$, 
getting a weakly representable atom structure $\bf At$, that is,
the term algebra $\Tm\bf At$ is representable.
The resulting atom structure (with $\omega$--many reds),  call it $\bf At$, 
is the rainbow atom structure that is like the atom structure of the (atomic relativized set) 
algebra $\A$ defined in \cite[Definition 4.1]{Hodkinson} 
except that we have $n+1$ greens
and not infinitely many as is the case in \cite{Hodkinson}.
Everything else is the same. In particular, the rainbow signature \cite[Definition 3.6.9]{HHbook2} now consists of $\g_i: 1\leq i<n-1$, $\g_0^i: 1\leq i\leq n+1$,
$\w_i: i<n-1$,  $\r_{kl}^t: k<l< n$, $t\in \omega$,
binary relations, and $n-1$ ary relations $\y_S$, $S\subseteq n+1$.
There is a shade of red $\rho$; the latter is a binary relation that is {\it outside the rainbow signature}.

But $\rho$ is used as a  label for  coloured graphs built during a `rainbow game', and in fact, \pe\ can win the rainbow $\omega$--rounded game
and she builds an $n$--homogeneous (coloured graph) model $M$ as indicated in the above outline by using $\rho$ when
she is forced a red \cite[Proposition 2.6, Lemma 2.7]{Hodkinson}.
Then $\Tm\At$ is representable as a set algebra with unit $^nM$; this can be proved exactly as in \cite{Hodkinson}.
In fact, $\Tm\bf At\subseteq \A$, with $\A$ as described in the preceding outline.

{\bf (2) Embedding $\CA_{n+1, n}$ into the \d\ completion of $\Tm\bf At$:} 
We embed $\CA_{n+1, n}$ into  the complex algebra $\Cm\bf At$, the \d\ completion of $\Tm\bf At$.
Let ${\sf CRG}_f$ denote  the class of coloured graphs on 
$\bf At_f$ and $\sf CRG$ be the class of coloured graph on $\bf At$. We 
can assume that  ${\sf CRG}_f\subseteq \sf CRG$.
Write $M_a$ for the atom that is the (equivalence class of the) surjection $a:m\to M$, $M\in \sf CGR$.
Here we identify $a$ with $[a]$; no harm will ensue.
We define the (equivalence) relation $\sim$ on $\At$ by
$M_b\sim N_a$, $(M, N\in {\sf CGR}):$
\begin{itemize}
\item $a(i)=a(j)\Longleftrightarrow b(i)=b(j),$

\item $M_a(a(i), a(j))=\r^l\iff N_b(b(i), b(j))=\r^k,  \text { for some $l,k$}\in \omega,$

\item $M_a(a(i), a(j))=N_b(b(i), b(j))$, if they are not red,

\item $M_a(a(k_0),\dots, a(k_{n-2}))=N_b(b(k_0),\ldots, b(k_{n-2}))$, whenever
defined.
\end{itemize}
We say that $M_a$ is a {\it copy of $N_b$} if $M_a\sim N_b$. 
We say that $M_a$ is a {\it red atom} if it has at least one edge labelled by a red rainbow colour $\r_{ij}^l$ for some $i<j<n$ and $l\in \omega$. 
Clearly every red atom $M_a$ has infinitely countable many red copies, which we denote by $\{M_a^{(j)}: j\in \omega\}$.
Now we define a map $\Theta: \CA_{n+1, n}=\Cm{\bf At_f}$ to $\Cm\At$,
by  specifing  first its values on ${\sf At}_f$,
via $M_a\mapsto \sum_jM_a^{(j)}$; each atom maps to the suprema of its 
copies.  If $M_a$ is not red,   then by $\sum_jM_a^{(j)}$,  we understand $M_a$.
This map is extended to $\CA_{n+1, n}$ the obvious way by $\Theta(x)=\bigcup\{ \Theta(y):y\in \At\CA_{n+1, n}, y\leq x\}$. The map
$\Theta$ is well--defined, because $\Cm\At$ is complete. 
It is not hard to show that the map $\Theta$ 
is an injective homomorphim. 
Injectivity follows from the fact that $M_a\leq f(M_a)$, hence $\Theta(x)\neq 0$ 
for every atom $x\in \At(\CA_{n+1, n})$.  We check only preservation of cylindrifiers. Let $i<n$. By additivity (of cylindrifiers), 
we restrict our attention to atoms  $M_a\in \bf At_f$ with $a:n\to M$, and $M\in {\sf CRG}_f\subseteq \sf CRG$. Then: 
$$f({\sf c}_i^{\Cm\bf At_f}a)=f (\bigcup_{[c]\equiv_i[a]} M_c)
=\bigcup_{[c]\equiv_i [a]}f(M_c)$$
$$=\bigcup_{[c]\equiv_i [a]}\sum_j M_c^{(j)}
=\sum_j \bigcup_{[c]\equiv_i [a]}M_c^{(j)}$$
$$=\sum _j{\sf c}_i^{\Cm\bf At}M_a^{(j)}
={\sf c}_i^{\Cm\bf At}(\sum_j M_a^{(j)})
={\sf c}_i^{\Cm\bf At}f(a).$$
{\bf (3) Exactly like in above outline, one proves that \pa\ has  a 
\ws\ for \pe\ in $F^{n+3}\At(\CA_{n+1, n})$}
using the usual rainbow strategy by bombarding \pe\ with cones having the same base and distinct green tints.
He needs $n+3$ nodes to implement his \ws. 
Then by lemma \ref{n} this implies that  $\CA_{n+1,n}\notin
\bold S{\sf Nr}_n\CA_{n+3}$. Since $\CA_{n+1,n}$ embeds into $\Cm\bf At$, 
hence $\Cm\bf At$  is outside 
$\bold S{\sf Nr}_n\CA_{n+3}$, too.

\section{First order definability}

Throughout this section, unless otherwise indicated, 
$n$ is a finite ordinal $>1$.
${\sf Gs}_{n}$ is the class of {\it generalized set algebras} of dimension $n$ as defined in the introduction. $\sf Ws_\omega$ is the class of weak set algebras of dimension
$\omega$. An algebra $\A\in {\sf Ws}_{\omega}\iff \A$ has top element $V\subseteq {}^{\omega}U$ where $V$ is the set of all sequences agreeing co--finitely with a fixed in advance 
sequence $p\in {}^{\omega}U$ and the operations of $\A$ are defined like in set algebras restricted to $V$. In conformity with the notation of \cite{HMT2}, 
we denote $V$ (called an $\omega$--dimensional weak space) 
by ${}^{\omega}U^{(p)}$. Recall that ${\sf CRCA}_n$ denotes the class of completely 
representable $\CA_n$s.
\begin{theorem}\label{ch} Let $2<n<\omega$. Then ${\sf CRCA}_n\subseteq \bold S_c{\sf Nr}_n\CA_{\omega}$. Conversely, if $\A\in \bold S_c{\sf Nr}_n\CA_{\omega}$ 
has countably many atoms, then $\A\in {\sf CRCA}_n$. 
\end{theorem}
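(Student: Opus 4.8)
The plan is to prove the two inclusions separately: the forward one by a direct construction from the representation, and the converse by passing to the atomic games of definition \ref{game} and invoking theorem \ref{rep}.

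For the inclusion ${\sf CRCA}_n\subseteq \bold S_c\Nr_n\CA_\omega$, let $\A\in {\sf CRCA}_n$ and fix a complete representation $f\colon\A\to\wp(V)$, where $V=\bigcup_{i\in I}{}^nU_i$ is a disjoint union of cartesian squares. I would lift the base to dimension $\omega$ by putting $W=\bigcup_{i\in I}{}^\omega U_i$ and $\D=\wp(W)$, the full generalized cylindric set algebra of dimension $\omega$ on this base, so $\D\in\CA_\omega$. The structural fact driving the argument is that $\Nr_n\D\cong\wp(V)$: an element $X\subseteq W$ satisfies $\cyl_iX=X$ for all $n\le i<\omega$ exactly when membership of a tuple in $X$ depends only on its first $n$ coordinates, and projection onto those coordinates is then a $\CA_n$--isomorphism $\Nr_n\D\to\wp(V)$ (its inverse cylindrifies a relation up along every dimension $\ge n$; the diagonals $\diag_{ij}$ and cylindrifiers $\cyl_i$ with $i,j<n$ match on the nose). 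Composing $f$ with this isomorphism embeds $\A$ into $\Nr_n\D$. It remains to check that the embedding is complete, i.e. $\A\subseteq_c\Nr_n\D$: if $X\subseteq\A$ and $\sum^\A X=1$, then since $f$ preserves this sum and $f(1^\A)=V=1^{\Nr_n\D}$ we get $\bigcup_{x\in X}f(x)=V$, and because suprema in the complete algebra $\wp(V)\cong\Nr_n\D$ are unions, $\sum^{\Nr_n\D}X=1$. Hence $\A\in\bold S_c\Nr_n\CA_\omega$; note this half uses no cardinality hypothesis.

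For the converse, assume $\A$ is atomic with countably many atoms and $\A\subseteq_c\Nr_n\D$ for some $\D\in\CA_\omega$. The idea is to rerun the proof of lemma \ref{n} with the finite parameter $m$ replaced by $\omega$ and $\C:=\D$. The auxiliary element $N^+\in\D$, the network operations $N\theta$, and facts (1)--(3) are all insensitive to whether the index set of available nodes is finite or $\omega$; finiteness of $m$ entered only the closing paragraph on canonical extensions, which is irrelevant here. This produces a winning strategy for \pe\ in $F^\omega(\At\A)$, which by definition \ref{game}(3) is precisely the game $G_\omega(\At\A)$. Since $\A$ is atomic with only countably many atoms, the second clause of theorem \ref{rep} converts this winning strategy into a genuine complete representation, so $\A\in{\sf CRCA}_n$.

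The main obstacle lies in the converse, and two points deserve care. First, complete representability forces atomicity, but the bare hypothesis ``$\A\in\bold S_c\Nr_n\CA_\omega$ with countably many atoms'' does not by itself deliver it, so atomicity must be read into the hypotheses (exactly as in theorem \ref{rep}) or secured separately. Second, the countability of $\At\A$ is essential rather than cosmetic: it is what licenses the passage from ``\pe\ wins $G_\omega$'' to ``$\A$ is completely representable'' in theorem \ref{rep}, where the representation is assembled by a back--and--forth construction along a single run of the game, a step that genuinely requires only countably many atoms to be matched. By contrast, the forward direction is essentially bookkeeping once the isomorphism $\Nr_n\wp(W)\cong\wp(V)$ is in hand.
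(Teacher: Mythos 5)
Your converse direction is correct, and it takes a genuinely different route from the paper's: the paper reduces to the term algebra, noting $\Tm\At\A\subseteq_d\A$, so $\Tm\At\A\in\bold S_d\bold S_c{\sf Nr}_n\CA_\omega\subseteq\bold S_c{\sf Nr}_n\CA_\omega$ is countable, then cites \cite[Theorem 5.3.6]{Sayedneat} and transfers complete representability back across the shared atom structure; you instead re-run the proof of lemma \ref{n} at $m=\omega$ (legitimate, since networks in play have finite node sets, and the closing canonical-extension paragraph is not needed) and then invoke theorem \ref{rep}. Both of your caveats are well placed: atomicity must indeed be read into the hypothesis, exactly as the paper's own proof does (``if $\B$ is atomic having countably many atoms\dots''), and countability of $\At\A$ is what powers the passage from a \ws\ in $G_\omega$ to an actual complete representation.

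The genuine gap is in your forward direction: the claimed isomorphism $\Nr_n\wp(W)\cong\wp(V)$ is false for \emph{full} infinite-dimensional power-set algebras. The condition $\cyl{i}X=X$ for all $n\le i<\omega$ only yields closure of $X$ under changing \emph{finitely many} coordinates $\ge n$, not independence of all of them. Concretely, fix $U$ with $|U|\ge 2$ and let $X=\{f\in{}^{\omega}U: f(i)=f(0)\text{ for all but finitely many }i\}$; then $\cyl{j}X=X$ for every $j\ge 1$, so $X\in\Nr_n\wp({}^{\omega}U)$, yet $X\neq{}^{\omega}U$ while both project onto all of ${}^nU$ --- so projection is not injective on the neat reduct and your ``exactly when'' fails. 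This is precisely why the paper's proof replaces ${}^{\omega}U_i$ by the \emph{weak} spaces $W_i={}^{\omega}U_i^{(f_i)}$: there any two sequences agree cofinitely, so finite-change closure \emph{does} imply that membership depends only on the first $n$ coordinates (this is the step ``if $f\in x$ and $g\in W_i$ satisfy $g(k)=f(k)$ for all $k<n$, then $g\in x$'' in the paper). Fortunately your argument is repairable with little surgery: you never need the inverse of the projection, only the cylinder map $Y\mapsto\{f\in W: f\upharpoonright n\in Y\}$, which is a bona fide (non-surjective) $\CA_n$-embedding of $\wp(V)$ into $\Nr_n\D$ commuting with $\cyl{i}$ and $\diag{i}{j}$ for $i,j<n$ and with arbitrary unions; composing with your complete representation $f$ and observing that whenever $\sum^{\A}S=1$ the images have union $W$, so $1$ is the only upper bound in $\Nr_n\D$, gives $\A\subseteq_c\Nr_n\D$ directly. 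Either make that replacement, or adopt the paper's weak units outright.
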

\begin{proof} The last part follows from \cite[Theorem 5.3.6]{Sayedneat} by noting that if 
$\B$ is atomic having countably many atoms and $\B\in \bold S_c{\sf Nr}_n\CA_{\omega}$, then $\Tm\At\B\subseteq_d \B$, 
so $\Tm\At\B\in \bold S_d\bold S_c{\sf Nr}_n\CA_{\omega}\subseteq \bold S_c\bold S_c{\sf Nr}_n\CA_{\omega}=
\bold S_c{\sf Nr}_n\CA_{\omega}$, and $\Tm\At\B$ is atomic and countable. 
Furthermore, $\Tm\At\B$ is completely representable $\iff$ $\B$ is completely representable, because
they share the same atom structure. The cited theorem \cite[Theorem 5.3.6]{Sayedneat} 
tells us that $\Tm\At\B$ is completely representable, so $\B$ is completely representable, too.

We prove the first inclusion.
Assume that $\C\in \sf Gs_{n}$ is a complete representable of $\A$ via $t$. 
That is $t:\A\to \C$ is a complete representation. Assume further that $\C$ 
has top element
a disjoint union of the form $\bigcup_{i\in I}{}^nU_i$ ($I$ and $U_i$ non--empty sets and $U_i\cap U_j=\emptyset$ for 
$i\ne j\in I$). For $i\in I$, let $E_i={}^nU_i$. Fix $f_i\in {}^{\omega}U_i$. Let $W_i={}^{\omega}U_i^{(f_i)}$.
Let ${\C}_i=\wp(W_i)$. Then $\C_i$ is atomic; indeed the atoms are the singletons.
Let $x\in \Nr_{n}\C_i$, that is ${\sf c}_jx=x$ for all $n\leq j<\omega$.
Now if  $f\in x$ and $g\in W_i$ satisfy $g(k)=f(k) $ for all $k<n$, then $g\in x$.
Hence $\Nr_{n}\C_i$
is atomic;  its atoms are $\{g\in W_i:  \{g(i):i<n\}\subseteq U_i\}.$

Define $h_i: \A\to \Nr_{n}\C_i$ by
$h_i(a)=\{f\in W_i: \exists a'\in \At\A, a'\leq a;  (f(i): i<n)\in t(a')\}$. 
Let $\D=\bold P _i \C_i$. Let $\pi_i:\D\to \C_i$ be the $i$th projection map.
Now clearly  $\D$ is atomic, because it is a product of atomic algebras,
and its atoms are $(\pi_i(\beta): \beta\in \At(\C_i))$.
Now  $\A$ embeds into $\Nr_{n}\D$ via $J:a\mapsto (h_i(a) :i\in I)$. If $x\in \Nr_n\D$,
then for each $i$, we have $\pi_i(x)\in \Nr_{n}\C_i$, and if $x$
is non--zero, then $\pi_i(x)\neq 0$. By atomicity of $\C_i$, there is an $n$--ary tuple $y$, such that
$\{g\in W_i: g(k)=y_k\}\subseteq \pi_i(x)$. It follows that there is an atom
$b\in \A$, such that  $x\cdot  J(b)\neq 0$, and so the embedding is atomic, hence complete.
We have shown that $\A\in \bold S_c{\sf Nr}_{n}\CA_{\omega}.$
and we are done.
\end{proof}
For a class $\sf K$ of $\sf BAO$s, we let $\K\cap \bf At$ denote the class of atomic algebras in $\K$.
The following corollary can be distilled from the above proof since the constructed $\omega$--dilation of the given completely representable
$\CA_n$ is a {\it full generalized weak set algebra} in the sense of \cite[Definition 3.1.2(iv)]{HMT2},
 so it is atomic.  The rest follows from lemma \ref{n} and the second part  of theorem \ref{ch}. 
\begin{corollary}
Assume that $2<n<\omega$. Then $\A\in \CA_{n}$ is completely representable 
$\implies \A\in \bold S_c{\sf Nr}_{n}(\CA_{\omega}\cap {\bf At})\implies  \A$ is atomic and $\A\in \bold S_c{\sf Nr}_{n}\CA_{\omega}
\implies$ \pe\ has  a \ws\ in $G_{\omega}$ and $F^{\omega}.$ 
All  reverse implications hold, if $\A$ has countably many atoms.
\end{corollary}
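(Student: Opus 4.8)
The plan is to read the displayed statement as a chain of four conditions --- (I) $\A$ is completely representable, (II) $\A\in\bold S_c{\sf Nr}_n(\CA_{\omega}\cap{\bf At})$, (III) $\A$ is atomic and $\A\in\bold S_c{\sf Nr}_n\CA_{\omega}$, (IV) \pe\ has a \ws\ in $G_{\omega}(\At\A)$ and $F^{\omega}(\At\A)$ --- first establishing the three forward arrows unconditionally, and then checking that each one reverses once $\A$ has only countably many atoms. Since all the substantive work is already contained in Theorem \ref{ch}, Lemma \ref{n} and Theorem \ref{rep}, the proof will amount to reading off the right pieces.

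For the forward arrow (I)$\Rightarrow$(II) I would revisit the proof of Theorem \ref{ch}. Starting from a complete representation of $\A$ with base $\bigcup_{i\in I}{}^nU_i$, that proof manufactures the $\omega$--dilation $\D=\bold P_{i\in I}\C_i$ with $\C_i=\wp(W_i)$, $W_i={}^{\omega}U_i^{(f_i)}$, together with a complete embedding $J:\A\to\Nr_n\D$. The only thing to add is the observation recorded after the theorem, namely that $\D$ is a \emph{full} generalized weak set algebra in the sense of \cite[Definition 3.1.2(iv)]{HMT2}, and such algebras are atomic (the atoms being singletons, and neat reducts commuting with products). Hence $\D\in\CA_{\omega}\cap{\bf At}$ and $\A\subseteq_c\Nr_n\D$, which is exactly (II). The arrow (II)$\Rightarrow$(III) is then almost immediate: forgetting atomicity of the dilation gives $\A\in\bold S_c{\sf Nr}_n\CA_{\omega}$ at once, while atomicity of $\A$ follows from the standard fact that a complete subalgebra of an atomic algebra is atomic, applied to $\A\subseteq_c\Nr_n\D$ with $\Nr_n\D$ atomic. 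Finally (III)$\Rightarrow$(IV) is precisely the $m\ge\omega$ instance of Lemma \ref{n}: its proof is written for $\A\subseteq_c\Nr_n\C$, so it applies to $\A\in\bold S_c{\sf Nr}_n\CA_{\omega}$ verbatim and yields a \ws\ for \pe\ in both $F^{\omega}(\At\A)$ and $G_{\omega}(\At\A)$.

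For the reverse implications I would assume throughout that $\A$ has countably many atoms, and observe that it suffices to climb back to complete representability, since the forward arrows then re--traverse the chain. Indeed, if \pe\ has a \ws\ in $G_{\omega}(\At\A)$ then $\A$ is completely representable by Theorem \ref{rep}; and, more economically, whenever merely $\A\in\bold S_c{\sf Nr}_n\CA_{\omega}$ holds, the second part of Theorem \ref{ch} already gives $\A\in{\sf CRCA}_n$. Thus (IV)$\Rightarrow$(I), (III)$\Rightarrow$(I) and (II)$\Rightarrow$(I) all hold under the countability hypothesis (the last two factoring through Theorem \ref{ch}, the first through Theorem \ref{rep}), and since (I) is the strongest condition and forces each weaker one by the forward direction, every reverse arrow holds and the four conditions coincide.

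The one step I would treat with care --- and the place where the argument could silently go wrong --- is the bookkeeping in (I)$\Rightarrow$(II): one must check that the specific dilation produced inside the proof of Theorem \ref{ch} really is a \emph{full} generalized weak set algebra, so that atomicity of the dilation (and hence, via $\bold S_c$, of $\A$) is genuinely available, and that the embedding built there is complete rather than merely dense, so that the construction lands in $\bold S_c{\sf Nr}_n$ with its atomic dilation intact. Once that verification is in place, the remainder is a direct appeal to the cited results.
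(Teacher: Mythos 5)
Your proposal is correct and takes essentially the same route as the paper, which likewise distils the corollary from the dilation constructed in the proof of Theorem \ref{ch} (observing that it is a full generalized weak set algebra, hence atomic), obtains the game implications from Lemma \ref{n}, and reverses all arrows under the countability hypothesis via the second part of Theorem \ref{ch} together with Theorem \ref{rep}. The only step you assert without justification is that $\Nr_n\D$ is atomic when the dilation $\D$ is: this requires $\Nr_n\D\subseteq_c\D$, a fact the paper proves inside the proof of Theorem \ref{pa} (the argument there is purely about cylindrifiers and works verbatim for $\CA$s), so your argument goes through once that is cited.
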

We note that {\it not all} of the above implications can be reversed as shown in the last item of the coming theorem \ref{rainbow}, see also theorem \ref{pa} 
addressing {\it atomic} $\omega$--dilations.
In the first item of the next theorem we generalize the main result 
in \cite{HH}. The latter result shows that the class of completely representable $\CA_n$s, for $2<n<\omega$ 
is not elementary.

To formulate 
and prove the next theorem, we need to fix some notation.
$\bold S_d$ is the operation of forming dense subalgbras. 
For $\A\in \CA_n$, $n\geq 3$, $\Ra\A$ is the relation algebra reduct of 
$\A$ as defined in \cite[Definition 5.3.7]{HMT2}. 

For relation algebras we follow the terminology of \cite{HHbook} with a single deviation. We denote the {\it identity relation} by $\sf Id$ 
rather than $1'$. In particular, $\sf RA$ denotes the class of relation algebras and $\sf (C)RRA$ denotes the class of (completely) representable 
$\sf RA$s. For $n\geq 4$ 
and $\A\in \CA_n$, $\Ra\A\in \sf RA$ \cite[Theorem 5.3.8]{HMT2}.
For $\sf K\subseteq \CA_n$, ${\sf Ra}\sf K$ denotes the class $\{\Ra\A: \A\in {\sf K}\}$.

For a class $\K$, we let ${\bf El}\K$ denote the elementary closure of 
$\sf K$, that is, the smallest
elementary class containing $\K$. For a class $\sf K$ of $\sf BAO$s, we write 
$\At\K$ for $\{\At\A: \A\in \K\cap \bf At\}$. 
Let $2<n<m$. Consider the class $\bold N_m=\{\A\in \CA_n\cap {\bf At}: \Cm{\bf At}\A\in {\sf Nr}_n\CA_m\}$. 
We will see that   
$\bold N_m\neq {\sf Nr}_n\CA_m$ by item (3)
of the forthcoming theorem \ref{rainbow}.

Two other closely related (but not identical) classes are 
$\bold C_m=\{\A\in \CA_n\cap {\bf At}: \Cm{\At}\A\in \bold S_c{\sf Nr}_n\CA_m\}$ and $\bold C_m^{\sf at}=\{\A\in \CA_n\cap {\bf At}: \Cm{\bf At}\A\in \bold S_c{\sf Nr}_n(\CA_m\cap {\bf At})\}$.
For the definitions of pseudo--elementary and pseudo--universal,  the reader is referred to \cite[Definition 9.1]{HHbook}.
It known that if $\bold K$ is pseudo--universal $\implies \bold K$ is elementary and closed under $\bold S$, cf. \cite[Chapter 10]{HHbook} for an extensive overview of such notions. 

\begin{theorem}\label{rainbow}
Let $2<n<\omega$ and let $k\geq 3$.
\begin{enumarab} 
\item For any class $\bold K$,
such that ${\sf CRCA}_n\cap {\bf S}_c{\sf Nr}_n\CA_{\omega}\subseteq \bold K\subseteq \bold S_c{\sf Nr}_n\CA_{k}$, 
$\bold K$ is not elementary. In particular, the class
${\sf CRCA}_n$ is not elementary \cite{HH}. Furthermore, the classes $\bold C_k$ and $\bold C_k^{\sf at}$ are not elementary.
\item For any class $\bold K$,
such that ${\sf CRCA}_n\cap {\bf S}_d{\sf Nr}_n\CA_{\omega}\subseteq \bold K\subseteq \bold S_c{\sf Nr}_n\CA_{k}$, 
$\bold K$ is not elementary.
Furthemore, any class $\bold L$ such that $\At({\sf Nr}_n\CA_{\omega})\subseteq \bold L\subseteq \bold \At(\bold S_c{\sf Nr}_n\CA_{n+3})$ 
and the class $\bold N_k$ are not elementary. Finally, ${\bf El}{\sf Nr}_n\CA_{\omega}\nsubseteq {\bf S}_d{\sf Nr}_n\CA_{\omega}\iff$ 
any class $\bold L$ such that  ${\sf Nr}_n\CA_{\omega}\subseteq \bold L\subseteq \bold \bold S_c{\sf Nr}_n\CA_{n+3}$, $\bold L$ is not elementary.

\item Let $\alpha$ be any ordinal $>1$.
Then for every infinite cardinal $\kappa\geq |\alpha|$, there exist completely representable algebras
$\B, \A\in \CA_{\alpha}$, that are weak set algebras, such that $\At\A=\At\B$, $|\At\B|=|\B|=\kappa$,
$\B\notin {\bf El}{\sf Nr}_{\alpha}\CA_{\alpha+1}$,
$\A\in {\sf Nr}_{\alpha}\CA_{\alpha+\omega}$,  and $\Cm\At\B=\A$,
so that $|\A|=2^{\kappa}$. In particular,
${\sf Nr}_{\alpha}\CA_{\beta}\subsetneq \bold S_d{\sf Nr}_{\alpha}\CA_{\beta}$. 

\item The classes  ${\sf CRCA}_n$ and ${\sf Nr}_n\CA_m$ for $n<m$ are pseudo--elementary but not elementary, nor pseudo--universal. 
Furthermore, their elementary 
theory is recursively enumerable. For any $n<m$, the class ${\sf Nr}_n\CA_m$ 
is not closed under $L_{\infty,\omega}$ equivalence.
\item There is an atomic $\R\in {\sf Ra}\CA_{\omega}\cap {\bf El}\sf CRRA$ that is not completely representable.
Also, there is an  atomic algebra  $\A\in {\sf Nr}_{n}\CA_{\omega}\cap {\bf El}{\sf  CRCA_n}$, 
that is not completely representable. 
In particular, both $\sf CRRA$ and  ${\sf CRCA}_n$ are not elementary \cite{HH}.
\end{enumarab}
\end{theorem}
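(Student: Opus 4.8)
All five items are driven by one rainbow atom structure fed into the three tools already in hand: the criterion (*) (equivalently the first sentence of Lemma~\ref{n}), the game characterisation of complete representability in Theorem~\ref{rep}, and the identity ${\sf CRCA}_n=\bold S_c{\sf Nr}_n\CA_\omega$ on countably many atoms from Theorem~\ref{ch}. The plan is first to build a countable, atomic, complete rainbow algebra $\C=\Cm\At\C$, so that $\C$ is its own \d\ completion, whose parameters $\sf G$ (greens) and $\sf R$ (reds) are tuned so that \emph{(i)} \pa\ has a \ws\ in the truncated game $F^{n+3}(\At\C)$, using only $n+3$ nodes and bombarding \pe\ with cones of distinct green tints until she is forced onto a red clique, while \emph{(ii)} \pe\ has a \ws\ in $G_k(\At\C)$ for every finite $k$, lifted from a \ws\ for \pe\ in the finite-round ${\sf EF}$ forth game on $(\sf G,\sf R)$. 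By the contrapositive of the $\bold S_c$ half of (*), property (i) yields $\C\notin\bold S_c{\sf Nr}_n\CA_{n+3}$; by Theorem~\ref{rep}, property (ii) produces a countable completely representable $\B$ with $\C\equiv\B$, obtained by passing to a non-principal ultrapower (where \pe\ wins the $\omega$-round game) and extracting a countable elementary subalgebra via \ls.

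For items (1) and (2) this becomes a squeezing argument. As $\B$ is completely representable and countable, Theorem~\ref{ch} puts it in ${\sf CRCA}_n\cap\bold S_c{\sf Nr}_n\CA_\omega$, i.e.\ inside the prescribed lower bound; and since $\bold S_c{\sf Nr}_n\CA_{n+3}$ is the \emph{largest} of the classes $\bold S_c{\sf Nr}_n\CA_m$ with $m\geq n+3$, every admissible $\bold K$ lies below it, so $\C\notin\bold K$ while $\B\in\bold K$ and $\C\equiv\B$; hence $\bold K$ is not elementary, the case $\bold K={\sf CRCA}_n$ recovering \cite{HH}. The companion classes $\bold C_{n+3}$, $\bold C_{n+3}^{\sf at}$, $\bold N_{n+3}$ and the atom-structure classes $\bold L$ use the same pair once one notes that $\C=\Cm\At\C$ forces exclusion, whereas $\Cm\At\B$ is completely representable with countably many atoms and (by the corollary to Theorem~\ref{ch}) carries an \emph{atomic} $\omega$-dilation, placing it on the membership side. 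For the $\bold S_d$ lower bound one chooses $\B$ to be the countable term algebra on its atom structure, dense in its complex algebra. The concluding biconditional of item (2) is the one conditional point: because its lower bound ${\sf Nr}_n\CA_\omega$ is strictly smaller than ${\sf CRCA}_n$, the unconditional squeeze no longer applies, and non-elementarity of all such $\bold L$ is equivalent, by a short manipulation with elementary closures, to the failure of ${\bf El}{\sf Nr}_n\CA_\omega\subseteq\bold S_d{\sf Nr}_n\CA_\omega$.

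Item (3) needs a genuinely different, uncountable witness, built by a Monk/rainbow blow-up over an index set of size $\kappa$: an atomic completely representable weak set algebra $\B$ with $|\B|=|\At\B|=\kappa$ whose atom structure is controlled tightly enough that $\A:=\Cm\At\B\in{\sf Nr}_\alpha\CA_{\alpha+\omega}$ is a genuine (completely representable) neat reduct with $|\A|=2^\kappa$, while $\B$ itself escapes even ${\bf El}{\sf Nr}_\alpha\CA_{\alpha+1}$; the latter is read off from a first-order property that every neat reduct of one extra dimension must satisfy but which the uncountable atom structure of $\B$ violates. Since $\B\subseteq_d\A$, this gives ${\sf Nr}_\alpha\CA_\beta\subsetneq\bold S_d{\sf Nr}_\alpha\CA_\beta$, and feeds the last clause of item (4): the construction makes $\A$ and $\B$ $\equiv_{\infty,\omega}$-equivalent (they share the atom structure and admit a Karp back-and-forth system), with $\A\in{\sf Nr}_n\CA_m$ but $\B\notin{\sf Nr}_n\CA_m$ (as ${\sf Nr}_n\CA_m\subseteq{\sf Nr}_n\CA_{n+1}\subseteq{\bf El}{\sf Nr}_n\CA_{n+1}$), so ${\sf Nr}_n\CA_m$ is not closed under $L_{\infty,\omega}$-equivalence. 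The cylindric witness of item (5) is an uncountable atomic $\A\in{\sf Nr}_n\CA_\omega$ of the same flavour: being a neat reduct it satisfies the Lyndon conditions, hence lies in ${\bf El}{\sf CRCA}_n$, yet it is not completely representable because its uncountably many atoms obstruct an atomic representation — Theorem~\ref{rep} being unavailable here precisely because it assumes countably many atoms. The relation-algebra witness $\R\in{\sf Ra}\CA_\omega\cap{\bf El}\,\sf CRRA$ is the analogous rainbow object for $\sf RA$. Finally, the structural assertions of item (4): ${\sf CRCA}_n$ and ${\sf Nr}_n\CA_m$ are pseudo-elementary because each is the class of $n$-reducts of two-sorted models of an explicit recursive first-order theory (coding the $\CA_\omega$, resp.\ $\CA_m$, dilation, and for ${\sf CRCA}_n$ the representation, as a second sort), whence their elementary theories are recursively enumerable; they are non-elementary by items (1) and (5); and they cannot be pseudo-universal, since a pseudo-universal class is elementary and $\bold S$-closed.

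The main obstacle is the combinatorial core of the backbone: choosing the relational parameters $\sf G$ and $\sf R$ and verifying \emph{simultaneously} that \pa\ can force a red clique, hence win, in $F^{n+3}$ with only $n+3$ available nodes (governed by an ${\sf EF}$ forth game on $(\sf G,\sf R)$ with few pebbles), and that \pe\ survives arbitrarily many rounds of $G_k$ with unrestricted nodes; wringing these two opposed behaviours from a single atom structure, and then upgrading \pe's finite-round wins to an honest completely representable companion by ultrapowers and \ls, is where essentially all the work lies. The uncountable construction of item (3), where the delicate step is certifying $\Cm\At\B\in{\sf Nr}_\alpha\CA_{\alpha+\omega}$ while simultaneously keeping $\B$ out of ${\bf El}{\sf Nr}_\alpha\CA_{\alpha+1}$, is a second and largely independent source of difficulty.
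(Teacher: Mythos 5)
Your backbone for item (1) is essentially the paper's own argument (the paper's witness is the ordered rainbow-like algebra $\A_{\Z,\N}$, with an order-preserving condition tying greens to reds so that \pa\ forces a strictly decreasing sequence in $\N$ inside $F^{n+3}$), and your squeeze between ${\sf CRCA}_n\cap\bold S_c{\sf Nr}_n\CA_\omega$ and $\bold S_c{\sf Nr}_n\CA_{n+3}$ is correct. But item (2) has a genuine gap. Declaring ``choose $\B$ to be the countable term algebra, dense in its complex algebra'' does not put $\B$ in $\bold S_d{\sf Nr}_n\CA_{\omega}$: density in $\Cm\At\B$ helps only if $\Cm\At\B$ itself lies in ${\sf Nr}_n\CA_{\omega}$, and nothing in your toolkit delivers that. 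The corollary to Theorem \ref{ch} that you invoke gives only $\Cm\At\B\in\bold S_c{\sf Nr}_n(\CA_{\omega}\cap{\bf At})$, an $\bold S_c$-type membership; it yields neither $\Cm\At\B\in{\sf Nr}_n\CA_{\omega}$ (needed for the $\bold S_d$ lower bound and for $\bold N_k$) nor $\At\B\in\At({\sf Nr}_n\CA_{\omega})$ (needed for the classes $\bold L$). The paper's proof of item (2) runs through a strictly stronger game $H$, played on $\lambda$-neat hypernetworks with transformation and amalgamation moves, a verification that \pe\ wins $H_k(\At\A_{\Z,\N})$ for all finite $k$, and then a dilation $\D=\bold P_{a}\D_a$ whose factors are weak set algebras of $L_{\infty,\omega}$-formulas over the limit weak models $\M_a$: the constant label $\lambda$ on short hyperedges is what makes the finite maps between limit models partial isomorphisms, and the infinitary disjunctions are what make $\D$ and $\Nr_n\D$ complete, giving $\Cm\At\B\cong\Nr_n\D\in{\sf Nr}_n\CA_{\omega}$. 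This machinery is the missing idea; without it the $\bold S_d$, $\bold N_k$ and $\bold L$ clauses are unproved (and indeed Theorem \ref{stronger} shows even a \ws\ in $H_{\omega}$ does not yield $\B\in{\sf Nr}_n\CA_{\omega}$, so the distinction you are eliding is exactly where the content lies).

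Your witnesses for items (3)--(5) also fail. For (3) the paper does not use any Monk/rainbow blow-up (rainbow games are finite-dimensional, while (3) covers every ordinal $\alpha>1$ and every $\kappa\geq|\alpha|$); it takes $\A=\wp(V)$ for $V$ the weak space of finitely-nonzero sequences in a field $\F$ of characteristic $0$ with $|\F|=\kappa$, and $\B=\Sg^{\A}\{y,y_s:s\in y\}$, importing $\B\notin{\bf El}{\sf Nr}_{\alpha}\CA_{\alpha+1}$ from \cite{SL}. Worse, feeding this pair into the $L_{\infty,\omega}$ clause of item (4) is internally inconsistent: since $\A\in{\sf Nr}_{\alpha}\CA_{\alpha+\omega}\subseteq{\sf Nr}_{\alpha}\CA_{\alpha+1}$, any $\B\equiv\A$ would lie in ${\bf El}{\sf Nr}_{\alpha}\CA_{\alpha+1}$, contradicting the exclusion you built in --- so the item-(3) pair is not even elementarily equivalent, let alone $\equiv_{\infty,\omega}$; and sharing an atom structure never by itself yields a Karp system (the pair $\Tm{\bf At}$, $\Cm{\bf At}$ of Theorem \ref{can} shares an atom structure yet fails even $\equiv$, as $\RCA_n$ is elementary). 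The paper instead uses the modified algebras of \cite[Theorem 5.1.4]{Sayedneat}, where $\B\notin{\sf Nr}_n\CA_{n+1}$ (but $\B$ \emph{is} in ${\bf El}{\sf Nr}_n\CA_{n+1}$) is detected by the non-term-definable substitution ${}_{n}{\sf s}(0,1)$ via a cardinality twist, while an explicit \ef\ back-and-forth with cardinality bookkeeping on the partitions of $\bold 1_{Id}$ gives $\A\equiv_{\infty}\B$. Finally, in item (5) your proposed mechanism ``uncountably many atoms obstruct an atomic representation'' is simply false ($\wp({}^nU)$ with $U$ uncountable is completely representable), and an algebra ``of the same flavour'' as item (3) cannot work since both algebras there \emph{are} completely representable; the paper's witness is $\B=\Nr_n\Ca(H)$ for Hirsch's relation algebra of \cite[Remark 31]{r}, which has an $\omega$-dimensional cylindric basis $H$ but no complete representation, after which your (correct) ultrapower/elementary-chain route does place $\B$ in ${\bf El}{\sf CRCA}_n$.
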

\begin{proof}
(1) \cite{mlq} Fix finite $n>2$. One takes an algebra $\A_{\Z, \N}$ based on the ordered structure $\Z$ and $\N$, that is similar to the rainbow algebra 
$\CA_{\Z, \N}$ but not identical. The rainbow colours (signatures) are the same.
In particular, the reds ${\sf R}$ constitute the set $\{\r_{ij}: i<j<\omega(=\N)\}$ and the green colours used 
constitute the set $\{\g_i:1\leq i <n-1\}\cup \{\g_0^i: i\in \Z\}$. 
In complete coloured graphs the forbidden triples are like 
in $\CA_{\Z, \N}$   
but now  the additional triple  $(\g^i_0, \g^j_0, \r_{kl})$ is also forbidden if $\{(i, k), (j, l)\}$ is {\it not an order preserving partial function} from
$\Z\to\N$.
For the sake of brevity, we  write $\C$ for $\A_{\Z, \N}$ throughout the whole 
proof. 
Then \pe\ has a \ws\ $\rho_k$ in the $k$--rounded game $G_k(\At\C)$ for all $k\in \omega$ \cite{mlq}.
Hence, using ultrapowers and an elementary chain argument  \cite[Corollary 3.3.5]{HHbook2}, one gets a countable algebra $\B$ 
such that $\B\equiv \A$, and  \pe\ has a \ws\ in $G_{\omega}(\At\B)$. 

The reasoning is as follows: We can assume that $\rho_k$ is deterministic. 
Let $\D$ be a non--principal ultrapower of $\C$.  Then \pe\ has a \ws\ $\sigma$ in $G_{\omega}(\D)$ --- essentially she uses
$\rho_k$ in the $k$'th component of the ultraproduct so that at each
round of $G_{\omega}(\D)$,  \pe\ is still winning in co--finitely many
components, this suffices to show she has still not lost. We can assume that $\C$ is countable by replacing it, without loss, by $\Tm\At\C$.
Winning strategies are preserved. Now one can use an
elementary chain argument to construct countable elementary
subalgebras $\C=\A_0\preceq\A_1\preceq\ldots\preceq\ldots \D$ in this manner.
One defines  $\A_{i+1}$ to be a countable elementary subalgebra of $\D$
containing $\A_i$ and all elements of $\D$ that $\sigma$ selects
in a play of $G_{\omega}(\D)$ in which \pa\ only chooses elements from
$\A_i$. Now let $\B=\bigcup_{i<\omega}\A_i$.  This is a
countable elementary subalgebra of $\D$, hence $\B\equiv \C$, because $\C\equiv \D$,  
and clearly \pe\ has a \ws\ in
$G_{\omega}(\B)$. Then $\B$ 
is completely representable by \cite[Theorem 3.3.3]{HHbook2}. 

On the other hand, one can show that \pa\ has a \ws\ in $F^{n+3}(\At\C)$.
The idea here, is that, as is the case with \ws's of \pa\ in rainbow constructions, 
\pa\ bombards \pe\ with cones having distinct green tints demanding a red label from \pe\ to appexes of succesive cones.
The number of nodes are limited but \pa\ has the option to re-use them, so this process will not end after finitely many rounds.
The added order preserving condition relating two greens and a red, forces \pe\ to choose red labels, one of whose indices form a decreasing 
sequence in $\N$.  In $\omega$ many rounds \pa\ 
forces a win, 
so $\C\notin \bold S_c{\sf Nr}_n\CA_{n+3}$.
He plays as follows: In the initial round \pa\ plays a graph $M$ with nodes $0,1,\ldots, n-1$ such that $M(i,j)=\w_0$
for $i<j<n-1$
and $M(i, n-1)=\g_i$
$(i=1, \ldots, n-2)$, $M(0, n-1)=\g_0^0$ and $M(0,1,\ldots, n-2)=\y_{\Z}$. This is a $0$ cone.
In the following move \pa\ chooses the base  of the cone $(0,\ldots, n-2)$ and demands a node $n$
with $M_2(i,n)=\g_i$ $(i=1,\ldots, n-2)$, and $M_2(0,n)=\g_0^{-1}.$
\pe\ must choose a label for the edge $(n+1,n)$ of $M_2$. It must be a red atom $r_{mk}$, $m, k\in \N$. Since $-1<0$, then by the `order preserving' condition
we have $m<k$.
In the next move \pa\ plays the face $(0, \ldots, n-2)$ and demands a node $n+1$, with $M_3(i,n)=\g_i$ $(i=1,\ldots, n-2)$,
such that  $M_3(0, n+2)=\g_0^{-2}$.
Then $M_3(n+1,n)$ and $M_3(n+1, n-1)$ both being red, the indices must match.
$M_3(n+1,n)=r_{lk}$ and $M_3(n+1, r-1)=r_{km}$ with $l<m\in \N$.
In the next round \pa\ plays $(0,1,\ldots n-2)$ and re-uses the node $2$ such that $M_4(0,2)=\g_0^{-3}$.
This time we have $M_4(n,n-1)=\r_{jl}$ for some $j<l<m\in \N$.
Continuing in this manner leads to a decreasing
sequence in $\N$.
Let $k\geq 3$ and let $\bold K$ be as in the statement. Then $\C\notin \bold K$, $\B\in \bold K\cap {\sf CRCA}_n$ and 
$\C\equiv \B$, we are done. 
$\bold C_k^{\sf at}=\bold S_c{\sf Nr}_n(\CA_k\cap {\bf At})$, hence by the above it is not elementary.

For non--elementarity of $\bold C_k$, we have $\C\equiv \B$, $\C\notin \bold S_c{\sf Nr}_n\CA_k$ and $\B$ is completely representable, 
hence it is $\bold S_c{\sf Nr}_n\CA_{\omega}$.

(2) We first give the general idea. Let $\C=\A_{\Z, \N}$ be as defined in the previous item.
One can (and will) define a $k$--rounded atomic game stronger than $G_k$ call it $H_k$, for $k\leq \omega$,  
so that if $\B\in \CA_n$ is countable and atomic and \pe\ has a \ws\ in $H_{\omega}(\At\B)$, 
then 
(*) $\At\B\in \At{\sf Nr}_n\CA_{\omega}$ and $\Cm\At\B\in {\sf Nr}_n\CA_{\omega}$. 
One shows that \pe\ has a \ws\ in $H_k(\At\C)$ for all 
$k\in \omega$, hence using ultrapowers and an elementary chain argument, we get that $\C\equiv \B$, 
for some countable completely representable $\B$ that satisfies the two conditions in (*).
Since $\B\subseteq_d \Cm\At\B$, we get the required result, because $\B\in \bold S_d{\sf Nr}_n\CA_{\omega}$
and as before $\C\notin \bold S_c{\sf Nr}_n\CA_{n+3}$ and $\C\equiv \B$. 
Now we prove the second part. Let $\bold L$ be as specified and $\B$ and $\C(=\A_{\Z, \N}$) be the algebras constructed above. Since an 
atom structure of an algebra is first order interpretable in the algebra, then we have $\B\equiv \C\implies \At\B\equiv \At\C$.
Furthermore $\At\B\in \At({\sf Nr}_n\CA_{\omega})\subseteq \bold L$ (though $\B$ might not be in ${\sf Nr}_n\CA_{\omega}$, cf. the next item) 
and $\At\C\notin  \At(\bold S_c{\sf Nr}_n\CA_{n+3})\supseteq \bold L$. 
The last part follows from the fact that
if $\D\in \CA_n$ is atomic, 
then $\At\D\in   \At(\bold S_c{\sf Nr}_n\CA_{n+3})\iff \D\in \bold S_c{\sf Nr}_n\CA_{n+3}$. 
We conclude that $\bold L$ is not elementary.

We define the game $H$.
But first some preparation.
Fix $2<n<\omega$.

For an $n$--dimensional atomic network on an atomic $\CA_n$ and for  $x,y\in \nodes(N)$, we set  $x\sim y$ if
there exists $\bar{z}$ such that $N(x,y,\bar{z})\leq {\sf d}_{01}$.
Define the  equivalence relation $\sim$ over the set of all finite sequences over $\nodes(N)$ by
$\bar x\sim\bar y$ iff $|\bar x|=|\bar y|$ and $x_i\sim y_i$ for all
$i<|\bar x|$. (It can be easily checked that this indeed an equivalence relation).

A \emph{ hypernetwork} $N=(N^a, N^h)$ over an atomic $\CA_n$
consists of an $n$--dimensional  network $N^a$
together with a labelling function for hyperlabels $N^h:\;\;^{<
\omega}\!\nodes(N)\to\Lambda$ (some arbitrary set of hyperlabels $\Lambda$)
such that for $\bar x, \bar y\in\; ^{< \omega}\!\nodes(N)$
if $\bar x\sim\bar y \Rightarrow N^h(\bar x)=N^h(\bar y).$
If $|\bar x|=k\in \N$ and $N^h(\bar x)=\lambda$, then we say that $\lambda$ is
a $k$-ary hyperlabel. $\bar x$ is referred to as a $k$--ary hyperedge, or simply a hyperedge.
We may remove the superscripts $a$ and $h$ if no confusion is likely to ensue.

A hyperedge $\bar{x}\in {}^{<\omega}\nodes(N)$ is {\it short}, if there are $y_0,\ldots, y_{n-1}$
that are nodes in $N$, such that
$N(x_i, y_0, \bar{z})\leq {\sf d}_{01}$
or $\ldots N(x_i, y_{n-1},\bar{z})\leq {\sf d}_{01}$
for all $i<|x|$, for some (equivalently for all)
$\bar{z}.$
Otherwise, it is called {\it long.}
A hypernetwork $N$
is called {\it $\lambda$--neat} if $N(\bar{x})=\lambda$ for all short hyperedges.

Concerning \pa's  moves, $H_m$ has  $m$ rounds, $m\leq \omega$.  
He can play a cylindrifier move, like before but now played on $\lambda$---neat hypernetworks
with $\lambda$ a constant label on short hyperedges.
Also \pa\ can play a \emph{transformation move} by picking a
previously played $\lambda$--neat hypernetwork $N$ and a partial, finite surjection
$\theta:\omega\to\nodes(N)$, this move is denoted $(N, \theta)$.  \pe's
response is mandatory. She must respond with $N\theta$.
Finally, \pa\ can play an
\emph{amalgamation move} by picking previously played $\lambda$--neat hypernetworks
$M, N$ such that
$M\restr {\nodes(M)\cap\nodes(N)}=N\restr {\nodes(M)\cap\nodes(N)},$
and $\nodes(M)\cap\nodes(N)\neq \emptyset$.
This move is denoted $(M,
N).$
To make a legal response, \pe\ must play a $\lambda$--neat
hypernetwork $L$ extending $M$ and $N$, where
$\nodes(L)=\nodes(M)\cup\nodes(N)$.
We claim that  \pe\ has a \ws\ in $H_{m}(\At\CA_{\Z, \N})$ for each finite $m$.
The analogous proof for relation algebras is rather long
\cite[p.25--31]{r}. We assume that the 
claim is true and  take it from there.

Using the usual technique of forming ultrapowers followed by 
an elementary  chain argument, we get that there exists a countable (completely representable) algebra, which we continue to denote by a slight abuse of notation
also $\B$, such that $\A_{\Z, \N}\equiv \B$, and  \pe\ has a \ws\ on $H(\At\B)$. For brevity, let $\alpha=\At\B$.
Using \pe's \ws\ in $H$, one builds an $\omega$--dilation $\D_a$ of $\B$
for every $a\in \At\B$, based on a structure $\M_a$ in some signature to be specified shortly. 
Strictly speaking, $\M_a$ will be a {\it weak model}, 
where assignments are {\it relativized}, they are required to agree 
co--finitely with a fixed sequence in $^{\omega}\M_a$.
This weak model $\M_a$ is taken in a signature $L$ consisting of one $n$--ary relation for each $b\in \At\B$ and 
a $k$--ary relation symbol  for each hyperedge of length $k$ labelled by $\lambda$ the constant neat hyperlabel. 

For $a\in \At\B$, the weak model $\M_a$ is the limit of the play $H_{\omega}$; in the sense that $\M_a$ is the union of the $\lambda$--neat 
hypernetworks on $\B$ played during the game $H_{\omega}$, with starting point the initial atom $a$ that \pa\ chose in the first move.  
Labels for the edges and hyperedges in $\M_a$ 
are defined the obvious way,  inherited from the $\lambda$--neat hypernetworks played during the $\omega$--rounderd game $H_{\omega}(\At\B)$. 
These are nested, so this labelling is well 
defined giving an 
interpretation of {\it only} the atomic formulas of $L$ in $\M_a$.  
There is some space here in `completing' the interpretation. One uses 
an extension $\L$, not necessarily a proper one, of  $L_{\omega, \omega}$ as a vehicle for constructing 
$\D_a$.  The algebra $\D_a$ will  be a  {\it weak set algebra} based on $\M_a$ of $\L$--formulas taken in the signature $L$. That is the 
base in the sense of \cite[Definition 3.1.1]{HMT2} of $\D_a$ is $\M_a$, 
and the set--theoretic operations of $\D_a$ are read off the connectives in $\L$. 
 In all cases, as long as $\L$ contains $L_{\omega, \omega}$ as a fragment, we get that $\B$ neatly embeds into $\D$, that is $\B\subseteq \Nr_n\D$, where 
$\D={\bf P}_{a\in \At\B}\D_a$.  
There  are three possibilites measuring `how close' $\B$ is to $\Nr_n\D$. We go from the closest to the less close. 
Either (a) $\B=\Nr_n\D$  or (b) $\B\subseteq_d \Nr_n\D$ or  (c) $\B\subseteq_c \Nr_n\D$. From the first part, building 
$\D$ using the weaker game $G$ used in the proof of the previous item, we can get the last possibility. It is reasonable to expect that the stronger $\L$ is, the 
`more  control' $\At\B$ has over the hitherto obtained $\omega$--dilation $\D$; the closer $\B$ 
is  to the neat $n$--reduct of $\D$ based on $\L$-formulas. 
If (a) is true than any $\bold K$ between ${\sf Nr}_n\CA_{\omega}\cap \sf CRCA_n$ and $\bold S_c{\sf Nr}_n\CA_{n+3}$ would be non--elementary.
{\it We could not prove (a)}.
So let us approach the two remaining possibilities (b) and (c).
Suppose we take  $\L=L_{\infty, \omega}$. Then using the fact that in the $\lambda$--neat hypernetworks played during the game $H_{\omega}$ 
short hyperedges are constantly
labelled by $\lambda$, one can show that $\B$ and $\Nr_n\D$ have {\it isomorphic 
atom structures}, in symbols $\At\B\cong \At\Nr_n\D$ as follows.
For brevity, denote the hitherto obtained $\At\B$ by $\alpha$.

Fix some $a\in\alpha$. Using \pe\ s \ws\ in the game $H(\alpha)$ played on $\lambda$--neat hypernetworks $\lambda$ a constant label kept on short hyperedges,
one defines a
nested sequence $M_0\subseteq M_1,\ldots$ of $\lambda$--neat hypernetworks
where $M_0$ is \pe's response to the initial \pa-move $a$, such that:
If $M_r$ is in the sequence and $M_r(\bar{x})\leq {\sf c}_ia$ for an atom $a$ and some $i<n$,
then there is $s\geq r$ and $d\in\nodes(M_s)$
such that  $M_s(\bar{y})=a$,  $\bar{y}_i=d$ and $\bar{y}\equiv_i \bar{x}$.
In addition, if $M_r$ is in the sequence and $\theta$ is any partial
isomorphism of $M_r$, then there is $s\geq r$ and a
partial isomorphism $\theta^+$ of $M_s$ extending $\theta$ such that
$\rng(\theta^+)\supseteq\nodes(M_r)$ (This can be done using \pe's responses to amalgamation moves).

Now let $\M_a$ be the limit of this sequence, that is $\M_a=\bigcup M_i$, the labelling of $n-1$ tuples of nodes
by atoms, and hyperedges by hyperlabels done in the obvious way.
Let $L$ be the signature with one $n$-ary relation for
each $b\in\alpha=\At\B$, and one $k$--ary predicate symbol for
each $k$--ary hyperlabel $\lambda$.
{\it Now we work in $L_{\infty, \omega}.$}
For fixed $f_a\in\;^\omega\!\nodes(\M_a)$, let
$\U_a=\set{f\in\;^\omega\!\nodes(\M_a):\set{i<\omega:g(i)\neq
f_a(i)}\mbox{ is finite}}$.
Now we  make $\U_a$ into the base of an $L$ relativized structure 
${\cal M}_a$ like in \cite[Theorem 29]{r} except that we allow a clause for infinitary disjunctions.
In more detail,  for $b\in\alpha,\; l_0, \ldots, l_{n-1}, i_0 \ldots, i_{k-1}<\omega$, \/ $k$--ary hyperlabels $\lambda$,
and all $L$-formulas $\phi, \phi_i, \psi$, and $f\in U_a$:
\begin{eqnarray*}
{\cal M}_a, f\models b(x_{l_0}\ldots,  x_{l_{n-1}})&\iff&{\cal M}_a(f(l_0),\ldots,  f(l_{n-1}))=b,\\
{\cal M}_a, f\models\lambda(x_{i_0}, \ldots,x_{i_{k-1}})&\iff&  {\cal M}_a(f(i_0), \ldots,f(i_{k-1}))=\lambda,\\
{\cal M}_a, f\models\neg\phi&\iff&{\cal M}_a, f\not\models\phi,\\
{\cal M}_a, f\models (\bigvee_{i\in I} \phi_i)&\iff&(\exists i\in I)({\cal M}_a,  f\models\phi_i),\\
{\cal M}_a, f\models\exists x_i\phi&\iff& {\cal M}_a, f[i/m]\models\phi, \mbox{ some }m\in\nodes({\cal M}_a).
\end{eqnarray*}
We are now
working with (weak) set algebras  whose semantics is induced by $L_{\infty, \omega}$ formulas in the signature $L$,
instead of first order ones.
For any such $L$-formula $\phi$, write $\phi^{{\cal M}_a}$ for
$\set{f\in\U_a: {\cal M}_a, f\models\phi}.$
Let
$D_a= \set{\phi^{{\cal M}_a}:\phi\mbox{ is an $L$-formula}}$ and
$\D_a$ be the weak set algebra with universe $D_a$. 
Let $\D=\bold P_{a\in \alpha} \D_a$. Then $\D$ is a {\it generalized weak set algebra} \cite[Definition 3.1.2 (iv)]{HMT2}.
Let $x\in \D$. Then $x=(x_a:a\in\alpha)$, where $x_a\in\D_a$.  For $b\in\alpha$ let
$\pi_b:\D\to \D_b$ be the projection map defined by
$\pi_b(x_a:a\in\alpha) = x_b$.  Conversely, let $\iota_a:\D_a\to \D$
be the embedding defined by $\iota_a(y)=(x_b:b\in\alpha)$, where
$x_a=y$ and $x_b=0$ for $b\neq a$.  

We show that $\alpha\cong \At\Nr_n\D$ and that $\Cm\alpha\cong \Nr_n\D$. 
The argument used is like the argument used in \cite[Theorem 39]{r} adapted to $\CA$s.
Suppose $x\in\Nr_n\D\setminus\set0$.  Since $x\neq 0$,
then it has a non-zero component  $\pi_a(x)\in\D_a$, for some $a\in \alpha$.
Assume that $\emptyset\neq\phi(x_{i_0}, \ldots, x_{i_{k-1}})^{\D_a}= \pi_a(x)$, for some $L$-formula $\phi(x_{i_0},\ldots, x_{i_{k-1}})$.  We
have $\phi(x_{i_0},\ldots, x_{i_{k-1}})^{\D_a}\in\Nr_{n}\D_a$.
Pick
$f\in \phi(x_{i_0},\ldots, x_{i_{k-1}})^{\D_a}$  
and assume that ${\cal M}_a, f\models b(x_0,\ldots x_{n-1})$ for some $b\in \alpha$.
We show that
$b(x_0, x_1, \ldots, x_{n-1})^{\D_a}\subseteq
 \phi(x_{i_0},\ldots, x_{i_{k-1}})^{\D_a}$.  
Take any $g\in
b(x_0, x_1\ldots, x_{n-1})^{\D_a}$,
so that ${\cal M}_a, g\models b(x_0, \ldots x_{n-1})$.  
The map $\{(f(i), g(i)): i<n\}$
is a partial isomorphism of ${\cal M}_a.$ Here that short hyperedges are constantly labelled by $\lambda$ 
is used.
This map extends to a finite partial isomorphism
$\theta$ of $M_a$ whose domain includes $f(i_0), \ldots, f(i_{k-1})$.
Let $g'\in {\cal M}_a$ be defined by
\[ g'(i) =\left\{\begin{array}{ll}\theta(i)&\mbox{if }i\in\dom(\theta)\\
g(i)&\mbox{otherwise}\end{array}\right.\] 
We have ${\cal M}_a,
g'\models\phi(x_{i_0}, \ldots, x_{i_{k-1}})$. But 
$g'(0)=\theta(0)=g(0)$ and similarly $g'(n-1)=g(n-1)$, so $g$ is identical
to $g'$ over $n$ and it differs from $g'$ on only a finite
set.  Since $\phi(x_{i_0}, \ldots, x_{i_{k-1}})^{\D_a}\in\Nr_{n}\D_a$, we get that
${\cal M}_a, g \models \phi(x_{i_0}, \ldots,
x_{i_k})$, so $g\in\phi(x_{i_0}, \ldots, x_{i_{k-1}})^{\D_a}$ (this can be proved by induction on quantifier depth of formulas).  
This
proves that 
$$b(x_0, x_1\ldots x_{n-1})^{\D_a}\subseteq\phi(x_{i_0},\ldots,
x_{i_k})^{\D_a}=\pi_a(x),$$ and so
$$\iota_a(b(x_0, x_1,\ldots x_{n-1})^{\D_a})\leq
\iota_a(\phi(x_{i_0},\ldots, x_{i_{k-1}})^{\D_a})\leq x\in\D_a\setminus\set0.$$
Now every non--zero element 
$x$ of $\Nr_{n}\D_a$ is above a non--zero element of the following form 
$\iota_a(b(x_0, x_1,\ldots, x_{n-1})^{\D_a})$
(some $a, b\in \alpha$) and these are the atoms of $\Nr_{n}\D_a$.  
The map defined  via $b \mapsto (b(x_0, x_1,\dots, x_{n-1})^{\D_a}:a\in \alpha)$ 
is an isomorphism of atom structures, 
so that $\alpha=\At\B\in \At{\sf Nr}_n\CA_{\omega}$.
Because we are working in $L_{\infty, \omega},$ infinite disjuncts exist in $\D_a$ $(a\in \alpha)$,
hence, they exist too in the dilation $\D=\bold P_{a\in\alpha}\D_a$.
Therefore $\D$ is complete, so $\Nr_n\D$ is complete, too.
Indeed, let  $X\subseteq \Nr_n\D$. Then by completeness of $\D$, we get that
$d=\sum^{\D}X$ exists.  Assume that  $i\notin n$, then
${\sf c}_id={\sf c}_i\sum X=\sum_{x\in X}{\sf c}_ix=\sum X=d,$
because the ${\sf c}_i$s are completely additive and ${\sf c}_ix=x,$
for all $i\notin n$, since $x\in \Nr_n\D$.
We conclude that $d\in \Nr_n\D$,
and so $\Nr_n\D$ is complete as claimed. 
Now $\D={\bf P}_{a\in \At\B}\D_a$  and 
its $n$--neat reduct $\Nr_n\D$ are complete. 
Accordingly, we can make the identification  
$\Nr_n\D\subseteq_d \Cm\At\B$.  By density,
we get that  $\Nr_n\D=\Cm\At\B$ (since $\Nr_n\B$ is complete),  
hence $\Cm\At\B\in {\sf Nr}_n\CA_{\omega}$.  

Using only $\Cm\At\B\in {\sf Nr}_n\CA_{\omega}$, 
we get  that $\B\in \bold S_d{\sf Nr}_n\CA_{\omega}$, 
because $\B$ is dense in its \de\ completion. 
Hence we attain the second possibility.
But it will now readily follows that any class $\bold K$, such that $\bold S_d{\sf Nr}_n\CA_{\omega}\cap {\sf CRCA_n}\subseteq \bold K\subseteq \bold S_c{\sf Nr}_n\CA_{n+3}$
is not elementary, where $\bold S_d$ denotes the operation of forming dense subalgebras.
Indeed, we have $\B\subseteq_d \Cm\At\B\in \bold {\sf Nr}_n\CA_{\omega}\cap \sf CRCA_n\subseteq \bold K$, 
$\C\notin \bold S_c{\sf Nr}_n\CA_{n+3}\supseteq \bold K$, and $\C\equiv \B$.

Non--first order definability of $\bold N_k$ follows from $\C\equiv \B$, 
$\Cm\At\B\in {\sf Nr}_n{\sf CA}_{\omega}$ and $\C\notin {\sf Nr}_n\CA_{\omega}(\supseteq \bold S_c{\sf Nr}_n\CA_{n+3}$). 
For the last part, It suffices to consider classes between ${\sf Nr}_n\CA_{\omega}$ and $\bold S_d{\sf Nr}_n\CA_{\omega}$. 
One implication, namely $\Longleftarrow$  is trivial. For the other less trivial implication, assume for contradiction that there is such a class $\bold K$ that is elementary.
Then ${\bf El}{\sf Nr}_n\CA_{\omega}\subseteq \bold K$, because $\bold K$ is elementary.
It readily follows that  ${\sf Nr}_n\CA_{\omega}\subseteq {\bf El}{\sf Nr}_n\CA_{\omega}\subseteq \bold K\subseteq \bold S_d{\sf Nr}_n{\sf CA}_{\omega}$,
which is impossible by the 
given assumption that ${\bf El}{\sf Nr}_n\CA_{\omega}\subsetneq \bold S_d{\sf Nr}_n{\sf CA}_{\omega}$.

(3)  Fix an infinite cardinal $\kappa\geq |\alpha|$.  Assume that $\alpha>1$.  Let $\F$ be field of characteristic $0$ such that $|\F|=\kappa$,
$V=\{s\in {}^{\alpha}\F: |\{i\in \alpha: s_i\neq 0\}|<\omega\}$ and let
${\A}$ have universe $\wp(V)$ with the usual concrete operations.
Then clearly $\wp(V)\in {\sf Nr}_{\alpha}\sf CA_{\alpha+\omega}$.
Let $y$ denote the following $\alpha$--ary relation:
$y=\{s\in V: s_0+1=\sum_{i>0} s_i\}.$
Let $y_s$ be the singleton containing $s$, i.e. $y_s=\{s\}.$

Let ${\B}=\Sg^{\A}\{y,y_s:s\in y\}.$ Clearly $|\B|=\kappa$.
Now $\B$ and $\A$ having same top element $V$, share the same atom structure, namely, the singletons, so $\B\subseteq_ d \A$ 
and $\Cm\At\B=\A$. As  proved in \cite{SL}, we have
$\B\notin {\bf  El}{\sf Nr}_{\alpha}{\sf CA}_{\alpha+1}$, hence 
$\B\in \bold S_d\sf Nr_\alpha\CA_{\beta}\sim \sf Nr_\alpha\CA_\beta$.

(4)  The class ${\sf CRCA}_n$ is not elementary by the proof of the first item, cf. \cite{HH},  hence it is not pseudo--univeral. It is also not 
closed under $\bold S$: Take any representable algebra that is not completely representable,  
for example an infinite algebra that is not atomic. Other atomic examples is the term algebra $\Tm \bf At$ dealt with in the proof of theorem \ref{can} 
and $\CA_{\Z, \N}$ dealt with above. The former  is not completely representable because a complete representation of $\Tm\bf At$ induces a representation of $\Cm\bf At$ 
which we know is outside 
$\bold S{\sf Nr}_n\CA_{n+3}$. 
Call such an elgebra $\A$. 
Then $\A^+$  is completely representable, a classical result of Monk's \cite{HH} and $\A$ embeds into $\A^+$.
For pseudo--elementarity one proceeds like the relation algebra case \cite[pp. 279--280]{HHbook} 
defining complete representability 
in a two--sorted theory, undergoing the obvious modifications.
For pseudo--elementarity  for the class ${\sf Nr}_n\CA_{\beta}$ for any $2<n<\beta$  one easily adapts \cite[Theorem 21]{r} by defining  ${\sf Nr}_n\CA_\beta$ 
in a two--sorted theory, when $1<n<\beta<\omega$, and a three--sorted one, when
$\beta=\omega$. The first part is easy.  For the second part; one uses a sort for a $\CA_n$
$(c)$, the second sort is for the Boolean reduct of a $\CA_n$ $(b)$
and the third sort for a set of dimensions $(\delta).$

For any infinite ordinal $\mu$, the defining theory for ${\sf Nr}_n\CA_{\mu}={\sf Nr}_n{\sf CA}_{\omega}$,
includes sentences requiring that the constants $i^{\delta}$ for $i<\omega$
are distinct and that the last two sorts define
a $\CA_\omega$. There is a function $I^b$ from sort $c$ to sort $b$ and sentences forcing  that $I^b$ is injective and
respects the $\CA_n$ operations. For example, for all $x^c$ and $i<n$,
$I^b({\sf c}_i x^c)= {\sf c}_i^b(I^b(x^c)).$ The last requirement is that $I^b$ maps {\it onto} the set of $n$--dimensional elements. This can  be easily expressed
via (*)
$$\forall y^b((\forall z^{\delta}(z^{\delta}\neq 0^{\delta},\ldots (n-1)^{\delta}\implies  c^b(z^{\delta}, y^b)=y^b))\iff \exists x^c(y^b=I^b(x^c))).$$
In all cases, it is clear that any algebra of the right type is the first sort of a model of this theory.
Conversely, a model for this theory will consist of  $\A\in \CA_n$  (sort $c$),
and a $\B\in \CA_{\omega}$;  the dimension of the last is the cardinality of
the $\delta$--sorted elements which is $\omega$, such that by (*) $\A=\Nr_n\B$.
Thus this three--sorted theory defines the class of neat reducts;
furthermore, it is clearly recursive. Recursive enumerability follows from \cite[Theorem 9.37]{HHbook}.

For non--elementarity: The algebras $\A$ and $\B$ constructed in \cite[Theorem 5.1.4]{Sayedneat} satisfy that
$\A\in {\sf Nr}_n\CA_{\omega}$, $\B\notin {\sf Nr}_n\CA_{n+1}$ and $\A\equiv \B$. As they stand, $\A$ and $\B$ are not atomic, but they can be modified 
to be so giving the same result, by interpreting the uncountably many tenary relations in the signature of 
$\M$ defined in \cite[Lemma 5.1.4]{Sayedneat}, which is the base of $\A$ and $\B$ 
to be {\it disjoint} in $\M$, not just distinct. This can be fixed. 
For $u\in {}^nn$, we briefly write $\bold 1_u$ for $\chi_u^{\M}$, denoted by $1_u$ (for $n=3$) in \cite[Theorem 5.1.4]{Sayedneat}. 
We work with $2<n<\omega$ instead of only $n=3$. The proof presented in {\it op.cit} lifts verbatim to any such $n$.
Write $V$ for $^nn$ and recall that $Id:n\to n$ for the identity function on $n$.
For each $w\in V$
the component $\B_w=\{x\in \B: x\leq \bold 1_w\}(\subseteq \A_w=\{x\in \A: x\leq \bold 1_w\}$)
contains infinitely many atoms.

For any $w\in V\setminus \{Id\}$, $\At\B_w=\At\A_w$
and $|\At\A_w|=|\At\B_w|=\kappa$, where $\kappa$ is the (uncountable) cardinality of the $n$--ary relation symbols in the signature.
For $\B$, $|\At\B_{Id}|=\omega$, but it is still an infinite set.
We show that \pe\ has a \ws\ in an \ef-game over $(\A, \B)$ concluding that $\A\equiv_{\infty}\B$.
At any stage of the game,
if \pa\ places a pebble on one of
$\A$ or $\B$, \pe\ must place a matching pebble,  on the other
algebra.  Let $\b a = \la{a_0, a_1, \ldots, a_{n-1}}$ be the position
of the pebbles played so far (by either player) on $\A$ and let $\b
b = \la{b_0, \ldots, b_{n-1}}$ be the position of the pebbles played
on $\B$.  \pe\ maintains the following properties throughout the
game.
\begin{itemize}
\item For any atom $x$ (of either algebra) with
$x\cdot \bold 1_{Id}=0$ then $x \in a_i\iff x\in b_i$.
\item $\b a$ induces a finite partion of $\bold 1_{Id}$ in $\A$ of $2^n$
 (possibly empty) parts $p_i:i<2^n$ and $\b b$ induces a partion of
 $\bold 1_{Id}$ in $\B$ of parts $q_i:i<2^n$.  $p_i$ is finite $\iff$ $q_i$ is
finite and, in this case, $|p_i|=|q_i|$.
\end{itemize}
We have proved that (the stronger) $\A\equiv_{\infty} \B$. 
Though $L_{\infty, \omega}$ does  not see `this cardinality twist' implemented by forcing $\B_{Id}$ to be countable, 
a suitably chosen term will.
Such a term is {\it not term definable in the language of
$\CA_{n}$}. It is the substitutution operator $_{n}{\sf s}(0, 1)$ (using one spare dimension) as defined in the proof of \cite[Theorem 5.1.4]{Sayedneat}.
The term $_{n}{\sf s}(0, 1)$ witnesses that $\B$ is not a neat reduct in the following sense.
Assume for contradiction that 
$\B=\Nr_{n}\C$, with $\C\in \CA_{n+1}.$ Let $u=(1, 0, 2,\ldots n-1)$. Then $\A_u=\B_u$
and so $|\B_u|>\omega$. The term  $_{n}{\sf s}(0, 1)$ acts like a substitution operator corresponding
to the transposition $[0, 1]$; it `swaps' the first two co--ordinates.
Now one can show that $_{n}{\sf s(0,1)}^{\C}\B_u\subseteq \B_{[0,1]\circ u}=\B_{Id},$ 
so $|_{n}{\sf s}(0,1)^{\C}\B_u|$ is countable because $\B_{Id}$ was forced by construction to be 
countable. But $_{n}{\sf s}(0,1)$ is a Boolean automorpism with inverse
$_{n}{\sf s}(1,0)$, so $|\B_u|=|_{n}{\sf s(0,1)}^{\C}\B_u|>\omega$, contradiction.

(5) For the last required fix finite $n>2$.
In \cite[Remark 31]{r} a relation atomic algebra $\R$ having uncountably many atoms 
is constructed such that $\R$ has an $\omega$--dimensional cylindric basis $H$ and $\R$ is not completely representable. If one takes $\C=\Ca(H)$, then $\C\in \CA_{\omega}$,
$\C$ is atomless, and $\R=\Ra\C$. The required $\CA_n$ is $\B=\Nr_n\C$; $\B$ is atomic and has uncountably many atoms. 
Furthermore, $\B$ has no complete representation for a complete representation of $\B$ induces one of $\R$.
We show that $\B$ is in  ${\bf El}\sf CRCA_n$. Since $\B\in {\sf Nr}_n\CA_{\omega}$, 
then  by lemma \ref{n}, \pe\ has a \ws\ in $G_{\omega}(\At\B)$, hence  \pe\ has a \ws\ in $G_k(\At\B)$ for all $k<\omega$.
Using ultrapowers and an elementary chain argument \cite[Corollary 3.3.5]{HHbook2}, we get
that $\B\equiv \D$, for some countable atomic $\D$, and \pe\ has a \ws\ in $G_{\omega}(\At\D)$. Since $\D$ is countable
then by \cite[Theorem 3.3.3]{HHbook2} it is completely representable. We have
proved that 
$\B\in {\bf El}{\sf CRK}_n$. Since $\B\notin {\sf CRCA}_n$, then ${\sf CRCA}_n$ is not elementary. 

For relation algebras we have $\R\in \sf Ra\CA_{\omega}$ and $\R$ has no complete representation. The rest is like the $\CA$ case, 
using \cite[Theorem 33]{r},  when the dilation is $\omega$--dimensional, 
namely, $\R\in \bold S_c\sf Ra\CA_{\omega}\implies$ \pe\ has a \ws\ in $F^{\omega}$  equivalently in $G_{\omega}$ 
(the last two games formulated for 
$\RA$s the former as in \cite[Definition 28]{r}).
\end{proof}

Next we show that a \ws\ in $H_{\omega}$
is `not enough' to deduce (a) as in the second item of the previous proof 
in the sense that, for $2<n<\omega$, if $\D\in \CA_n$ is countable and atomic and \pe\ has a \ws\ in $H_{\omega}(\At\D)$
then $\At\D\in \At{\sf Nr}_n\CA_{\omega}$ but  this {\it does not necessarily imply} 
that $\D\in {\sf Nr}_n\CA_{\omega}$ (as shown in item (3) in the previous theorem).

\begin{theorem}\label{stronger}Let $2<n<\omega$. Let $\B$ be the algebra in item (3) of theorem \ref{rainbow} taking the field 
$\F$ to be $\mathbb{Q}$. Then  \pe\ has a \ws\ in $H_{\omega}(\At\B)$, $\At\B\in \At{\sf Nr}_n\CA_{\omega}$ but 
$\B\notin {\bf El}{\sf Nr}_n\CA_{n+1}\supsetneq {\sf Nr}_n\CA_{\omega}$.
\end{theorem}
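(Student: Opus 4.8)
With $\alpha=n$ and $\F=\Q$ the ambient algebra of item (3) of Theorem \ref{rainbow} is the \emph{full} cylindric set algebra $\A=\wp({}^n\Q)$ on the infinite base $\Q$, and $y=\{s\in{}^n\Q:s_0+1=\sum_{i>0}s_i\}$ is the hyperplane cut out by the form $s_0-s_1-\cdots-s_{n-1}=-1$, all of whose coefficients are nonzero. Hence every axis--parallel line $\{t+qe_i:q\in\Q\}$ meets $y$ in a unique point $s(i,t)\in y$, so that line equals ${\sf c}_iy_{s(i,t)}\in\B$; intersecting the $n$ such lines through an arbitrary $t\in{}^n\Q$ exhibits $\{t\}\in\B$. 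Thus, as recorded in item (3), $\At\B=\At\A$ is the set of all singletons, $\B\subseteq_d\A$ and $\Cm\At\B=\A$. Since $\A=\wp({}^n\Q)\cong\Nr_n\wp({}^{\omega}\Q)\in{\sf Nr}_n\CA_{\omega}$ and $\A$ is atomic, the second assertion is immediate: $\At\B=\At\A\in\At{\sf Nr}_n\CA_{\omega}$. The same isomorphism supplies the complete atomic dilation $\D=\wp({}^{\omega}\Q)\in\CA_{\omega}$ with $\A=\Nr_n\D$ that drives the first assertion; note also that the game $H_{\omega}(\At\B)$ depends only on the atom structure, so $H_{\omega}(\At\B)=H_{\omega}(\At\A)$.

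For the first assertion the plan is to read \pe's \ws\ in $H_{\omega}(\At\A)$ off the fixed $\omega$--dimensional representation of $\A$ inside $\D$; this is the converse of the implication (*) of item (2), the $\CA$--analogue of the relation algebra argument of \cite[Theorem 33]{r}. \pe\ maintains, at each stage, a $\lambda$--neat hypernetwork $N$ together with an injection $v_N:\nodes(N)\to\Q$ such that $N^a(\bar x)=\{(v_N(x_0),\ldots,v_N(x_{n-1}))\}$ for every $\bar x\in{}^n\nodes(N)$, the hyperlabel $N^h(\bar x)$ being the constant $\lambda$ on short hyperedges and otherwise recording the $\D$--type (in the full $\omega$--dimensional representation) of the sequence $(v_N(x_i))_i$. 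Under this dictionary the network consistency conditions become exactly injectivity of $v_N$ (for the ${\sf d}_{ij}$ clause) together with the cylindrifier clause, which is valid in any set algebra.

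It then remains to answer \pa's three moves. For a cylindrifier move $(N,i,a,\bar x)$ with $a=\{t\}$ and $N^a(\bar x)\le{\sf c}_ia$, the tuple $(v_N(x_j))_j$ agrees with $t$ off coordinate $i$, so \pe\ takes (or adjoins, since $\Q$ is infinite) a node valued $t_i$ and realises $a$ on the corresponding $i$--variant, assigning $\lambda$ to the new short hyperedges. A transformation move $(N,\theta)$ is answered by $N\theta$ with valuation $v_N\circ\theta$. For an amalgamation move $(M,N)$ the valuations $v_M,v_N$ agree on $\nodes(M)\cap\nodes(N)$ by the matching hypothesis together with injectivity, so $v_M\cup v_N$ is a consistent valuation on the union and \pe\ plays the hypernetwork it determines. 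The one genuine point to check---and the step I expect to be the main obstacle---is that through repeated amalgamations the hyperlabels stay globally coherent and $\lambda$--neat; this is precisely where the \emph{infinitely many} spare dimensions of $\D$ and the infiniteness of the base $\Q$ are used, and, crucially, it never fails because $\A$ is the full set algebra, so no family of available labels (reds, etc.) can ever be exhausted, in sharp contrast with the rainbow algebras of item (1). Consequently \pe\ survives every round and wins $H_{\omega}(\At\B)$.

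The third assertion, $\B\notin{\bf El}{\sf Nr}_n\CA_{n+1}$, is exactly item (3) specialised to $\alpha=n$, proved in \cite{SL}, and it is insensitive to the choice $\F=\Q$. Finally $\A=\Nr_n\D\in{\sf Nr}_n\CA_{\omega}\subseteq{\sf Nr}_n\CA_{n+1}\subseteq{\bf El}{\sf Nr}_n\CA_{n+1}$, so ${\sf Nr}_n\CA_{\omega}\subseteq{\bf El}{\sf Nr}_n\CA_{n+1}$, the inclusion being proper because ${\sf Nr}_n\CA_{\omega}$ is not elementary (item (4)); in particular $\B\notin{\sf Nr}_n\CA_{\omega}$. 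This is the intended moral: by item (2) the \ws\ in $H_{\omega}(\At\B)$ yields only $\At\B\in\At{\sf Nr}_n\CA_{\omega}$ and $\Cm\At\B\in{\sf Nr}_n\CA_{\omega}$, and cannot be strengthened to possibility (a), namely to $\B\in{\sf Nr}_n\CA_{\omega}$.
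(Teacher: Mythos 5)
Your second and third assertions are handled correctly and exactly as in the paper: $\A=\wp({}^n\Q)\cong{\Nr}_n\wp({}^{\omega}\Q)$ gives $\At\B=\At\A\in\At{\sf Nr}_n\CA_{\omega}$, and $\B\notin{\bf El}{\sf Nr}_n\CA_{n+1}$ is quoted from \cite{SL} via item (3) of theorem \ref{rainbow}. Your dictionary for the first assertion --- networks on $\At\B$ are precisely injective valuations $v_N:\nodes(N)\to\Q$ --- is also the right picture, and it is in fact the concrete content of the paper's invariant: the paper has \pe\ maintain $(N^a)^+\neq 0$ in an $\omega$--dilation $\D$ witnessing $\B\in\bold S_c{\sf Nr}_n\CA_{\omega}$, and for this algebra $(N^a)^+\neq 0$ holds exactly when such a valuation exists. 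So in spirit you are on the paper's route. The genuine gap is your amalgamation step. You claim that $v_M\cup v_N$ is ``a consistent valuation on the union'' because $v_M,v_N$ are injective and agree on $\nodes(M)\cap\nodes(N)$. Agreement on the overlap makes the union a well-defined function; it does \emph{not} make it injective. Crucially, \pe\ does not choose the values of new nodes: in a cylindrifier move \pa\ chooses the atom $a=\{t\}$, so the value $t_i$ of the witness node is dictated by \pa\ (and transformation moves let him rename nodes). He can therefore arrange, across two separately grown hypernetworks $M$ and $N$ agreeing on a common base, that $v_M(p)=v_N(q)$ for distinct nodes $p\in\nodes(M)\setminus\nodes(N)$ and $q\in\nodes(N)\setminus\nodes(M)$, and then play the amalgamation move $(M,N)$. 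Any $L$ with $\nodes(L)=\nodes(M)\cup\nodes(N)$ extending both is forced by the cylindrifier conditions to satisfy $L(p,q,\bar z)\leq{\sf d}_{01}$ with $p\neq q$, contradicting the diagonal clause in definition \ref{game}. So ``consistency of $v_M\cup v_N$'' is precisely what must be proved, not a consequence of the matching hypothesis; without a strategy-level safeguard (global bookkeeping identifying each used value with a unique node across \emph{all} played hypernetworks, reusing that node whenever \pa's atom forces an old value) the amalgamation response can fail.

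This is exactly the step where the paper invests its technical effort, and where your proposal is thinnest. The paper derives from $\At\B\in\At{\sf Nr}_n\CA_{\omega}$ the rectangularity property (*): for $J=\{i_0,\ldots,i_{n-1}\}$ and $y\in\Nr_J\D$, ${\sf s}_{i_0\ldots i_{n-1}}a\cdot y\neq 0\implies{\sf s}_{i_0\ldots i_{n-1}}a\leq y$ for atoms $a$; following \cite[Lemma 34]{r} this yields ${\sf c}_{(u)}M^+={\sf s}_{\bar i}M(\bar i)={\sf s}_{\bar i}N(\bar i)={\sf c}_{(v)}N^+$, hence $M^+\cdot N^+\neq 0$, from which the amalgam is extracted. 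Note also that what you single out as ``the main obstacle'' --- exhaustion of hyperlabels through repeated amalgamations --- is a non-issue: the paper disposes of the hyperlabel part by pure bookkeeping (the constant $\lambda$ on short hyperedges, fresh labels for long hyperedges incident with new nodes, inheritance from $M$ and $N$ in amalgamations), and nothing there can ever fail for any algebra. The danger sits entirely in the network part of the amalgamation move, i.e.\ in the injectivity/diagonal interaction described above, which your one-line justification does not address.
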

\begin{proof} Fix $2<n<\omega$. 
As in the proof of the referred to theorem, let $y=\{s\in {}^n\mathbb{Q}: s_0+1=\sum_{i>0} s_i\}$, $y_s$ be the singleton containing $s$, i.e. $y_s=\{s\}$
and ${\B}=\Sg^{\A}\{y,y_s:s\in y\}$
where ${\A}=\wp(^n\mathbb{Q})$.   Then as shown in {\it op.cit}, $\B\notin {\sf Nr}_n\CA_{n+1}$, $\At\B\in {\sf Nr}_n\CA_{\omega}$, because 
$\At\B=\{\{s\}: s\in {}^n\Q\}=\At\A$, and $\A\in {\sf Nr}_n\CA_{\omega}$.

We refer the reader the second item of theorem \ref{rainbow} for
the notions of {\it long} and {\it short} hyperedges. 
Now we describe the \ws\ of \pe\ in $H_{\omega}(\At\B)$.
We start by describing \pe's strategy dealing with $\lambda$--neat hypernetworks, where $\lambda$ is a constant label kept on short hyperedges.
In a play, \pe\ is required to play $\lambda$--neat hypernetworks, so she has no choice about the
the short edges, these are labelled by $\lambda$. In response to a cylindrifier move by \pa\
extending the current hypernetwork providing a new node $k$,
and a previously played coloured hypernetwork $M$
all long hyperedges not incident with $k$ necessarily keep the hyperlabel they had in $M$.
All long hyperedges incident with $k$ in $M$
are given unique hyperlabels not occurring as the hyperlabel of any other hyperedge in $M$.
In response to an amalgamation move, which involves two hypernetworks required to be amalgamated, say $(M,N)$
all long hyperedges whose range is contained in $\nodes(M)$
have hyperlabel determined by $M$, and those whose range is contained in $\nodes(N)$ have hyperlabels determined
by $N$. If $\bar{x}$ is a long hyperedge of \pe\ s response $L$ where
$\rng(\bar{x})\nsubseteq \nodes(M)$, $\nodes(N)$ then $\bar{x}$
is given
a new hyperlabel, not used in 
any previously played hypernetwork and not used within $L$ as the label of any hyperedge other than $\bar{x}$.
This completes her strategy for labelling hyperedges.

The \ws\ for  \pe\
is to play $\lambda$--neat hypernetworks $(N^a, N^h)$ with $\nodes(N_a)\subseteq \omega$ such that
$(N^a)^+\neq 0$ (recall that $(N^a)^+$ is as defined in the proof of lemma \ref{n}).
In the initial round, let \pa\ play $a\in \At$.
\pe\ plays a network $N$ with $N^a(0, 1, \ldots n-1)=a$. Then $(N^a)^+=a\neq 0$.
The response to the cylindrifier move is exactly like in the first part of lemma \ref{n} because $\B$ is completely representable
so $\B\in \bold S_c{\sf Nr}_n\CA_{\omega}$ \cite[Theorem 5.3.6]{Sayedneat}.
For transformation moves: if \pa\ plays
$(M, \theta),$ then it is easy to see that we have 
${(M^a\theta)}^+\neq 0$,
so this response is maintained in the next round.
For the amalgamation (new) move, as far as the proof of lemma \ref{n}
is concerned, we need some preparing to do. 
We use the argument in \cite[Lemma 34]{r}. 
For each $J\subseteq \omega$, $|J|=n$ say, let $\Nr_J\D=\{x\in \D: {\sf c}_{l}x=x, \forall l\in \omega\setminus  J\}.$ 
Then it can be shown, using that $\At\B\in \At{\sf Nr}_n\CA_{\omega}$, 
that (*):  for all $y\in \Nr_J\D$,  where $J=\{i_0, i_1, \ldots, i_{n-1}\}$,  
the following holds for $a\in \alpha$:  
${\sf s}_{i_0i_1\ldots i_{n-1}}a\cdot y\neq 0\implies  {\sf s}_{i_0 i_1\ldots i_{n-1}}a \leq y$.

Now we are ready to describle \pe's strategy in response to 
amalgamation moves. For better readability, we write $\bar{i}$ for $\{i_0, i_1, \ldots i_{n-1}\}$, 
if it occurs as a set, and we write ${\sf s}_{\bar{i}}$ short for
${\sf s}_{i_0}{\sf s}_{i_1}\ldots {\sf s}_{i_{n-1}}$. 
Also we only deal with the network part of the game.
Now suppose that  \pa\ plays the amalgamation move $(M,N)$ where $\nodes(M)\cap \nodes(N)=\{\bar{i}\}$,
then $M(\bar{i})=N(\bar{i})$.  Let $\mu=\nodes(M)\setminus \bar{i}$ and $v=\nodes(N)\setminus \bar{i}.$
Then ${\sf c}_{(v)}M^+=M^+$ and
${\sf c}_{(u)}{N^+}={M}^+$.
Hence using (*), we have;
${\sf c}_{(u)}{M}^+={\sf s}_{\bar{i}}M(\bar{i})={\sf s}_{\bar{i}}N(\bar{i})={\sf c}_{(v)}{N}^+$
so ${\sf c}_{(v)}{M}^+={M}^+\leq {\sf c}_{(u)}{M}^+={\sf c}_{(v)}N^+$
and ${M}^+\cdot {N}^+\neq 0.$  So there is $L$ with $\nodes(L)=\nodes(M)\cup \nodes(N)\neq 0$,
and ${L}^+\cdot x\neq 0$, where ${M}^+\cdot {N}^+=x$,
thus ${L}^+\cdot {M}^+\neq 0$
and consequently ${L}\restr {\nodes(M)}={M}\restr {\nodes(M)}$,
hence $M\subseteq L$ and similarly $N\subseteq L$, so that $L$ is the required amalgam.
\end{proof}
From the above proof it is not hard to discern below its surface that if $\D$ is an atomic algebra having countably many atoms 
and \pe\ has a \ws\ in $H_{\omega}(\At\D)$, then $\At\D\in \At{\sf Nr}_n\CA_{\omega}$; in fact a weaker game defined in theorem \ref{finitepa2} 
forces this. For relation algebras there is an entirely analogous situation. 
In \cite{r2} the result alleged in \cite{r} was accordingly  weakened by replacing 
$\sf Ra\CA_{\omega}$ by $\bold S_c\sf Ra\CA_{\omega}$ and $\R\in \sf Ra\CA_{\omega}$ by 
$\At\R\in \At\sf Ra\CA_{\omega}$. 
Here we also know that the end point ${\sf Ra}\CA_{\omega}$ is not elementary \cite{bsl}.
Like the reasoning used in item (2) of theorem \ref{rainbow}, by forming the $\omega$--dilation in $L_{\infty, \omega}$, together with the arguments in \cite{r} we get
the following improvement of the result in \cite{r2}, cf. \cite[Theorem 36]{r}: 
\begin{theorem}\label{ra} Any class $\bold K$ of relation algebras, such that $\bold S_d\sf Ra\CA_{\omega}\cap {\sf CRRA}\subseteq \bold K \subseteq
\bold S_c{\sf Ra}\CA_5$, is not elementary.  Furthermore, if  ${\bf El}\sf Ra\CA_{\omega}\nsubseteq \bold S_d{\sf Ra}\CA_\omega$, then 
any  $\bold K$ such that $\sf Ra\CA_{\omega}\cap {\sf CRRA}\subseteq \bold K \subseteq
\bold S_c{\sf Ra}\CA_5$, $\bold K$  is not elementary.  
\end{theorem}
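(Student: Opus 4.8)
The plan is to transport the argument of item (2) of theorem \ref{rainbow} to relation algebras, replacing the rainbow $\CA_n$ $\A_{\Z,\N}$ by the rainbow relation algebra $\R=\R_{\Z,\N}$ of \cite{r} built on the two orders $\Z$ and $\N$, and replacing the neat reduct operator $\Nr_n$ by the relation algebra reduct operator $\Ra$. First I would recall from \cite{r} the hypernetwork game $H_k$ ($k\le\omega$) for relation algebras, which augments the basic atomic game $G_k$ with transformation and amalgamation moves played on $\lambda$--neat hypernetworks, $\lambda$ a constant hyperlabel kept on all short hyperedges. The crucial structural fact, the relation algebra counterpart of what is established in item (2), is that whenever $\B$ is a \emph{countable} atomic relation algebra for which \pe\ has a \ws\ in $H_\omega(\At\B)$, then not only $\At\B\in\At{\sf Ra}\CA_\omega$, but --- provided the $\omega$--dilation is built from $L_{\infty,\omega}$--formulas rather than first order ones --- one obtains the sharper $\Cm\At\B\in{\sf Ra}\CA_\omega$.

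Next I would verify that \pe\ has a \ws\ in $H_k(\At\R)$ for every finite $k$; this is the relation algebra analogue of the claim about $\CA_{\Z,\N}$ used in theorem \ref{rainbow}, and is carried out as in \cite{r} by feeding her \ws\ in the underlying \ef\ forth game on $(\Z,\N)$ into the hyperedge labelling discipline. Passing to a non--principal ultrapower of $\R$ and running the standard elementary chain argument \cite[Corollary 3.3.5]{HHbook2}, I obtain a countable $\B$ with $\R\equiv\B$ such that \pe\ has a \ws\ in $H_\omega(\At\B)$; in particular $\B$ is completely representable, so $\B\in{\sf CRRA}$. By the previous paragraph $\Cm\At\B\in{\sf Ra}\CA_\omega$, and since $\B\subseteq_d\Cm\At\B$ this gives $\B\in\bold S_d{\sf Ra}\CA_\omega\cap{\sf CRRA}$. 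On the other side, the usual rainbow strategy for \pa, which bombards \pe\ with cones of distinct green tints whose apexes are forced into a strictly descending sequence of reds in $\N$, provides a \ws\ for \pa\ in the relation algebra game $F^5(\At\R)$ (\cite[Definition 28]{r}), witnessing $\R\notin\bold S_c{\sf Ra}\CA_5$. Hence for any $\bold K$ with $\bold S_d{\sf Ra}\CA_\omega\cap{\sf CRRA}\subseteq\bold K\subseteq\bold S_c{\sf Ra}\CA_5$ we have $\B\in\bold K$, $\R\notin\bold K$ and $\R\equiv\B$, so $\bold K$ cannot be elementary; this is the first assertion.

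For the second assertion I would argue mutatis mutandis as in the closing part of the proof of item (2) of theorem \ref{rainbow}. Since the witness $\B$ lies in $\bold S_d{\sf Ra}\CA_\omega\cap{\sf CRRA}$, any class $\bold K$ in the range ${\sf Ra}\CA_\omega\cap{\sf CRRA}\subseteq\bold K\subseteq\bold S_c{\sf Ra}\CA_5$ that contains $\B$ is non--elementary by the first assertion; so it suffices to treat the remaining classes, which are squeezed between ${\sf Ra}\CA_\omega\cap{\sf CRRA}$ and $\bold S_d{\sf Ra}\CA_\omega\cap{\sf CRRA}$. If such a $\bold K$ were elementary, then ${\bf El}{\sf Ra}\CA_\omega\subseteq\bold K$, using that every member of ${\sf Ra}\CA_\omega$ satisfies the Lyndon conditions (the relation algebra form of lemma \ref{n}, \cite[Theorem 33]{r}) and is therefore elementarily equivalent to a completely representable algebra. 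But then ${\bf El}{\sf Ra}\CA_\omega\subseteq\bold K\subseteq\bold S_d{\sf Ra}\CA_\omega$, contradicting the hypothesis ${\bf El}{\sf Ra}\CA_\omega\nsubseteq\bold S_d{\sf Ra}\CA_\omega$. Hence no such $\bold K$ is elementary, and the second assertion follows.

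I expect the main obstacle to lie in the first step: faithfully transferring the $L_{\infty,\omega}$--dilation construction of item (2) from the cylindric to the relation algebra setting. One must verify that \pe's responses to amalgamation and transformation moves in $H_\omega$ remain coherent under the constant labelling of short hyperedges, so that the limit weak models $\M_a$ ($a\in\At\B$) assemble into a generalized weak set algebra $\D=\bold P_{a\in\At\B}\D_a$ of $L_{\infty,\omega}$--formulas with $\B\subseteq\Ra\D$, and then that the completeness of $\D$ (supplied by the infinitary joins available in $L_{\infty,\omega}$) descends to $\Ra\D$, forcing, by density, $\Cm\At\B=\Ra\D\in{\sf Ra}\CA_\omega$ rather than merely $\At\B\in\At{\sf Ra}\CA_\omega$. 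The partial--isomorphism extension and amalgamation lemmas required are the relation algebra versions of \cite[Lemma 34, Theorem 39]{r}; the genuinely new content is re--running them with infinitary labels and with $\Ra$ in place of $\Nr_n$. A secondary, more bookkeeping, difficulty is reconciling the operators ${\sf Ra}$, $\bold S_c{\sf Ra}$ and $\bold S_d{\sf Ra}$ (and their intersections with ${\sf CRRA}$) in the reduction underlying the conditional second assertion.
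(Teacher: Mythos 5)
Your treatment of the first assertion is correct and is essentially the paper's own proof: the paper proves theorem \ref{ra} precisely by the transplantation you describe, namely re-running item (2) of theorem \ref{rainbow} on Hirsch's $\Z,\N$ rainbow relation algebra of \cite{r}, with the $\omega$--dilation built from $L_{\infty,\omega}$--formulas so that $\Cm\At\B\in {\sf Ra}\CA_{\omega}$, whence $\B\in \bold S_d{\sf Ra}\CA_{\omega}\cap {\sf CRRA}$, while \pa's cone strategy wins the $5$--node game, giving $\R\notin \bold S_c{\sf Ra}\CA_5$ and the witness pair $\R\equiv \B$. Your identification of the main technical burden (re-proving the analogues of \cite[Lemma 34, Theorem 39]{r} with infinitary labels and $\Ra$ in place of ${\sf Nr}_n$, and descending completeness from $\D$ to $\Ra\D$) is exactly where the work lies, and this improvement of \cite{r2} is what the paper intends.

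Your argument for the second, conditional assertion, however, has a genuine gap, in two places. First, the case split is not exhaustive: from $\B\notin \bold K$ you cannot conclude that $\bold K$ is squeezed below $\bold S_d{\sf Ra}\CA_{\omega}\cap {\sf CRRA}$ --- a class in the stated range may omit $\B$ while still containing algebras far outside $\bold S_d{\sf Ra}\CA_{\omega}$, and then neither of your cases applies, yet your contradiction uses $\bold K\subseteq \bold S_d{\sf Ra}\CA_{\omega}$. Second, and more seriously, the bridging claim that every member of ${\sf Ra}\CA_{\omega}$ satisfies the Lyndon conditions and is therefore elementarily equivalent to a completely representable algebra is false: ${\sf Ra}\CA_{\omega}$ is not a class of atomic algebras, atomicity is first--order expressible, and completely representable algebras are atomic, so no atomless member of ${\sf Ra}\CA_{\omega}$ is elementarily equivalent to any member of ${\sf CRRA}$; the game lemmas you invoke (\cite[Theorem 33]{r}, the analogue of lemma \ref{n}) presuppose atomicity. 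Even for atomic members with uncountably many atoms, the ultrapower/elementary--chain argument of \cite[Corollary 3.3.5]{HHbook2} produces a countable completely representable $\D\equiv \C$, but nothing places $\D$ back in ${\sf Ra}\CA_{\omega}\cap {\sf CRRA}$ --- this is exactly the subtlety behind the corrigendum \cite{r2} --- so ${\bf El}{\sf Ra}\CA_{\omega}\subseteq \bold K$ does not follow from ${\sf Ra}\CA_{\omega}\cap {\sf CRRA}\subseteq \bold K$. Note that the paper's template in the closing part of item (2) of theorem \ref{rainbow} does not face your difficulty, because there the bottom of the sandwich is the full class ${\sf Nr}_n\CA_{\omega}$, so elementarity of $\bold K$ immediately yields ${\bf El}{\sf Nr}_n\CA_{\omega}\subseteq \bold K$, and the hypothesis is contradicted for classes contained in $\bold S_d$. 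To make your reduction sound you must either strengthen the sandwich bottom to ${\sf Ra}\CA_{\omega}$ itself, or invoke the hypothesis in the form ${\bf El}({\sf Ra}\CA_{\omega}\cap {\sf CRRA})\nsubseteq \bold S_d{\sf Ra}\CA_{\omega}$, and restrict the contradiction to classes $\bold K\subseteq \bold S_d{\sf Ra}\CA_{\omega}$, the remaining classes being covered, as in your first case, only when they contain the witness $\B$; the purely set--theoretic passage you attempt does not bridge these two formulations.
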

\section{Notions of representability and neat embeddings}

Let $2<n<\omega$. We know by item (3) of theorem \ref{rainbow} 
that ${\sf Nr}_n\CA_{\omega}\subsetneq \bold S_d{\sf Nr}_n\CA_{\omega}$ (the algebra denoted by $\B$ in {\it op.cit} is in the latter class 
but not in the former one).
But unlike the case with relation algebras, we do not know whether the inclusion $\bold S_d{\sf Nr}_n\CA_{\omega}\subseteq \bold S_c{\sf Nr}_n\CA_{\omega}$ is proper or not. 
Follows is an attempt to show that it is.
\begin{definition} Let $M$ be the base of a representation of $\A\in \CA_n$. Then $M$ is {\it $n$--homogeneous} if for any partial isomorphism $\theta$ having size $n$ or less
and any finite subset $X$ of $M$, there is a partial isomorphism $\psi$ extending $\theta$ with $X$ contained within $\rng(\psi)$.
\end{definition}
\begin{theorem} \label{finitepa2}Assume that $2<n<\omega$ and that $\A\in \RCA_n$ is complete and atomic having 
countably many atoms. If $\A$  has no $n$--homogeneous representation, then $\At\A\notin \At{\sf Nr}_n\CA_{\omega}$ and $\A\notin \bold S_d{\sf Nr}_n\CA_{\omega}$.
\end{theorem}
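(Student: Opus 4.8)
The plan is to prove the contrapositive and to observe first that the two displayed non--membership conclusions collapse to a single statement about the atom structure. I would record the reduction at the outset: suppose $\A$ is complete and atomic and $\A\in \bold S_d{\sf Nr}_n\CA_{\omega}$, say $\A\subseteq_d \B$ with $\B\in {\sf Nr}_n\CA_{\omega}$. By density every atom of $\A$ is an atom of $\B$ and $\B$ is atomic with $\At\B=\At\A$; since $\A\subseteq_d\B\Rightarrow\A\subseteq_c\B$, all sums existing in $\A$ are preserved in $\B$, so completeness of $\A$ forces $b=\sum^{\B}\{a\in\At\A:a\le b\}\in\A$ for every $b\in\B$, whence $\A=\B\in {\sf Nr}_n\CA_{\omega}$. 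Thus $\A\in\bold S_d{\sf Nr}_n\CA_{\omega}\Rightarrow \A\in{\sf Nr}_n\CA_{\omega}\Rightarrow \At\A\in\At{\sf Nr}_n\CA_{\omega}$, and it suffices to prove the single implication $\At\A\in\At{\sf Nr}_n\CA_{\omega}\Rightarrow \A$ has an $n$--homogeneous representation.

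To prove this implication I would introduce a game $\Gamma_{\omega}(\At\A)$ that is weaker than $H_{\omega}$: it is played on ordinary atomic networks (no hyperlabels), where \pa\ may make cylindrifier, transformation and amalgamation moves exactly as in $H_{\omega}$ but with the $\lambda$--neatness bookkeeping deleted. Assume $\At\A\in \At{\sf Nr}_n\CA_{\omega}$ and fix an atomic $\C=\Nr_n\D$ with $\D\in\CA_{\omega}$ and $\At\C=\At\A$; then $\C$ has countably many atoms and, since ${\sf Nr}_n\CA_{\omega}\subseteq \RCA_n$ by Henkin's theorem, $\C$ is completely representable by theorem \ref{ch}. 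I claim \pe\ wins $\Gamma_{\omega}(\At\A)$ by maintaining networks $N$ with $N^{+}\neq 0$ computed in $\D$ (notation as in lemma \ref{n}). Her responses to cylindrifier moves are those supplied by lemma \ref{n}; her responses to transformation moves use fact (3) in the proof of that lemma, exactly as in the proof of theorem \ref{stronger}; and her responses to amalgamation moves use property $(\ast)$ of theorem \ref{stronger}, namely that for $y\in\Nr_J\D$ one has ${\sf s}_{\bar i}a\cdot y\neq 0\Rightarrow {\sf s}_{\bar i}a\le y$, a property that holds here precisely because $\At\C\in\At{\sf Nr}_n\CA_{\omega}$.

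It then remains to convert a \ws\ for \pe\ in $\Gamma_{\omega}(\At\A)$ into an $n$--homogeneous complete representation of $\A=\Cm\At\A$. Since $\A$ has only countably many atoms, I would let \pa\ schedule, in an $\omega$--length play, all cylindrifier requirements (so that every cylindrification is witnessed) together with all transformation and amalgamation requirements, and take $M$ to be the limit of the networks \pe\ plays; the representation $h\colon\A\to\wp({}^{n}M)$ sends each atom to the set of tuples it labels and is extended by complete additivity. Consistency of the networks makes $h$ a homomorphism, the scheduled cylindrifier moves make it complete (so it represents the complex algebra $\A$, not merely $\Tm\At\A$), and the scheduled amalgamation and transformation responses give exactly the extension property in the definition of $n$--homogeneity: any partial isomorphism of size $\le n$ together with a prescribed finite $X\subseteq M$ is handled by copying the relevant network along the map (a transformation move) and amalgamating it with a network whose nodes include $X$.

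The main obstacle is the amalgamation step, in both directions. On the strategy side one must check that property $(\ast)$ genuinely licenses \pe's amalgamation responses in full generality, not merely for the specific algebra of theorem \ref{stronger}, which is where the hypothesis $\At\A\in\At{\sf Nr}_n\CA_{\omega}$ is indispensable; on the representation side one must verify that the amalgamation and transformation responses, taken together over the whole play, yield the size--$\le n$ homogeneity uniformly, and that the bound $\le n$ is precisely what the $n$--dimensional network labelling can support. I also note that the same game delivers the converse direction advertised after theorem \ref{stronger}: building an $\omega$--dilation from \pe's \ws\ (as in item (2) of theorem \ref{rainbow}, but with ordinary networks) shows that a \ws\ in $\Gamma_{\omega}(\At\A)$ already forces $\At\A\in\At{\sf Nr}_n\CA_{\omega}$, so that for algebras with countably many atoms $n$--homogeneous representability, a \ws\ for \pe\ in $\Gamma_{\omega}$, and $\At\A\in\At{\sf Nr}_n\CA_{\omega}$ all coincide.
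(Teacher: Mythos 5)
Your proposal follows essentially the same route as the paper's own sketch: the opening reduction (a complete atomic $\A$ dense in $\B\in{\sf Nr}_n\CA_{\omega}$ forces $\A=\B$, so everything collapses to the atom--structure claim) is exactly the paper's closing step read contrapositively, and your game $\Gamma_{\omega}$ is the paper's $G_{ca}$ -- an $H$--type game on ordinary networks with cylindrifier, transformation and amalgamation moves -- with \pe's \ws\ extracted from $\At\A\in \At{\sf Nr}_n\CA_{\omega}$ via the maintenance of $N^+\neq 0$ and property $(\ast)$ from theorem \ref{stronger}, which is precisely the intended argument. One point you gesture at but should build into the definition of the game: the paper restricts \pa's amalgamation moves to networks overlapping in at most $n$ nodes, and this restriction is what makes $(\ast)$, stated for $n$--element index sets $J$, directly applicable to \pe's amalgamation responses.

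There is, however, one concrete defect in your final step. A single $\omega$--length play produces one \emph{connected} limit network: in a cylindrifier move \pa\ can only demand an atom $b$ satisfying $N(\bar{x})\leq {\sf c}_ib$ for a tuple $\bar{x}$ already present, and amalgamation moves require nonempty overlap, so atoms not reachable from the initial atom never occur as labels, no matter how \pa\ schedules his moves. If, say, $\A=\A_0\times \A_1$ and the initial atom lies below $(1,0)$, then every label in the play stays below $(1,0)$ and your map $h$ annihilates all atoms below $(0,1)$; thus $h$ built from a single limit $M$ need not be injective and so is not a representation in the paper's sense. The paper avoids this by running one play for \emph{each} atom $a\in \At\A$, forming the limit $N_a$ with starting point $a$, and taking as domain the union $\bigcup_{a\in A}\nodes(N_a)$, with $S^{\cal N}=\{\bar{x}: \exists a\in A, \exists s\in S,\ N_a(\bar{x})=s\}$; then $h$ is nonzero on every atom (the initial tuple of $N_a$ witnesses $a$), completeness holds because every $n$--tuple of every $N_a$ carries an atomic label, and the scheduled transformation and amalgamation responses (your conditions; the paper's (a)--(c)) deliver $n$--homogeneity of this disjoint--union base, which is legitimate since representations here are into generalized set algebras. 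With this per--atom amendment your argument coincides with the paper's proof.
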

\begin{demo}{Sketch of proof} Fix $2<n<\omega$. 
We first show that $\alpha=\At\A\notin \At{\sf Nr}_n\CA_{\omega}$.
Assume for contradiction that $\alpha\in \At{\sf Nr}_n\CA_{\omega}$. Let $G_{ca}$ be a game that is  like $H$
having a cylindrifier move and two amalgamation moves, but it is 
played on {\it networks} not hypernetworks. Also, in amalgamation moves \pa's choice is restricted by choosing networks that overlap only on at most $n$ 
nodes. Having at hand the assumption that $\At\A\in \At{\sf Nr}_n\CA_{\omega}$,  it can be proved that 
\pe\ has a  \ws\  in $G_{ca}(\alpha)$. 
Next one builds a sequence
of networks $N_0\subseteq \ldots N_r\subseteq \omega$, 
such  that $N_0$ is \pe's response  to \pa's move choosing $a$ in the initial round.
By construction this sequence of networks satisfies: 

(a) if $N_r(\bar{x})\leq {\sf c}_ib$ for $\bar{x}\in \nodes(N_r)$, then there
exists $N_s\supseteq N_r$ and a node $k\in \omega\setminus N_r$ such that $N_s(\bar{y})=b$; where
$\bar{y}\equiv_i \bar{x}$ and $\bar{y}_i=k$,

(b) if $\bar{x}, \bar{y}\in \nodes(N_r)$ such that $N_r(\bar{x})=N_r(\bar{y})$,
then there is a finite surjective map $\theta$ extending $\{(x_i, y_i): i<n\}$ mapping onto $\nodes(N_r)$
such that $\dom(\theta)\cap \nodes(N_r)=\bar{y}$,

(c) if $N_r$ is in the sequence and $\theta$ is any partial
isomorphism of $N_r$, then there is $s\geq r$ and a
partial isomorphism $\theta^+$ of $N_s$ extending $\theta$ such that
$\rng(\theta^+)\supseteq\nodes(N_r)$.

Let $N_a$ be the limit of such networks (defined like in the proof of the second item of theorem \ref{rainbow}).
Define a representation $\cal N$ of $\A$ having
domain  $\bigcup_{a\in A}\nodes(N_a)$, by
$S^{\cal N}=\{\bar{x}: \exists a\in A, \exists s\in S, N_a(\bar{x})=s\},$
for any subset $S$ of $\alpha$. Then this can be checked to be by construction (using (a) and (b) and (c)) to be 
a  complete $n$--homogeneous  representation of $\A$, contradiction, so $\alpha\notin \At{\sf Nr}_n\CA_{\omega}$. 
{\it A fortiori} $\A\notin {\sf Nr}_n\CA_{\omega}$, but $\A$ is complete and atomic so 
$\A\notin \bold S_d{\sf Nr}_n\CA_{\omega}$. 
\end{demo}
\begin{corollary}\label{finitepa3} Let $\R$ be an integral finite non--permutational $\sf RRA$ known to exist \cite{r}.  
Let $2<n<\omega$. If $\C_n(\R)(\in \RCA_n$) as constructed in \cite{AU} 
has no $n$--homogeneous representation, then $\bold S_d{\sf Nr}_n\CA_{\omega}\subsetneq \bold S_c{\sf Nr}_n\CA_{\omega}$ 
and  non of the two classes  $\sf CRCA_n$ and $\bold S_d{\sf Nr}_n\CA_{\omega}\cap \bf At$ is contained in the other. In particular,  the last two statements are true 
for $n=3$.
\end{corollary}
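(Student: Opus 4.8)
The plan is to read the corollary off theorem~\ref{finitepa2} together with two witness algebras, one of which is $\C_n(\R)$ itself and the other the algebra from item~(5) of theorem~\ref{rainbow}. First I would record that since $\R$ is finite, the $n$--dimensional algebra $\C_n(\R)\in\RCA_n$ of \cite{AU} is finite, hence complete and atomic with only finitely (\textit{a fortiori} countably) many atoms; being representable it is then automatically completely representable. By theorem~\ref{ch} this gives $\C_n(\R)\in{\sf CRCA}_n=\bold S_c{\sf Nr}_n\CA_{\omega}$. On the other hand $\C_n(\R)$ is complete and atomic with countably many atoms, so under the standing hypothesis that it has \emph{no} $n$--homogeneous representation, theorem~\ref{finitepa2} delivers $\C_n(\R)\notin\bold S_d{\sf Nr}_n\CA_{\omega}$. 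Hence $\C_n(\R)$ witnesses that $\bold S_c{\sf Nr}_n\CA_{\omega}\setminus\bold S_d{\sf Nr}_n\CA_{\omega}\neq\emptyset$, which combined with the inclusion $\bold S_d{\sf Nr}_n\CA_{\omega}\subseteq\bold S_c{\sf Nr}_n\CA_{\omega}$ recorded in the introduction yields the properness $\bold S_d{\sf Nr}_n\CA_{\omega}\subsetneq\bold S_c{\sf Nr}_n\CA_{\omega}$.

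For the incomparability of ${\sf CRCA}_n$ and $\bold S_d{\sf Nr}_n\CA_{\omega}\cap{\bf At}$ I would use the same $\C_n(\R)$ for one direction and the algebra $\B=\Nr_n\Ca(H)$ from item~(5) of theorem~\ref{rainbow} for the other. Indeed $\C_n(\R)$ is completely representable, so $\C_n(\R)\in{\sf CRCA}_n$, while being outside $\bold S_d{\sf Nr}_n\CA_{\omega}$ it is \textit{a fortiori} outside $\bold S_d{\sf Nr}_n\CA_{\omega}\cap{\bf At}$; thus ${\sf CRCA}_n\nsubseteq\bold S_d{\sf Nr}_n\CA_{\omega}\cap{\bf At}$. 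Conversely, $\B$ is atomic and lies in ${\sf Nr}_n\CA_{\omega}\subseteq\bold S_d{\sf Nr}_n\CA_{\omega}$ (every algebra being dense in itself), so $\B\in\bold S_d{\sf Nr}_n\CA_{\omega}\cap{\bf At}$, yet $\B$ has no complete representation, whence $\B\notin{\sf CRCA}_n$; thus $\bold S_d{\sf Nr}_n\CA_{\omega}\cap{\bf At}\nsubseteq{\sf CRCA}_n$. This settles both displayed statements \emph{conditionally} on $\C_n(\R)$ admitting no $n$--homogeneous representation.

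The remaining task, and the step I expect to be the real obstacle, is to discharge this hypothesis for $n=3$, turning the conditional statements into unconditional ones. Here one must exploit that $\R$ is \emph{non--permutational}: the plan is to argue by contradiction that a $3$--homogeneous representation $M$ of $\C_3(\R)$ would, through the defining extension property for partial isomorphisms of size $\leq 3$, propagate enough symmetry among the triangles coding the relation--algebra reduct (isomorphic to $\R$) to manufacture a representation of $\R$ invariant under the coordinate swaps that permutationality would require; non--permutationality of $\R$ forbids exactly this, giving the contradiction. Carrying this out requires unwinding the construction $\C_n$ of \cite{AU} and tracking how a size-$\leq 3$ partial isomorphism extends over a prescribed finite set of nodes, and is the only ingredient not already available from the results above. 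Once established for $n=3$, the $n=3$ instances of both statements follow verbatim from the two preceding paragraphs.
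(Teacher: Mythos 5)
Your proposal reproduces the paper's proof almost verbatim: the same application of theorem~\ref{finitepa2} to $\C_n(\R)$ (finite, hence complete, atomic with countably many atoms, and automatically completely representable, so in ${\sf CRCA}_n\subseteq \bold S_c{\sf Nr}_n\CA_{\omega}$ by theorem~\ref{ch}) yields $\bold S_d{\sf Nr}_n\CA_{\omega}\subsetneq \bold S_c{\sf Nr}_n\CA_{\omega}$ together with ${\sf CRCA}_n\nsubseteq \bold S_d{\sf Nr}_n\CA_{\omega}\cap{\bf At}$, and the same witness from item~(5) of theorem~\ref{rainbow}, an atomic $\B\in{\sf Nr}_n\CA_{\omega}\subseteq \bold S_d{\sf Nr}_n\CA_{\omega}$ with no complete representation, gives the reverse non-containment. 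The only divergence is the $n=3$ step, which you plan to establish by unwinding Hodkinson's construction from \cite{AU}: the paper does not do this, but simply invokes as known (from \cite{r}, where non--permutationality of a finite integral ${\sf RRA}$ amounts to having no homogeneous representation) that a finite representable $\R$ without a homogeneous representation yields a finite $\C_3(\R)$ without a $3$--homogeneous representation — precisely the transfer you correctly single out as the one missing ingredient, so your route is the same with that cited fact re-derived rather than quoted.
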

\begin{proof} From the immediately preceding (sketch of) proof, which shows that under the given hypothesis  
$\C_n(\R)\in {\sf CRCA}_n\subseteq \bold S_c{\sf Nr}_n\CA_{\omega}$  and $\C_n(\R)\notin \bold S_d{\sf Nr}_n\CA_{\omega}$. 
 Also $\bold S_d{\sf Nr}_n\CA_{\omega}\cap {\bf At}\supseteq {\sf Nr}_n\CA_{\omega}\cap {\bf At}\nsubseteq \sf CRCA_n$ (by the proof of 
the last item of theorem \ref{rainbow}). 
The last part follow from the fact that if $\R$ is finite and representable having no homogeneous representation,  
then $\C_3(\R)\in \RCA_3$ is finite 
and it also has  no $3$--homogeneous representation. 
\end{proof}
Fix  finite $n>2$.  The chapter \cite{HHbook2} is devoted to studying the following inclusions between various types of atom structures:
$${\sf CRAS}_n\subseteq {\sf LCAS}_n\subseteq {\sf SRAS_n}\subseteq {\sf WRAS_n}.$$
The first is the class of {\it completely representable} atom structures, the second is the class of atom structures satisfying the {\it Lyndon conditions}, the 
third is the class of {\it strongly representable} atom structures, and the last is the class of {\it weakly representable} atom structures, 
all of dimension $n$. 
It  is shown in \cite{HHbook2} that  all inclusions are proper.

Now one can lift such notions from working on {\it atom structures (the frame level)} to working 
on the {\it (complex) algebra level} restricting his attension to atomic ones. and study
them in connection to neat embedding properties, baring in mind
that Henkin's neat embedding theorem characterizes the class of {\it all} representable algebras and that ${\sf CRCA}_n$ and 
$\bold S_c{\sf Nr}_n\CA_{\omega}$ coincide on atomic algebras 
with countably many atoms, theorem \ref{ch}.

We denote the (elementary) class of $\CA_n$s satsfying the Lyndon conditions by ${\sf LCA}_n$, the (non--elementary) class 
of strongly representable $\CA_n$s by ${\sf SRCA}_n$; $\A\in \CA_n$ is {\it strongly representable} $\iff$ $\A$ is atomic and $\Cm\At\A$ is representable. 
Finally, the (elementary) class of weakly representable $\CA_n$s
by $\sf WRCA_n$, which is just the class $\RCA_n\cap \bf At$. All such classes, by definition, consist of atomic algebras.
In the following theorem ${\bf Up}$ denotes the operation of forming ultraproducts,
and ${\bf Ur}$  denotes the operation of forming {\it ultraroots}.

\begin{theorem}\label{main2} Let $2<n<\omega$. Then the following inclusions hold:
$$\ \ {\sf Nr}_n\CA_{\omega}\cap {\bf At}\subsetneq {\bf El}{\sf Nr}_n\CA_{\omega}\cap {\bf At}
\subsetneq {\bf El}\bold S_d{\sf Nr}_n\CA_{\omega}\cap {\bf At}\subseteq {\bf El}\bold S_c{\sf Nr}_n\CA_{\omega}\cap {\bf At}={\sf LCA}_n$$
$$={\bf El}{\sf CRCA}_n \subsetneq {\sf SRCA_n} \subsetneq
{\bf Up}{\sf SRCA_n}={\bf Ur}{\sf  SRCA_n}={\bf El}{\sf SRCA}_n$$
$$\subset {\bf S}{\sf Nr}_n\CA_{\omega}\cap {\bf At}={\sf WRCA}_n.$$
Furthermore, ${\bf El}\bold L$ for any $\bold L$ of the above classes  is not finitely axiomatizable.
\end{theorem}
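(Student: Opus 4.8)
The plan is to separate the chain of inclusions and equalities from the concluding non--finite--axiomatizability claim. Every soft inclusion is immediate from monotonicity of ${\bf El}(\cdot)$ and of $(\cdot)\cap{\bf At}$, together with the containments ${\sf Nr}_n\CA_\omega\subseteq\bold S_d{\sf Nr}_n\CA_\omega\subseteq\bold S_c{\sf Nr}_n\CA_\omega$ and the implication $\A\subseteq_d\B\Rightarrow\A\subseteq_c\B$ recorded in the introduction. For the two strict inclusions on the left I would reuse witnesses already constructed: the atomic algebra $\B$ of theorem \ref{rainbow}(4) satisfies $\B\equiv\A\in{\sf Nr}_n\CA_\omega$ yet $\B\notin{\sf Nr}_n\CA_{n+1}\supseteq{\sf Nr}_n\CA_\omega$, giving ${\sf Nr}_n\CA_\omega\cap{\bf At}\subsetneq{\bf El}{\sf Nr}_n\CA_\omega\cap{\bf At}$; and the atomic weak set algebra $\B$ of theorem \ref{rainbow}(3) (equivalently theorem \ref{stronger}) satisfies $\B\subseteq_d\Cm\At\B\in{\sf Nr}_n\CA_\omega$, so $\B\in\bold S_d{\sf Nr}_n\CA_\omega$, while $\B\notin{\bf El}{\sf Nr}_n\CA_{n+1}\supseteq{\bf El}{\sf Nr}_n\CA_\omega$, giving the second strict inclusion.

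The central block of equalities is the heart of the argument. I would derive ${\bf El}\bold S_c{\sf Nr}_n\CA_\omega\cap{\bf At}={\sf LCA}_n={\bf El}{\sf CRCA}_n$ from theorem \ref{rep} together with theorem \ref{ch}: for atomic $\A$ the Lyndon sentences $\rho_k$ hold iff \pe\ has a \ws\ in every $G_k(\At\A)$, which by theorem \ref{rep} is equivalent to $\A\equiv\B$ for some completely representable $\B$, that is $\A\in{\bf El}{\sf CRCA}_n$; and since by theorem \ref{ch} the classes ${\sf CRCA}_n$ and $\bold S_c{\sf Nr}_n\CA_\omega$ agree on countable atomic algebras, passing to countable elementary substructures (\ls) shows their elementary closures coincide. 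The inclusion ${\bf El}{\sf CRCA}_n\subseteq{\sf SRCA}_n$ I would obtain by lifting ${\sf LCAS}_n\subseteq{\sf SRAS}_n$ to the algebra level, since $\rho_k$ speaks only of $\At\A$: $\A\in{\sf LCA}_n$ forces $\At\A\in{\sf LCAS}_n\subseteq{\sf SRAS}_n$, whence $\Cm\At\A\in\RCA_n$ and $\A\in{\sf SRCA}_n$, with strictness coming from a strongly representable atom structure violating the Lyndon conditions. The equalities ${\bf Up}{\sf SRCA}_n={\bf Ur}{\sf SRCA}_n={\bf El}{\sf SRCA}_n$ together with ${\sf SRCA}_n\subsetneq{\bf Up}{\sf SRCA}_n$ I would import wholesale from the Hirsch--Hodkinson theory of strongly representable atom structures \cite{HHbook2}, where strong representability is governed by (infinite) chromatic number of associated graphs and the behaviour of this invariant under ultraproducts is analysed; the same reference gives ${\sf LCAS}_n\subsetneq{\sf SRAS}_n\subsetneq{\sf WRAS}_n$ with all inclusions proper. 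Finally ${\sf WRCA}_n=\RCA_n\cap{\bf At}=\bold S{\sf Nr}_n\CA_\omega\cap{\bf At}$ by Henkin's neat embedding theorem; ${\bf El}{\sf SRCA}_n\subseteq{\sf WRCA}_n$ because strong representability implies weak representability and ${\sf WRCA}_n$ is elementary, with strictness furnished by Hodkinson's weakly--but--not--strongly representable atom structure underlying theorem \ref{can}.

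For the non--finite--axiomatizability claim, note first that atomicity is expressible by the single first--order sentence $\forall x\,(x\neq 0\to\exists y\,(0\neq y\leq x\wedge\forall z\,(z\leq y\to z=0\vee z=y)))$, so ${\bf At}$ is elementary and ultraproducts of atomic algebras are atomic. Every listed class $\bold L$ satisfies ${\sf Nr}_n\CA_\omega\cap{\bf At}\subseteq\bold L\subseteq{\sf WRCA}_n$, hence ${\bf El}({\sf Nr}_n\CA_\omega\cap{\bf At})\subseteq{\bf El}\bold L\subseteq{\sf WRCA}_n$ (using ${\bf El}{\sf WRCA}_n={\sf WRCA}_n$). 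Since each ${\bf El}\bold L$ is elementary, it is finitely axiomatizable iff its complement is closed under ultraproducts, so it suffices to produce one sequence $(\A_k)_{k\in\omega}$ of atomic algebras with $\A_k\notin{\sf WRCA}_n$ (whence $\A_k\notin{\bf El}\bold L$ for every $\bold L$) and $\prod_U\A_k\in{\sf Nr}_n\CA_\omega\cap{\bf At}$ (whence $\prod_U\A_k\in{\bf El}\bold L$ for every $\bold L$), which simultaneously refutes finite axiomatizability of all of them. I would take $\A_k\in{\sf Nr}_n\CA_{n+k}\cap{\bf At}$ non--representable; such algebras exist by the Monk / blow--up--and--blur constructions refining Monk's theorem (example \ref{Monk} of \S6 and \cite{t}), and since ${\sf Nr}_n\CA_{n+k}\not\subseteq\RCA_n$ each lies outside ${\sf WRCA}_n$. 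A two--sorted \Los\ argument exploiting pseudo--elementarity of the neat--reduct classes (theorem \ref{rainbow}(4)) places $\prod_U\A_k$ in ${\sf Nr}_n\CA_\omega$, and preservation of atomicity under ultraproducts keeps it atomic, so $\prod_U\A_k\in{\sf Nr}_n\CA_\omega\cap{\bf At}$ as required.

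The main obstacle is the block ${\sf SRCA}_n\subsetneq{\bf Up}{\sf SRCA}_n={\bf Ur}{\sf SRCA}_n={\bf El}{\sf SRCA}_n$: establishing that strongly representable atom structures are not closed under ultraproducts, and that closing them once under ${\bf Up}$ already yields a class that is also closed under ${\bf Ur}$ and hence elementary, rests on the combinatorially delicate graph constructions controlling chromatic number under ultraproducts, and this is the one genuinely external deep input \cite{HHbook2}. A secondary point needing care is the faithful lifting of the atom--structure separations ${\sf LCAS}_n\subsetneq{\sf SRAS}_n\subsetneq{\sf WRAS}_n$ to the algebra level in a manner stable under elementary closure (so that the strictness survives on passing from $\At\A$ to $\A$); everything else reduces, via theorems \ref{rep}, \ref{ch}, \ref{can}, \ref{rainbow} and \ref{stronger}, to bookkeeping with elementary closures and the already--constructed witnesses.
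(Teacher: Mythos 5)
Your proposal tracks the paper's proof skeleton closely: the two left--hand strict inclusions are witnessed exactly as in the paper by items (3) and (4) of theorem \ref{rainbow}, the identification ${\sf LCA}_n={\bf El}{\sf CRCA}_n$ goes through theorem \ref{rep} (with \ls\, legitimately, since the Lyndon sentences are first order), the block ${\bf Up}{\sf SRCA}_n={\bf Ur}{\sf SRCA}_n={\bf El}{\sf SRCA}_n$ is imported (the paper cites \cite[Proposition 2.90]{HHbook}), and the non--finite--axiomatizability scheme is the paper's ``bad sequence with good ultraproduct''. But there is one genuine gap: your justification of ${\bf El}(\bold S_c{\sf Nr}_n\CA_{\omega}\cap {\bf At})={\sf LCA}_n$. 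The direction that matters is $\bold S_c{\sf Nr}_n\CA_{\omega}\cap {\bf At}\subseteq {\sf LCA}_n$, and your route --- theorem \ref{ch} on countable atomic algebras plus passage to countable elementary substructures --- does not deliver it. If $\A\in \bold S_c{\sf Nr}_n\CA_{\omega}\cap {\bf At}$ has uncountably many atoms and $\A'\preceq \A$ is countable, there is no reason why $\A'\in \bold S_c{\sf Nr}_n\CA_{\omega}$: complete embeddability asserts preservation of arbitrary suprema, which is not first order; $\A'$ need not even be a complete subalgebra of $\A$ (its atoms form a countable subset of $\At\A$ whose supremum computed in $\A$ can be strictly below $1$); and item (1) of theorem \ref{rainbow} shows precisely that membership in classes sandwiched around $\bold S_c{\sf Nr}_n\CA_{\omega}$ is not invariant under elementary equivalence, so it certainly cannot be transferred down an elementary substructure for free. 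The paper closes this step with lemma \ref{n}, which you never invoke: $\A\in \bold S_c{\sf Nr}_n\CA_{\omega}\cap {\bf At}$ gives \pe\ a \ws\ in $F^{\omega}(\At\A)$, hence in $G_{\omega}(\At\A)$, hence in every $G_k(\At\A)$, so $\A\in {\sf LCA}_n$ outright, with no cardinality hypothesis. You must reinstate that game--theoretic lemma; \ls\ works only on the other side, ${\sf LCA}_n\subseteq {\bf El}{\sf CRCA}_n$, where the class being transferred is elementary.

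Two further remarks, neither fatal. Your ultraproduct step is correct in substance but not by ``pseudo--elementarity'' alone as stated: closure of each ${\sf Nr}_n\CA_{n+m}$ under ultraproducts only puts $\prod_U\A_k$ in $\bigcap_m {\sf Nr}_n\CA_{n+m}$, which is not known to equal ${\sf Nr}_n\CA_{\omega}$ (an $(n+m)$--dilation for each $m$ separately need not cohere into one $\omega$--dilation). What does work is a Monk--style lifting on the \emph{three}--sorted defining theory of theorem \ref{rainbow}(4), with a dimension sort: if each $\A_k$ is a \emph{full} neat reduct $\A_k=\Nr_n\B_k$ with $\B_k\in \CA_{n+k}$ (such non--representable atomic witnesses come from \cite{t}, e.g.\ the algebras of theorem \ref{thm:cmnr}, not from example \ref{Monk} as such), then \Los\ applied to the single sentence (*) forcing surjectivity onto the $n$--dimensional elements yields $\prod_U\A_k=\Nr_n\C$ for $\C$ of infinite dimension, whence membership in ${\sf Nr}_n\CA_{\omega}$; the paper instead takes Monk's finite algebras of \cite[Construction 3.2.76]{HMT2} and asserts the same conclusion. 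Finally, your claimed strictness of the last inclusion overreaches: Hodkinson's weakly--but--not--strongly representable algebra of theorem \ref{can} separates ${\sf SRCA}_n$ from ${\sf WRCA}_n$, but the displayed inclusion is ${\bf El}{\sf SRCA}_n\subset {\sf WRCA}_n$, and making \emph{that} strict would require a weakly representable atomic algebra not elementarily equivalent to any strongly representable one --- a separate argument, on which the paper itself is silent.
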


\begin{proof}
It is known \cite[Proposition 2.90]{HHbook} that
${\bf Up}{\sf SRCA_n}={\bf Ur}{\sf SRCA_n}={\bf El}{\sf SRCA}_n.$
The strictness of the first  inclusion follows from the fourth item of theorem \ref{rainbow}
and the strictness of the second inclusion follows from the third item 
of theorem \ref{rainbow},  respectively.
We have $\sf LCA_n\subsetneq\sf SRCA_n$, because the first is elementary by definition, the second is not \cite{strongca, HHbook2}. 

For (the remaining) equalities, we show that ${\bf El}(\bold S_c{\sf Nr}_n\CA_{\omega}\cap {\bf At})={\sf LCA}_n ={\bf El}{\sf CRCA}_n$.
Plainly ${\sf CRCA}_n\subseteq {\sf LCA}_n$ \cite{HHbook2}. Conversely, let $\A\in {\sf LCA}_n$. 
We proceed like in the proof of the last item of theorem \ref{rainbow}. We have \pe\ has a \ws\ in $G_k(\At\A)$ for all $k\in \omega$. Using ultrapowers and an elementary chain argument,
we get that $\A\equiv \B$, where $\B$ is countable and \pe\ has a \ws\ in $G_{\omega}(\At\B)$. Hence $\B$ is completely representable 
so $\A\in {\bf El}{\sf CRCA}_n$. Now if $\A\in \bold S_c{\sf Nr}_n\CA_{\omega}\cap {\bf At}$, then by lemma \ref{n}, 
\pe\ has a \ws\ in $F^{\omega}(\At\A)$, hence in $G_{\omega}(\At\A)$, {\it a fortiori}, in  $G_k$ for all $k<\omega$, so $\A$ satsfies the Lyndon conditions.
Since $\sf LCA_n$ is elementary,  we get that ${\bf ElS}_c{\sf Nr}_n\CA_{\omega}\cap {\bf At}\subseteq {\sf LCA}_n$.
But ${\sf CRCA}_n\subseteq \bold S_c{\sf Nr}_n\CA_{\omega}\cap {\bf At}$ by theorem \ref{ch}, hence 
${\sf LCA}_n={\bf El}{\sf CRCA}_n\subseteq {\bf El}(\bold S_c{\sf Nr}_n\CA_{\omega}\cap {\bf At})$, proving the remaining equality
and we are done.

For the second part on non--finite axiomatizability: In \cite[Construction 3.2.76, pp.94]{HMT2}
the non--represenatble Monk algebras are finite,  hence they atomic and
are outside ${\sf RCA}_n\supseteq {\bf El}{\sf SRCA_n}\supseteq {\sf LCA}_n$.  Furthermore, any non--trivial ultraproduct of such algebras
is also atomic and is in  
${\sf Nr}_n\CA_{\omega}\subseteq {\bf El}{\sf Nr}_n\CA_{\omega}\cap {\bf At}\subseteq {\bf El}\bold S_c{\sf Nr}_n\CA_{\omega}\cap {\bf At}\subseteq  
{\sf LCA}_n\subseteq {\bf El} {\sf  SRCA}_n.$ (Witness too example \ref{Monk}, and the last paragraph in the paper for two other 
different non--finite axiomatizability proofs).
 \end{proof}
Fix $2<n<\omega$. In the last item of theorem \ref{rainbow}, 
we showed that there is an atomic $\A\in {\sf Nr}_n\CA_{\omega}$ with uncountably many atoms such that $\A$ is not completely representable.
But the $\omega$--dilation $\C$ for which $\A=\Nr_n\C$ is atomless. So can $\C$ be atomic?  
For an ordinal $\alpha$, let $\PEA_\alpha$ stand for the class of polyadic algebras of dimension $\alpha$ \cite[\S 5.4] {HMT1}.
In the next theorem we show that if $\C\in {\sf PEA}_{\omega}\cap \bf At$,
and $\A=\Nr_n\C$, then $\A\in \PEA_n$ is completely representable.  This gives a plethora of completely representable $\PEA_n$s whose $\CA$ reducts are (of course) 
also completely representable. Recall that we write $\A\subseteq_c \B$ 
to denote that $\A$ is a complete subalgebra of $\B$. We use that if $\A\subseteq_c \B$ and $\B$ is atomic, 
then $\A$ is atomic \cite[Lemma 2.16]{HHbook}.

\begin{theorem}\label{pa} If $2<n<\omega$ and $\D\in \PEA_{\omega}$ is atomic, then
any complete subalgebra of $\Nr_n\D$ is completely representable.
\end{theorem}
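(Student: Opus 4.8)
The plan is to reduce everything to the single statement that an atomic $\D\in\PEA_\omega$ is completely representable, and then to push this property down, first to $\Nr_n\D$ and then to an arbitrary complete subalgebra $\A\subseteq_c\Nr_n\D$. Recall from the discussion after Definition \ref{completerep} that a representation is complete exactly when it is \emph{atomic}, i.e. the images of the atoms cover the unit; this is the only description of completeness I shall use, since the game characterisation of theorem \ref{rep} is available only for algebras with countably many atoms, whereas $\D$ (and hence $\Nr_n\D$) may have as many as $2^{\kappa}$ atoms.

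The two descent steps are routine once complete representability of $\D$ is in hand. A complete representation $h\colon\D\to\wp(V)$, $V\subseteq{}^\omega U$, induces a representation of $\Nr_n\D$ over ${}^nU$ by projecting each $h(x)$ (for $x\in\Nr_n\D$) to its first $n$ coordinates; since $x$ is $n$--dimensional, $h(x)$ is cylindrical in the coordinates $\geq n$ and no information is lost, so the projected map is again an injective homomorphism carrying suprema to unions, hence a complete representation of $\Nr_n\D$. In particular $\Nr_n\D$ is atomic. For the second step, let $\A\subseteq_c\Nr_n\D$ and write $\B=\Nr_n\D$. By \cite[Lemma 2.16]{HHbook}, $\A$ is atomic. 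In any atomic Boolean algebra the only upper bound of the set of atoms is the unit, so $\sum^{\A}\At\A=1$; as $\A\subseteq_c\B$ this gives $\sum^{\B}\At\A=1$, whence every atom $b$ of $\B$ satisfies $b\cdot a\neq 0$, i.e. $b\leq a$, for some $a\in\At\A$. Therefore no atom is lost when the complete representation of $\B$ is restricted to $\A$: $\bigcup_{a\in\At\A}h(a)\supseteq\bigcup_{b\in\At\B}h(b)=1$, so $h\restr A$ is atomic, hence a complete representation of $\A$.

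The substance of the theorem is thus the claim that an atomic $\D\in\PEA_\omega$ is completely representable, and here I would build the representation directly on the atom structure rather than through a game. The point is that $\D$, being a polyadic equality algebra of dimension $\omega$, carries substitution operators $\s_\tau$ for \emph{all} finite transformations $\tau$ of $\omega$ and has infinitely many spare coordinates. One runs a Henkin--style (step--by--step) construction whose points are finite partial maps into $\At\D$ that are mutually consistent with respect to the diagonal and substitution operators; each cylindrifier requirement ``$a\leq\c_i b$'' is discharged by choosing a fresh coordinate $\geq n$ as a witness and, using atomicity of $\D$, an \emph{atom} realising the demand, while the transposition substitutions $\s_{[i,j]}$ keep the labelling symmetric and allow any two consistent partial maps to be amalgamated. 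Because witnesses are always taken to be atoms, the resulting homomorphism sends every atom of $\D$ to a nonempty set and its atom--images cover the unit; that is, the representation is atomic, hence complete.

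The main obstacle is precisely this last construction in the uncountable case. One cannot appeal to the $\omega$--rounded game $G_\omega$, which presupposes countably many atoms, so atomicity (= completeness) of the representation must be secured by the explicit Henkin argument, with the atomicity of $\D$ furnishing a witnessing atom at each demand and the full substitution signature of $\PEA$ providing the homogeneity that lets the partial pieces amalgamate into a single global representation. It is exactly this use of the transposition and substitution operators, unavailable for bare $\CA$s, that accounts for the hypothesis $\D\in\PEA_\omega$ rather than merely $\D\in\CA_\omega$; indeed the last item of theorem \ref{rainbow} shows that for cylindric algebras an atomic neat reduct can fail to be completely representable, so the polyadic structure is doing essential work.
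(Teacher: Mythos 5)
Your architecture (completely represent $\D$ itself, project down to $\Nr_n\D$, then restrict to $\A$) differs from the paper's, which never represents $\D$ and instead builds a representation of $\A$ directly; but the decisive problem is your core claim that an atomic $\D\in\PEA_\omega$ is completely representable ``by an explicit Henkin argument''. You correctly identify the uncountable case as the obstacle, but the sketch you offer does not overcome it: a step-by-step construction of length $\omega$ built from finite approximations can only schedule countably many requirements, so it cannot guarantee that \emph{every} assignment in the limit model lies in the image of an atom when there are uncountably many atoms, and a transfinite rerun destroys the finite-amalgamation machinery your sketch relies on. That this obstacle is substantive and not bookkeeping is witnessed inside the paper itself: in item (5) of theorem \ref{rainbow} there is an atomic $\B\in{\sf Nr}_n\CA_{\omega}$ with uncountably many atoms that is \emph{not} completely representable, even though by lemma \ref{n} \pe\ has a \ws\ in $G_{\omega}(\At\B)$ --- that is, all the local moves of your construction (a witnessing atom for each cylindrifier demand, amalgamation of consistent pieces) are available there, yet no complete representation exists. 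So ``atomicity furnishes a witnessing atom at each demand'' cannot by itself carry the proof; something global must replace the enumeration.

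A second red flag is that your construction invokes only substitutions along finite transformations and transpositions, i.e.\ the $\QEA_\omega$ fragment of the signature; if that sufficed, your argument would settle the case of atomic $\QEA_{\omega}$ dilations, which the paper explicitly declares unknown immediately after theorem \ref{pa}. What the paper's proof actually uses, and what your sketch is missing, is: (i) the dilation theorem \cite[Theorem 5.4.17]{HMT2}, giving $\D=\Nr_{\omega}\B$ for some $\B\in\PEA_{\mathfrak{n}}$ with $\mathfrak{n}$ large, together with the joins ${\sf c}_{(\Gamma)}p=\sum^{\B}\{{\sf s}_{\tau^+}p:\tau\in{}^{\omega}\mathfrak{n},\ \tau\upharpoonright(\omega\setminus\Gamma)=Id\}$, where $\tau$ ranges over \emph{infinite} transformations available only in the full Halmos polyadic signature, plus the joins $\sum^{\B}\{{\sf s}_{\tau^+}x:x\in\At\A\}=1$; and (ii) in place of any enumeration, the observation that a \emph{principal} ultrafilter $F$ generated by an atom below the given nonzero $a$ is an isolated point of the Stone space of $\B$, hence lies outside every nowhere dense set, and therefore preserves \emph{all} of these (uncountably many) joins simultaneously; the representation $x\mapsto\{\bar{t}:{\sf s}_{t\cup Id}^{\B}x\in F\}$ is then read off from $F$. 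Atomicity enters exactly to supply this principal ultrafilter, via the lemmas $\A\subseteq_c\D\subseteq_c\B$ --- lemmas which, incidentally, your descent steps also tacitly require (to know that suprema computed in $\Nr_n\D$ agree with suprema in $\D$, without which your projected map need not carry suprema to unions) but do not prove. As it stands, then, the proposal has a genuine gap precisely at the point where the theorem's content lies.
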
\begin{proof}
We often identify notationally set algebras with their domain. 
Assume that  $\A\subseteq_c{\Nr}_n\D$, where $\D\in \PEA_{\omega}$ is atomic. 
We want to completely represent $\A$.  It suffics to show that for any non--zero $a\in \A$, there is a homomorphism $f:\A\to \wp(^nU)$ 
such that $f(a)\neq 0$, and $\bigcup_{y\in Y}f(y)={}^nU$, whenever $Y\subseteq \A$ satisfies $\sum^{\A}Y=1$.
Assume for the moment  (to be proved in a while) that $\A\subseteq_c \D$. Then $\A$ is atomic, because $\D$ is. For brevity, let $X=\At\A$. 
Let $\mathfrak{m}$ be the {\it local degree} of $\D$, $\mathfrak{c}$ its {\it effective cardinality} 
and let $\mathfrak{n}$ be any cardinal such that $\mathfrak{n}\geq \mathfrak{c}$
and $\sum_{s<\mathfrak{m}}\mathfrak{n}^s=\mathfrak{n}$; such notions are defined in \cite{au}.

Assume that $\D=\Nr_{\omega}\B$, with $\B\in \PEA_{\mathfrak{n}}$ \cite[Theorem 5.4.17]{HMT2}.  
For $\tau\in {}^{\omega}\mathfrak{n}$, we write $\tau^+$ for $\tau\cup Id_{\mathfrak{n}\setminus \omega}(\in {}^\mathfrak{n}\mathfrak{n}$).
Consider the following family of joins evaluated in $\B$,
where $p\in \D$, $\Gamma\subseteq \mathfrak{n}$ and
$\tau\in {}^{\omega}\mathfrak{n}$:
(*) $ {\sf c}_{(\Gamma)}p=\sum^{\B}\{{\sf s}_{{\tau^+}}p: \tau\in {}^{\omega}\mathfrak{n},\ \  \tau\upharpoonright \omega\setminus\Gamma=Id\},$ and (**):
${\sf s}_{{\tau^+}}^{\B}X=1.$
The first family of joins exists \cite{au}, and the second exists, 
because $\sum ^{\A}X=\sum ^{\D}X=\sum ^{\B}X=1$ and $\tau^+$ is completely additive, since
$\B\in \PEA_{\mathfrak{n}}$. 
The last equality of suprema follows from the fact that $\D=\Nr_{\omega}\B\subseteq_c \B$ and the first
from the fact that $\A\subseteq_c \D$. We prove the former, the latter is exactly the same replacing
$\omega$ and $\mathfrak{n}$, by $n$ and $\omega$, respectivey, proving that $\Nr_n\D\subseteq_c \D$, hence $\A\subseteq_c \D$.  

We prove that $\Nr_{\omega}\B\subseteq_c \B$. Assume that $S\subseteq \D$ and $\sum ^{\D}S=1$, and for contradiction, that there exists $d\in \B$ such that
$s\leq d< 1$ for all $s\in S$. Let  $J=\Delta d\setminus \omega$ and take  $t=-{\sf c}_{(J)}(-d)\in {\D}$.
Then  ${\sf c}_{(\mathfrak{n}\setminus \omega)}t={\sf c}_{(\mathfrak{n}\setminus \omega)}(-{\sf c}_{(J)} (-d))
=  {\sf c}_{(\mathfrak{n}\setminus \omega)}-{\sf c}_{(J)} (-d)
=  {\sf c}_{(\mathfrak{n}\setminus \omega)} -{\sf c}_{(\mathfrak{n}\setminus \omega)}{\sf c}_{(J)}( -d)
= -{\sf c}_{(\mathfrak{n}\setminus \omega)}{\sf c}_{(J)}( -d)
=-{\sf c}_{(J)}( -d)
=t.$
We have proved that $t\in \D$.
We now show that $s\leq t<1$ for all $s\in S$, which contradicts $\sum^{\D}S=1$.
If $s\in S$, we show that $s\leq t$. By $s\leq d$, we have  $s\cdot -d=0$.
Hence by ${\sf c}_{(J)}s=s$, we get $0={\sf c}_{(J)}(s\cdot -d)=s\cdot {\sf c}_{(J)}(-d)$, so
$s\leq -{\sf c}_{(J)}(-d)$.  It follows that $s\leq t$ as required. Assume for contradiction that 
$1=-{\sf c}_{(J)}(-d)$. Then ${\sf c}_{(J)}(-d)=0$, so $-d =0$ which contradicts that $d<1$. We have proved that $\sum^{\B}S=1$,
so $\D\subseteq_c \B$.

Let $F$ be any Boolean ultrafilter of $\B$ generated by an atom below $a$. We show that $F$
will preserve the family of joins in (*) and (**).
We use a simple topological argument  used by the author in \cite{au}. 
One forms nowhere dense sets in the Stone space of $\B$ 
corresponding to the aforementioned family of joins 
as follows. The Stone space of (the Boolean reduct of) $\B$ has underlying set,  the set of all Boolean ultrafilters
of $\B$. For $b\in \B$, let $N_b$ be the clopen set $\{F\in S: b\in F\}$.
The required nowhere dense sets are defined for $\Gamma\subseteq \mathfrak{n}$, $p\in \D$ and $\tau\in {}^{\omega}\mathfrak{n}$ via:
$A_{\Gamma,p}=N_{{\sf c}_{(\Gamma)}p}\setminus N_{{\sf s}_{\tau^+}p}$; here we require that $\tau\upharpoonright (\omega\setminus \Gamma)=Id$, 
and $A_{\tau}=S\setminus \bigcup_{x\in X}N_{{\sf s}_{\tau^+}x}.$
The principal ultrafilters are isolated points in the Stone topology, so they lie outside the nowhere dense sets defined above.
Hence any such ultrafilter preserve the joins in (*) and (**). 
Fix a principal ultrafilter $F$ preserving (*) and (**) with $a\in F$. 
For $i, j\in \mathfrak{n}$, set $iEj\iff {\sf d}_{ij}^{\B}\in F$.
Then by the equational properties of diagonal elements and properties of filters, it is easy to show that $E$ is an equivalence relation on $\mathfrak{n}$.
Define $f: \A\to \wp({}^n(\mathfrak{n}/E))$, via $x\mapsto \{\bar{t}\in {}^n(\mathfrak{n}/E): {\sf s}_{t\cup Id}^{\B}x\in F\},$
where $\bar{t}(i/E)=t(i)$ and $t\in {}^n\mathfrak{n}$. 
It is not hard to show that $f$ is well--defined, a homomorphism (from (*)) and atomic (from (**)), such that $f(a)\neq 0$ $({\bar{Id}}\in f(a)$).
\end{proof}

If the dilation is in $\QEA_{\omega}$ (an $\omega$ dimensional quasi--polyadic equality algebra) we have a weaker result. We do not know whether the result proved for $\PEA_{\omega}$ holds
when the $\omega$--dilation is an atomic $\QEA_{\omega}$. Let $n<\omega$. Let $\D\in \QEA_{\omega}\cap \bf At$. 
Assume that  for all $x\in \D$ for all $k<\beta$, 
${\sf c}_kx=\sum_{l\in \beta}{\sf s}_l^kx$. (Such joins exist for example if $\D$ is {\it dimension complemented} in the sense 
that $\omega\setminus \Delta x$ is infinite
for all $x\in D$, where $\Delta x=\{i\in \omega: \c_ix\neq x\}$.)   
If $\A\subseteq \Nr_{n}\D$  such that $\A\subseteq_c \D$ (this is stronger than $\A\subseteq_c \Nr_n\D$), 
then $\A$ is completely representable. To see why, first observe that $\A$ is atomic, because $\D$ is atomic and $\A\subseteq_c \D$.
Accordingly, let $X=\At\A$. Let $a\in \A$ be non-zero. 
As  before, one finds a principal ultrafilter $F$ such that $a\in F$ and $F$ preserves the family of joins ${\sf c}_{i}x=\sum^{\D}_{j\in \beta} {\sf s}_j^i x$, 
and $\sum {\sf s}^{\D}_{\tau}X=1$, where $\tau:\omega\to \omega$ is a finite transformation; that is $|\{i\in \omega: \tau(i)\neq i\}| <\omega$. 
The first family of joins exists by assumption, the second exists, since $\sum^{\D}X=1$ by 
$\A\subseteq_c \D$ and the $\s_{\tau}$s are completely additive.   Any principal ultrafilter $F$ generated by an atom below $a$ will do, as shown in the previous 
proof.  Again as before, the selected $F$ gives the required complete representation of $\A$.

\section{Rainbows versus Monk--like algebras}

Let $2<n<\omega$. Rainbow algebras are similar to Monk--like algebras but {\it only superficially.}
Suppose that $\A\in \CA_n$ is {\it not representable}. The non--representability of $\A$ amounts  to that $\A\notin \bold S{\sf Nr}_n\CA_{n+k}$ for some finite $k$ because
$\RCA_n=\bigcap_{k<\omega}\bold S{\sf Nr}_n\CA_{n+k}$. Can we `pin down' the value of $k$? 
Roughly, the representability of an algebra can be tested by  an $\omega$--rounded game between the two players \pa\ and \pe.
In rainbow constructions the \ws's of the two players are independent, this is  reflected by the fact
that we have  two `independent parameters' $\sf G$ (the greens)  and $\sf R$ (the reds) which are usually surprisingly 
simple relational structures, like finite complete irreflexive 
graphs or ordered structures. (We encountered  these last two cases in theorems \ref{can} and \ref{rainbow}).

In  Monk--like algebras \ws's are interlinked, one operates through the other; hence only one parameter is the source of colours,
typically a certain graph $\G$, witness example \ref{Monk} to follow. Representability of the agebra in
this case depends only on the chromatic number of $\G$, via an application of Ramseys' theorem.
In both cases two players operate using `cardinality of a labelled graph'.
\pa\ trying to make this graph too large for \pe\ to cope, colouring some of
its edges suitably. For the rainbow case, as we have seen in the proofs of theorems \ref{can} and the first and second items of theorem 
\ref{rainbow}, it is a red clique formed during the play.

It might be clear in both cases (rainbow and Monk--like algebras), to see that \pe\ cannot win the infinite game, but what might not
be clear is {\it when does this happens; we know it eventually does but how 
many `pairs of pebbles' on the board, or/ and the number of rounds of the play  do \pa\ need to win?}.
In Monk algebras such numbers are  determined by a large {\it uncontrollable Ramsey number}.
In rainbow constructions, one has more control by varying the green parameter.
The structures $\sf G$ and $\sf R$, having any relative strength
gives flexibility and more control over the rainbow  game lifted
from an \ef\ forth--game on these structures.
The number of nodes  used by \pa\ in the graph game, dictated by the number of pebbles pairs in the aforementioned \ef\ (private) 
game, determines exactly when the algebra in question  
{\bf `stops to be representable'};  it has control over $k$ as specfied above. 

We have seen in this connection that {\it by adjusting the number of greens in the proof of theorem \ref{can} to be $n+1$ 
one gets a finer result than Hodkinson's \cite{Hodkinson} where there were infinitely many greens. 
By truncating  the greens to be $n+1$, we could tell when $\bold S{\sf Nr}_n\CA_{n+k}$, $3\leq k\leq \omega$ {\bf `stops to be atom--canonical'}}.
The dimension $n+k$ (of algebras in $\bold S{\sf Nr}_n\CA_{n+k}$) is controlled by the number of greens $\sf num(g)$ that we start off with.  
One takes ${\sf num({\sf g})}=n+1$, so that $n+3=2+{\sf num(\sf g)}$. The number $2$ is the {\it increase in the number} 
from passing from {\it the number of `pairs of pebbles'} 
used in the private \ef\ forth game ${\sf EF}_{n+1}^{n+1}(n+1, n)$ (as defined on \cite[p. 493]{HHbook}),  
to the {\it number of nodes} used in 
coloured graphs during the play lifted to the rainbow algebra $\D=\CA_{n+1, n}$. 
The last game is the graph version of $F_{r}^{n+3}(\At\D)$ some finite $r\geq n+3$.
We have seen that \pa\  has a \ws\ in both games; the private \ef\ forth `pebble game' using $n+1$ pebble pairs, 
and (hence) the rainbow game, where the number of nodes used is $(n+1)+2$ {\it excluding the existence of an $n+3$--dilation of $\D$}.

In the next example we show that one can prove the weaker result that $\RCA_n$ is not atom--canonical using Monk algebras based on (Monk) relation algebras
and that from such a construction one recovers the non--finite axiomatizability 
results proved by Monk for (representable) relation and cylindric algebras of finite dimension $>2$. But first a definition.  

Let $\R$ be an atomic  relation algebra. Let $2<m<\omega$. An {$m$--dimensional basic matrix}, or simply a matrix  
on $\R$, is a map $f: {}^2m\to \At\R$ satsfying the 
following two consistency 
conditions $f(x, x)\leq \Id$ and $f(x, y)\leq f(x, z); f(z, y)$ for all $x, y, z<m$. For any $f, g$ basic matrices
and $x, y<m$ we write $f\equiv_{xy}g$ if for all $w, z\in m\setminus\set {x, y}$ we have $f(w, z)=g(w, z)$.
We may write $f\equiv_x g$ instead of $f\equiv_{xx}g$.  

\begin{definition}\label{b}
An {\it $m$--dimensional cylindric basis} for an atomic relaton algebra 
$\R$ is a set $\cal M$ of $m$--dimensional matrices on $\R$ with the following properties:
\begin{itemize}
\item If $a, b, c\in \At\R$ and $a\leq b;c$, then there is an $f\in {\cal M}$ with $f(0, 1)=a, f(0, 2)=b$ and $f(2, 1)=c$
\item For all $f,g\in {\cal M}$ and $x,y<m$, with $f\equiv_{xy}g$, there is $h\in {\cal M}$ such that
$f\equiv_xh\equiv_yg$. 
\end{itemize}
\end{definition}
One can construct a $\CA_l$ in a natural way from an $l$--dimensional cylindric basis \cite{HHbook}.
For an atomic  relation algebra $\R$ and $l>3$, we denote by $\sf Mat_l(\At\R)$ the set of all $l$--dimensional basic matrices on $\R$.
${\sf Mat}_l(\At\R)$ is not always an $l$--dimensional cylindric basis.

\begin{example}\label{Monk}{\it Let $\G$ be a graph.
Let $\rho$ be a `shade of red'; we assume that $\rho\notin \G$. Let $L^+$ be the signature consisting of the binary
relation symbols $(a, i)$, for each $a \in \G \cup \{ \rho\}$ and
$i < n$.  Let $T$ denote the following (Monk) theory in this signature:
$M\models T$ iff
for all $a,b\in M$, there is a unique $p\in (\G\cup \{\rho\})\times n$, such that
$(a,b)\in p$ and if  $M\models (a,i)(x,y)\land (b,j)(y,z)\land (c,k)(x,z)$, $x,y, z\in M$, then $| \{ i, j, k \}> 1 $, or
$ a, b, c \in \G$ and $\{ a, b, c\} $ has at least one edge
of $\G$, or exactly one of $a, b, c$ -- say, $a$ -- is $\rho$, and $bc$ is
an edge of $\G$, or two or more of $a, b, c$ are $\rho$.

We denote the class of models of $T$ which can be seen as coloured undirected
graphs (not necessarily complete) with labels coming from
$(\G\cup \{\rho\})\times n$  by $\GG$.
Now specify $\G$ to be either:
\begin{itemize}
\item the graph with nodes $\N$ and edge relation $E$
defined by $(i,j)\in E$ if $0<|i-j|<N$, where $N\geq n(n-1)/2$ is a postive number.

\item or the $\omega$ disjoint union of $N$ cliques, same $N$.
\end{itemize}

In both cases the countably infinite graphs 
contain infinitey many $N$ cliques.  In the first they overlap, in the second they do not.
One shows that there is a countable ($n$--homogeneous) coloured graph  (model) $M\in \GG$,
with the following property \cite[Proposition 2.6]{Hodkinson}:
If $\triangle \subseteq \triangle' \in \GG$, $|\triangle'|
\leq n$, and $\theta : \triangle \rightarrow M$ is an embedding,
then $\theta$ extends to an embedding $\theta' : \triangle'
\rightarrow M$.

Here the choice of $N\geq n(n-)/n$ is not 
haphazard; it bounds the number of edges of any graph $\Delta$ of size $\leq n$.
This is crucial to show that for any permutation $\chi$ of $\omega \cup \{\rho\}$, $\Theta^\chi$
is an $n$-back-and-forth system on $M$ \cite{weak}.  Like in the proof of theorem \ref{can} and its preceding model--theoretic outline, 
the countable atomic   set algebra $\A$ based on $M$ whose top element $W$ is
obtained from $^nM$ by discarding assignments whose edges are labelled by one of $n$--shades of reds ($(\rho, i): i<n)$), 
is (classically) representable.  The classical semantics of  $L_{\omega, \omega}$ formulas 
and relativized semantics (restricting assignments to $W$), coincide, so that 
$\A$ is isomorphic to a set algebra with top element $^nM$. 

So for $l>2$, $l$ finite, let $\A_l$ be the atomic $\sf RCA_n$ 
constructed from $\G_l$, $l\in \omega$ where $\G_l$ has nodes $\N$ and edge relation $E_l$ defined by
$(i,j)\in E_l\iff 0<|i-j|<N_l$, or a disjoint countable union of $N_l$ cliques, such that for $i<j\in \omega$, $n(n-1)/n\leq N_i<N_j.$
Then  $\Cm\At\A_l$ with $\A_l$ based on $\G_l$, as constructed in \cite{weak} is not representable.
So $(\Cm\At\A_l: l\in \omega)$ is a sequence of non--representable algebras,
whose ultraproduct $\B$, being based on the ultraproduct of graphs having arbitrarily large chromatic number, 
will have an infinite clique, and so $\B$ will be completely representable \cite[Theorem 3.6.11]{HHbook2}.
The sequence $(\Tm\At\A_l: l\in \omega)$ is a sequence of representable, 
but not strongly representable, {\it least completely representable} algebras, whose ultraproduct is completey representable.
The same holds for the sequence of 
relation algebras $(\R_l:l\in \omega)$ constructed as in \cite{weak}  for 
which $\Tm\At\A_l\cong \Mat_n\At\R_l$.
Using a standard Los argument, this recovers Monk's classical result \cite{Monk} 
on non--finite axiomatizability of $\sf RRA$s and $\RCA_n$s. 
Also from the second part it follows that the elementary closure of the class of completely representable relation algebras 
and $\CA_n$s, namely, the class of algebras satisfying the Lyndon conditions is not finitely axiomatizable

The relation algebra $\R_l$ in the above sequence is defined as follows. We fix $l$ and we denote $\G_l$ by $\G$ and $\R_l$ by $\R$. Consider the following relation algebra atom structure 
$\alpha(\G)=(\{{\sf Id}\}\cup (\G\times n), R_{\sf Id}, \breve{R}, R_;)$, where:
The only identity atom is $\sf Id$. All atoms are self converse,
so $\breve{R}=\{(a, a): a \text { an atom }\}.$
The colour of an atom $(a,i)\in \G\times n$ is $i$. The identity $\sf Id$ has no colour. A triple $(a,b,c)$
of atoms in $\alpha(\G)$ is consistent if
$R;(a,b,c)$ holds $(R;$ is the accessibility relation corresponding to composition). Then the consistent triples 
are $(a,b,c)$ where:
One of $a,b,c$ is $\sf Id$ and the other two are equal, or
none of $a,b,c$ is $\sf Id$ and they do not all have the same colour, or
$a=(a', i), b=(b', i)$ and $c=(c', i)$ for some $i<n$ and
$a',b',c'\in \G$, and there exists at least one graph edge
of $G$ in $\{a', b', c'\}$.

$\C$ is not representable because $\Cm(\alpha(\G))$ is not representable  and 
${\sf Mat}_n(\alpha(\G))\cong \At\A$, where $\A$ is the atomic $\CA_n$ based on $\G$ defined above. Indeed, for 
$m  \in {\Mat}_n(\alpha(\G)), \,\ \textrm{let} \,\ \alpha_m
= \bigwedge_{i,j<n}  \alpha_{ij}. $ Here $ \alpha_{ij}$ is $x_i =
x_j$ if $ m_{ij} = \Id$ and $R(x_i, x_j)$ otherwise, where $R =
m_{ij} \in L$. Then the map $(m \mapsto
\alpha^W_m)_{m \in {\Mat}_n(\alpha(\G))}$ is a well - defined isomorphism of
$n$-dimensional cylindric algebra atom structures.
Non-representability follows from the fact that  $\G$ is a `bad' graph, that is, 
$\chi(\G)=N<\infty$ \cite[Definition 14.10, Theorem 14.11]{HHbook}.
The relation algebra atom structure specified above is exactly like the one in Definition 14.10 in {\it op.cit}, except that we have $n$ colours 
rather than just three.}
\end{example}
\subsection{Monk algebras and the neat embedding problem}

Sometimes Monk--like algebra are more handy and efficient when games used involve so--called {\it amalgamation moves} 
\cite[The proof of Theorem 15.1]{HHbook}, giving sharper non--finite axiomatizability results. 
The efficiently of the use of Monk--like algebras is witnessed in Hirsch and Hodkinson's 
construction used to solve \cite[Problem 2.12]{HMT1}. In this context, Hirsch and Hodkinson do not require an (uncontrollable) Ramsey number of extra variables (dimensions) needed in proofs,
which was what Monk did in his original proof of the non--finitely axiomatizability result for $\sf RCA_n$ $(2<n<\omega)$ showing that 
$\bold S{\sf Nr}_n\CA_{n+k}\neq {\sf RCA}_n$ for $2<n<\omega$ and $k\in \omega$, 
but they rather {\it require only {\bf one more}  than the  number of colours used}. 
By doing so the proof gives a substantially finer result, namely, that for $2<n<\omega$ and any positive $k\geq 1$, 
$\bold S{\sf Nr}_n\CA_{n+k+1}$ is not even finitely axiomatizable over $\bold S{\sf Nr}_n\CA_{n+k}$. 

In \cite{t}, the famous {\it Neat embedding Problem}, posed as \cite[Problem 2.12]{HMT1} for $\CA$s,
Pinter's substitution algebras ($\Sc$s), polyadic algebras ($\PA$s), quasi--polyadic algebras ($\QA$s),
$\PEA$s, and $\QA$s with equality ($\QEA$s). For all such classes of cylindric--like algebras, the notion of  neat reducts can be defined analogously to the $\CA$ 
case. 
Existing in a somewhat scattered form in the literature, equations defining
$\sf Sc_\alpha, \sf QA_\alpha$ and $\QEA_\alpha$ are given in the appendix of \cite{t} for any ordinal $\alpha$.
It is proved in {\it op. cit} that for any class $\K$ between $\Sc$ and $\QEA$, for any positive $k$,  and for any ordinal $\alpha>2$, 
the class $\bold S{\sf Nr}_\alpha\K_{\alpha+k+1}$ is not axiomatizable by a finite schema over 
$\bold S{\sf Nr}_n\K_{\alpha+k}$.  We strengthen this result when $\alpha\geq \omega$ and when we have diagonal
elements, namely, for any class $\K$ between $\CA_{\alpha}$ and $\QEA_{\alpha}$.
\begin{theorem} \label{2.12a} Let $\alpha$ be any ordinal $>2$ possibly infinite.  Then for any $r\in \omega$, and $k\geq 1$, there exists $\A_r\in \bold S{\sf Nr}_{\alpha}\QEA_{\alpha+k}$
such that $\Rd_{ca}\A_r\notin \bold S{\sf Nr}_{\alpha}\CA_{\alpha+k+1}$
and $\Pi_{r/U}\A_r\in \sf RQEA_{\alpha}$ for any non--principal ultrafilter
$U$ on $\omega$. 
\end{theorem}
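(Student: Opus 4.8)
The plan is a direct construction in dimension $\alpha$ by a Monk-like recipe of the efficient kind discussed just above, where a win for \pa\ costs only one dimension beyond the number of colours; this is what produces the single-dimension, type-mixed gap ($\QEA$ at $\alpha+k$ versus $\CA$ at $\alpha+k+1$) demanded here. A preliminary remark fixes the level of difficulty: for infinite $\alpha$ one cannot reduce to a finite-dimensional core, since $\A_r\in\bold S\Nr_\alpha\QEA_{\alpha+k}$ already forces every finite neat reduct $\Nr_m\A_r$ $(m<\omega)$ to be representable. Indeed $\alpha+k\ge\omega=m+\omega$, so $\Nr_m\QEA_{\alpha+k}\subseteq\bold S\Nr_m\QEA_{m+\omega}=\sf RQEA_m$; hence the obstruction to an $(\alpha+k+1)$-dimensional $\CA$-dilation must be genuinely infinite-dimensional, invisible to any finite reduct, and I would build it by an infinite-dimensional game played with networks whose node set is drawn from $\alpha+k+1$.

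First I would fix a Monk-like relation (or cylindric) atom structure with a number of colours calibrated to $\alpha+k$ and a secondary flaw parameter $r\in\omega$ (for instance a chromatic/clique parameter) that weakens as $r$ grows, as in Example \ref{Monk}. From this I would build directly an $\alpha$-dimensional atomic algebra $\A_r\in\QEA_\alpha$ together with an explicit $(\alpha+k)$-dimensional dilation $\B_r\in\QEA_{\alpha+k}$ with $\A_r\subseteq\Nr_\alpha\B_r$, so that the positive assertion $\A_r\in\bold S\Nr_\alpha\QEA_{\alpha+k}$ holds by construction. The essential feature, and the reason the gap is type-mixed, is that the extension of the operations to the $k$ new dimensions of $\B_r$ uses the \emph{transposition} (polyadic) operators and the diagonal elements in an essential way: the $\QEA$-structure at level $\alpha+k$ can be completed, whereas the purely cylindric reduct cannot be completed even with one more dimension.

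For the negative assertion $\Rd_{ca}\A_r\notin\bold S\Nr_\alpha\CA_{\alpha+k+1}$ I would exhibit a winning strategy for \pa\ in the infinite-dimensional analogue of the game of Lemma \ref{n}, played on $\At(\Rd_{ca}\A_r)$ with node set contained in $\alpha+k+1$; the contrapositive of that game characterisation then yields the claim. The strategy of \pa\ is the Monk strategy: exploiting the efficiency noted above, whereby one dimension more than the number of colours suffices, he uses the single extra dimension to force \pe\ into a forbidden monochromatic configuration in the calibrated colour-shades, to which she has no consistent cylindric response. The point distinguishing this from the finite case is that the move which \pe\ lacks is precisely the value of an $(\alpha+k+1)$-st cylindrifier, so the obstruction genuinely concerns that extra cylindrification and, as observed, is invisible to every finite neat reduct.

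Finally, for $\Pi_{r/U}\A_r\in\sf RQEA_\alpha$ I would run an ultraproduct argument: as $r\to\infty$ the flaw disappears in the limit (the ultraproduct atom structure has, e.g., arbitrarily large cliques, hence an infinite one), so by the complete-representability criterion (the game-theoretic characterisation underlying Theorem \ref{rep}) \pe\ wins the $\omega$-rounded game on $\Pi_{r/U}\At\A_r$, and this lifts to a representation of $\Pi_{r/U}\A_r$ as in Example \ref{Monk}. The main obstacle I expect is exactly the negative direction in dimension $\alpha$: setting up the infinite-dimensional game and the strategy of \pa\ so that the one-extra-dimension mechanism is valid when the board is indexed by the infinite ordinal $\alpha+k+1$, while keeping the diagonals and all transposition operators coherent. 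This is the genuinely new, equality-sensitive content beyond the $\Sc$ and $\PA$ cases of \cite{t}; once the infinite-dimensional win for \pa\ is in place, the positive direction and the ultraproduct step follow the familiar pattern.
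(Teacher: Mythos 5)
Your plan rests on a misdiagnosis in the preliminary remark, and the engine you propose in its place does not exist. You are right that every finite \emph{neat} reduct $\Nr_m\A_r$ $(m<\omega)$ is representable, but you conclude from this that the obstruction is ``invisible to any finite reduct'' and that one cannot work through a finite-dimensional core. The paper's proof does exactly that, via Monk's lifting argument, and the two claims are compatible because the lifting passes through \emph{ordinal-indexed reducts} $\Rd^{\rho_\Gamma}$ (same universe, forget the operations indexed outside a finite $\Gamma\subseteq\alpha$), not through neat reducts $\Nr_m$ (relativized universe). Concretely: one first proves, for $2<m<n<\omega$, that the finite-dimensional algebras $\C(m,n,r)=\Ca(H^{n+1}_m(\A(n,r),\omega))$ satisfy $\C(m,n,r)\in{\sf Nr}_m\QEA_n$, $\Rd_{ca}\C(m,n,r)\notin\bold S{\sf Nr}_m\CA_{n+1}$ and $\Pi_{r/U}\C(m,n,r)\in\RQEA_m$; one then sets $\B^r=\Pi_{\Gamma/F}\C^r_\Gamma$ where $\Rd^{\rho_\Gamma}\C^r_\Gamma=\C(|\Gamma|,|\Gamma|+k,r)$ and $F$ is an ultrafilter on the finite subsets of $\alpha$. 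The non-membership $\Rd_{ca}\B^r\notin\bold S{\sf Nr}_\alpha\CA_{\alpha+k+1}$ reflects down to finitely many dimensions by a \L os-style argument: the obstruction \emph{is} visible in finite reducts, just not in finite neat reducts. Your substitute mechanism, by contrast, cannot work as stated: the Monk ``one more dimension than the number of colours'' device is finite Ramsey-type counting, and there is no calibration of a ``number of colours'' to the infinite ordinal $\alpha+k$ that could separate dimension $\alpha+k$ from $\alpha+k+1$ (these ordinals even have the same cardinality); you offer no replacement for this counting argument, and none is known.

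Two further steps in your sketch fail even if one grants the infinite board. First, the passage in lemma \ref{n} from a \ws\ for \pa\ in $F^m(\At\A)$ to $\A\notin\bold S{\sf Nr}_n\CA_m$ with \emph{plain} $\bold S$ (which is what the theorem asserts) is available only for finite $\A$, via $\C\in\bold S{\sf Nr}_n\CA_m\implies\C^+\in\bold S_c{\sf Nr}_n\CA_m$ together with $\A=\A^+$; your $\A_r$ is infinite and atomic, so a win for \pa\ would only exclude $\bold S_c{\sf Nr}_\alpha\CA_{\alpha+k+1}$, strictly weaker than what is claimed. Second, the representability of the ultraproduct cannot be obtained from the game characterization you cite: theorem \ref{rep} is stated for $2<n<\omega$ and algebras with countably many atoms, whereas $\Pi_{r/U}\A_r$ is an uncountable algebra of infinite dimension. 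The paper avoids games entirely at the infinite level: it interchanges the two ultraproducts, $\Pi_{r/U}\Pi_{\Gamma/F}\C^r_\Gamma\cong\Pi_{\Gamma/F}\Pi_{r/U}\C^r_\Gamma$, feeds in the finite-dimensional fact $\Pi_{r/U}\C(m,n,r)\in\RQEA_m$ (which supplies dilations $\A_\Gamma\in\QEA_\omega$), and obtains $\Pi_{r/U}\B^r\subseteq\Nr_\alpha\F$ for some $\F\in\QEA_{\alpha+\omega}$, whence representability by Henkin's neat embedding theorem. So both the negative and the positive halves of your outline would need to be rebuilt on the lifting scheme rather than on a direct infinite-dimensional game.
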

\begin{proof} 
The idea used here is the same idea used in \cite[Theorem 3.1]{t}. We use the same notation in {\it op.cit}.
But here the result that we lift from  the finite dimensional case is stronger than
that obtained for finite dimensions in \cite[Theorem 3.1]{t}, hence
we are rewarded by a result stronger than that obtained in \cite{t} for infinite dimensions when restricted to any
$\K$ between $\CA$ and $\QEA$.

Fix $2<m<n<\omega$. Let $\mathfrak{C}(m,n,r)$ be the algebra $\Ca(\bold H)$ where $\bold H=H_m^{n+1}(\A(n,r), \omega)),$
is the $\CA_m$ atom structure consisting of all $n+1$--wide $m$--dimensional
wide $\omega$ hypernetworks \cite[Definition 12.21]{HHbook}
on $\A(n,r)$  as defined in \cite[Definition 15.2]{HHbook}.   Then $\mathfrak{C}(m, n, r)\in \CA_m$, and it can be easily expanded
to a $\QEA_m$, since $\C(m, n, r)$ is `symmetric', in the sense  that it allows a polyadic equality 
expansion by defining substitution operations corresponding to transpositions.\footnote{
This follows by observing that $\bold H$ is obviously symmetric in the following exact sense:  
For $\theta:m\to m$ and $N\in \bold H$, $N\theta\in \bold H,$
where $N\theta$ is defined by $(N\theta)(x, y)=N(\theta(x), \theta(y)).$ 
 Hence, the binary polyadic operations defined on the atom structure 
$\bold H$ the obvious way (by swapping co--ordinates) 
lifts to polyadic operations of its complex algebra $\mathfrak{C}(m, n, r)$. In more detail, for a transposition 
$\tau:m\to m$,  and $X\subseteq \bold H$,  define $\s_\tau(X)=\set{N\in \bold H: N\tau \in X}$.}

Furthermore, for any $r\in \omega$ and $3\leq m\leq n<\omega$, $\C(m,n,r)\in {\sf Nr}_m{\sf QEA}_n$, $\Rd_{ca}\C(m,n,r)\notin {\bold  S}{\sf Nr}_m{\sf CA_{n+1}}$
and $\Pi_{r/U}\C(m,n,r)\in {\sf RQEA}_m$ by easily
adapting \cite[Corollaries 15.7, 5.10, Exercise 2, pp. 484, Remark 15.13]{HHbook}
to the $\QEA$ context.

Take
$$x_n=\{f\in H_n^{n+k+1}(\A(n,r), \omega); m\leq j<n\to \exists i<m, f(i,j)=\Id\}.$$
Then $x_n\in \C(n,n+k,r)$ and ${\sf c}_ix_n\cdot {\sf c}_jx_n=x_n$ for distinct $i, j<m$.
Furthermore (*),
$I_n:\C(m,m+k,r)\cong \Rl_{x_n}\Rd_m {\C}(n,n+k, r)$
via the map, defined for $S\subseteq H_m^{m+k+1}(\A(m+k,r), \omega)),$ by
$$I_n(S)=\{f\in H_n^{n+k+1}(\A(n,r), \omega):  f\upharpoonright {}^{\leq m+k+1}m\in S,$$
$$\forall j(m\leq j<n\to  \exists i<m,  f(i,j)=\Id)\}.$$
We have proved the (known) result for finite ordinals $>2$.

To lift the result to the transfinite,
we proceed like in \cite{t}, using a lifting argument due to Monk.
Let $\alpha$ be an infinite ordinal. Let $I=\{\Gamma: \Gamma\subseteq \alpha,  |\Gamma|<\omega\}$.
For each $\Gamma\in I$, let $M_{\Gamma}=\{\Delta\in I: \Gamma\subseteq \Delta\}$,
and let $F$ be an ultrafilter on $I$ such that $\forall\Gamma\in I,\; M_{\Gamma}\in F$.
For each $\Gamma\in I$, let $\rho_{\Gamma}$
be an injective function from $|\Gamma|$ onto $\Gamma.$
Let ${\C}_{\Gamma}^r$ be an algebra similar to $\QEA_{\alpha}$ such that
$\Rd^{\rho_\Gamma}{\C}_{\Gamma}^r={\C}(|\Gamma|, |\Gamma|+k,r)$
and let
$\B^r=\Pi_{\Gamma/F\in I}\C_{\Gamma}^r.$
Then we have $\B^r\in \bold {\sf Nr}_\alpha\QEA_{\alpha+k}$ and
$\Rd_{ca}\B^r\not\in \bold S{\sf Nr}_\alpha\CA_{\alpha+k+1}$.
These can be proved exactly like the proof of the first two items in \cite[Theorem 3.1]{t}. 
We know
from the finite dimensional case that $\Pi_{r/U}\Rd^{\rho_\Gamma}\C^r_\Gamma=\Pi_{r/U}\C(|\Gamma|, |\Gamma|+k, r) \subseteq \Nr_{|\Gamma|}\A_\Gamma$,
for some $\A_\Gamma\in\QEA_{|\Gamma|+\omega}=\QEA_{\omega}$.
Let $\lambda_\Gamma:\omega\rightarrow\alpha+\omega$
extend $\rho_\Gamma:|\Gamma|\rightarrow \Gamma \; (\subseteq\alpha)$ and satisfy
$\lambda_\Gamma(|\Gamma|+i)=\alpha+i$
for $i<\omega$.  Let $\F_\Gamma$ be a $\QEA_{\alpha+\omega}$ type algebra such that $\Rd^{\lambda_\Gamma}\F_\Gamma=\A_\Gamma$.
Then $\Pi_{\Gamma/F}\F_\Gamma\in\QEA_{\alpha+\omega}$, and we have proceeding like in the proof of item 3 in \cite[Theorem 3.1]{t}:

$\Pi_{r/U}\B^r=\Pi_{r/U}\Pi_{\Gamma/F}\C^r_\Gamma
\cong \Pi_{\Gamma/F}\Pi_{r/U}\C^r_\Gamma
\subseteq \Pi_{\Gamma/F}\Nr_{|\Gamma|}\A_\Gamma
=\Pi_{\Gamma/F}\Nr_{|\Gamma|}\Rd^{\lambda_\Gamma}\F_\Gamma
=\Nr_\alpha\Pi_{\Gamma/F}\F_\Gamma$.

But $\B=\Pi_{r/U}\B^r\in \bold S{\sf Nr}_{\alpha}\QEA_{\alpha+\omega}$
because $\F=\Pi_{\Gamma/F}\F_{\Gamma}\in \QEA_{\alpha+\omega}$
and $\B\subseteq \Nr_{\alpha}\F$, hence it is representable (here we use the neat embedding theorem).
Now it can be easily shown that for any
$\K$ between $\CA$ and $\QEA$, and positive $k$,
$\bold S{\sf Nr}_{\alpha}\K_{\alpha+k+ l}$ is not axiomatizable
by a finite schema over  $\bold S{\sf Nr}_{\alpha}\K_{\alpha+k}$ in the sense of \cite[Definition 5.4.12]{HMT2} for any $l\geq 1$.
In \cite[Theorem 3.1]{t}, the ultraproduct was proved to be in
$\bold S{\sf Nr}_{\alpha}\K_{\alpha+k+1}$ for $\K$ between $\Sc$ and $\QEA$,
a strict superset of $\sf RK_{\alpha}$. In fact, the result here is `infinitely stronger'. Using a L\'os argument, we have $\sf RK_{\alpha}$
cannot be axiomatized by a finite schema over
$\bold S{\sf Nr}_{\alpha}\K_{\alpha+m}$
for any finite $m\geq 0$
\end{proof}
In \cite[Theorem 3.1]{t} the following is proved. Let $\alpha >2$.  Then for any $r\in \omega$, for any
finite $k\geq 1$, there exist $\B^{r}\in \bold S\sf Nr_{\alpha}\QEA_{\alpha+k}$, and
$\Rd_{\Sc}\B^r\notin \bold S\sf Nr_{\alpha}\Sc_{\alpha+k+1}$ such
$\Pi_{r/U}\B^r\in \bold S\sf Nr_{\alpha}\QEA_{\alpha+k+1}.$
We do not know whether we can replace $\bold S{\sf Nr}_{\alpha}\QEA_{\alpha+k+1}$ in the conclusion by
$\RQEA_{\alpha}$, like we did in theorem \ref{2.12a} when dealing only with $\CA$s and $\QEA$s. 

Now we review the main result in \cite{t} for finite dimensions. The infinite dimensional case is obtained from
the finite dimensional one using the same  lifting argument 
used in the proof of theorem \ref{2.12a}, cf. \cite[Theorem 3.1]{t}.
The third item in our coming theorem \ref{thm:cmnr},
which is (the main theorem) \cite [Theorem 1.1]{t} is {\it strictly weaker} than the result 
(for finite dimensions) used in proof of theorem \ref{2.12a}, namely  (using the notation {\it op. cit}), 
that $\Pi_{r/U}\C(m, n, r)\in \RQEA_m$ (upon replacing $\C(m, n, r)$ by $\D(m, n, r)$.)
The theorem is due to Robin Hirsch.
\begin{theorem}\label{thm:cmnr} Let $3\leq m\leq n$ and $r<\omega$.
\begin{enumerate}
\item $\D(m, n, r)\in {\sf Nr}_m\sf QEA_n$,\label{en:one}
\item $\Rd_{\Sc}\D(m, n, r)\not\in \bold S{\sf Nr}_m\Sc_{n+1}$, \label{en:two}
\item $\Pi_{r/U} \D(m, n, r)\in {\bf El}{\sf Nr}_m\sf QEA_{n+1}$.  
\end{enumerate}
\end{theorem}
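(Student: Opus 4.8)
The three items follow the template of the proof of theorem \ref{2.12a}, with $\D(m,n,r)$ playing the role there played by $\C(m,n,r)=\Ca(H_m^{n+1}(\A(n,r),\omega))$; the genuine divergence is confined to the third item. Recall that $\D(m,n,r)$ is the complex algebra of an atom structure of $m$--dimensional hypernetworks over the Hirsch--Hodkinson relation algebra $\A(n,r)$, the parameter $r$ being designed to control how many rounds \pe\ can survive in the embedding game. For item (\ref{en:one}) the plan is to exhibit an explicit $\mathfrak{B}\in\QEA_n$ with $\D(m,n,r)\cong {\sf Nr}_m\mathfrak{B}$. Mirroring the displayed isomorphism in the proof of theorem \ref{2.12a}, I would take $\mathfrak{B}$ to be the complex algebra of the $n$--dimensional (wide) hypernetworks and realize $\D(m,n,r)$ as a relativization $\Rl_{x_n}\Rd_m\mathfrak{B}$ to a suitable element $x_n$ that forces the extra coordinates to behave diagonally, exactly as in $(\ast)$ of that proof. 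The polyadic--equality structure lifts because the hypernetwork atom structure is symmetric (closed under coordinate transpositions), so the substitution operators indexed by transpositions are well defined on $\mathfrak{B}$ and restrict correctly to $\D(m,n,r)$.

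For item (\ref{en:two}) the goal is a \ws\ for \pa\ witnessing $\Rd_{\Sc}\D(m,n,r)\notin \bold S{\sf Nr}_m\Sc_{n+1}$. By the $\Sc$--analogue of the contrapositive of lemma \ref{n}, it suffices that \pa\ win the atomic network game on $\At\D(m,n,r)$ played with the target number of nodes for dimension $n+1$. I would transport into the hypernetwork game the strategy by which \pa\ forces \pe\ to assemble a consistent network on one more node than $\A(n,r)$ can accommodate: the forbidden--triangle (colouring) constraints built into $\A(n,r)$, together with the bound coming from the $n$ available colours, make such a labelling impossible, so \pe\ must eventually produce an inconsistent triple. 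This is the same mechanism underlying the non--embeddability of $\C(m,n,r)$ in theorem \ref{2.12a}, adapted to the weaker $\Sc$ signature. The main obstacle is the combinatorial bookkeeping here --- tracking precisely how the extra node interacts with the colour constraints of $\A(n,r)$ --- which is exactly where the numerical role of $n$ enters.

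For item (\ref{en:three}) the parameter $r$ does the work. Since \pe\ wins the relevant embedding game on $\A(n,r)$ for $r$ rounds but not for $\omega$ rounds, I would fix a non--principal ultrafilter $U$ on $\omega$ and run the now--standard ``cofinitely many components'' argument used in item (1) of theorem \ref{rainbow}: amalgamating her finite--round strategies componentwise, \pe\ is still winning in $U$--almost all components at each round, so by \Los's theorem she wins the $\omega$--rounded (amalgamation--style $H$) game in $\Pi_{r/U}\D(m,n,r)$, now with target width $n+1$. Combining this win with the ultrapower--plus--elementary--chain construction (and the $L_{\infty,\omega}$--dilation analysis of theorem \ref{rainbow}(2), with $n+1$ in place of $\omega$) yields a countable algebra elementarily equivalent to $\Pi_{r/U}\D(m,n,r)$ and lying in ${\sf Nr}_m\QEA_{n+1}$; hence $\Pi_{r/U}\D(m,n,r)\in {\bf El}{\sf Nr}_m\QEA_{n+1}$. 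The elementary closure --- rather than ${\sf Nr}_m\QEA_{n+1}$ itself --- is unavoidable precisely because neat--reduct classes are only pseudo--elementary (theorem \ref{rainbow}(4)), and this is exactly why the conclusion here is strictly weaker than the $\RQEA_m$ obtained for $\C(m,n,r)$. The single hardest point of the whole proof is calibrating $\A(n,r)$ so that items (\ref{en:two}) and (\ref{en:three}) hold simultaneously: \pa\ must win the $(n+1)$--dimensional game outright, yet \pe\ must survive it for $r$ rounds, and both facts rest on the delicate interplay of the colours and the shade of red in the design of $\A(n,r)$.
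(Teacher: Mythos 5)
Your plan for item \eqref{en:two} --- the part the paper itself flags as the hardest --- is the wrong approach, on two counts. The paper's actual proof (given in detail in \cite[pp.~211--215]{t} and sketched alongside theorem \ref{thm:cmnr}) is not game--theoretic at all: assuming $\Rd_{\Sc}\D(m,n,r)\subseteq \Nr_m\C$ for some $\C\in \Sc_{n+1}$, one extracts, for each member $s$ of a suitably large set $S\subseteq \C$ and suitable pairs of coordinates $i,j$, an element $\alpha(s,i,j)\in {\sf Bin}(n,r)$ by cylindrifying away all other coordinates and substituting the remaining two to $0,1$; these induce index functions $I:n\to n-1$ and $J:n\to r$, and since $I$ cannot be injective, one repeatedly passes to a still--large subset while strictly increasing the rank ${\sf rk}(I,J)$ --- the enormous value $\psi(n,r)=\kappa((n-1)r,(n-1)r)+1$ is chosen precisely so that this step can be iterated more than $nr$ times --- until the rank exceeds $(n-1)\times(r-1)$, a contradiction. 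Your game route does not engage this. First, nothing shows that \pa\ wins an atomic game on $\At\D(m,n,r)$ with the relevant number of nodes, and your proposed mechanism (``the bound coming from the $n$ available colours'') misreads ${\sf Bin}(n,r)$: each colour--index pair $(i,j)$ carries $\psi(n,r)$ distinct atoms $a^k(i,j)$, and the forbidden triples only constrain two atoms sharing $(i,j)$ against a third with $j'\leq j$, so \pe\ is in no way quickly forced into an inconsistent triangle; the ``mechanism underlying theorem \ref{2.12a}'' is the same rank argument (adapted from \cite{HHbook}), not a game. Second, even granting an $\Sc$--analogue of lemma \ref{n}, a win for \pa\ in a network game would exclude at most membership in $\bold S{\sf Nr}_m\CA_{n+1}$ (networks and the proof of lemma \ref{n} use diagonal elements), whereas the claim for the diagonal--free signature is strictly stronger: if $\Rd_{\Sc}\D(m,n,r)\in \bold S{\sf Nr}_m\Sc_{n+1}$ were only refuted in the $\CA$ signature, an $\Sc_{n+1}$--dilation could still exist, since every $\CA_{n+1}$--dilation reducts to an $\Sc_{n+1}$--dilation but not conversely. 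The combinatorial argument uses only cylindrifiers and substitutions, which is exactly why it survives in $\Sc$. (A smaller slip of the same kind: $\D(m,n,r)=\Cm{\sf Mat}_m(\At\R)$ is built from \emph{basic matrices} over the finite relation algebra $\R$ determined by ${\sf Bin}(n,r)$ and ${\sf Forb}$ --- similar to, but not identical with, $\A(n,r)$ --- not from hypernetworks; that is precisely why $\D(m,n,r)$ is finite while $\C(m,n,r)$ is not.)

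For the third item your ``cofinitely many components'' amalgamation of strategies also misses the actual mechanism, and it misconstrues $r$: the parameter $r$ is a finite linear order built into the construction (it caps the rank at $(n-1)\times(r-1)$ in the non--embeddability induction), not a number of rounds that \pe\ survives. Since $\D(m,n,r)$ is uniformly interpretable in the order $r$, a standard L\"os argument yields $\Pi_{r/U}\D(m,n,r)\cong \D(m,n,\Pi_{r/U}r)$, and $\Pi_{r/U}r$ is an infinite linear order containing an infinite ascending sequence (one extends the definition by $\psi(n,r)=\omega$ for infinite $r$); the conclusion then follows from the result of \cite[pp.~216--217]{t} that $\D(m,n,J)\in {\bf El}{\sf Nr}_m\QEA_{n+1}$ for any such infinite linear order $J$. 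Your substitute --- an $H$--game win in the ultraproduct followed by the ultrapower/elementary--chain and $L_{\infty,\omega}$--dilation analysis of theorem \ref{rainbow}(2) --- cannot deliver this: that machinery manufactures $\omega$--dimensional dilations (giving at best $\Cm\At\B\in {\sf Nr}_n\CA_{\omega}$ or $\bold S_c$/$\bold S_d$--type information), and the paper has no converse of lemma \ref{n} converting a win in a game with $n+1$ nodes into genuine ${\sf Nr}_m\QEA_{n+1}$--membership, which is what the class ${\bf El}{\sf Nr}_m\QEA_{n+1}$ requires. Item \eqref{en:one} is the one place where your outline is close in spirit, though the paper is more direct: $\D(m,n,r)\cong {\sf Nr}_m\D(m',n,r)$ for $3\leq m\leq m'$ via the restriction map $X\mapsto \{f\in F(m',n,r): f\upharpoonright m\times m\in X\}$, with the polyadic--equality structure coming from the symmetry of the matrix atom structure; no relativization to an element $x_n$ is needed --- that device belongs to the $\C(m,n,r)$ construction in the proof of theorem \ref{2.12a}.
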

We define the algebras $\D(m,n,r)$ for $3\leq m\leq n<\omega$ and $r<\omega$.
The hardest part is proving \eqref{en:two}. This is given in detail in \cite[p. 211--215]{t}. 
We start with.
\begin{definition}\label{def:cmnr}
Define a function $\kappa:\omega\times\omega\rightarrow\omega$ by $\kappa(x, 0)=0$
(all $x<\omega$) and $\kappa(x, y+1)=1+x\times\kappa(x, y))$ (all $x, y<\omega$).
For $n, r<\omega$ let
\[\psi(n, r)=
\kappa((n-1)r, (n-1)r)+1.\]
This is to ensure that $\psi(n, r)$ is sufficiently big compared to $n, r$ for the proof 
of non-embeddability to work.
The second parameter $r<\omega$ may be considered as a finite linear order of length $r$.
For any  $n<\omega$ and any linear order $r$, let
${\sf Bin}(n, r)=\{{\sf Id}\}\cup\{a^k(i, j):i< n-1,\;j\in r,\;k<\psi(n, r)\}$
where ${\sf Id}, a^k(i, j)$ are distinct objects indexed by $k, i, j$.
Let $3\leq m\leq n<\omega$ and let $r$ be any linear order.
Here ${\sf Bin}(n,r)$ is an atom structure of a finite relation relation $\R$
and ${\sf Forb}$ specifies its operations by the standard procedure of specifying forbidden triples \cite{HHbook}.
The relation algebra $\R$;
is similar (but not identical) to $\A(n,r)$ used in the first part of the 
proof of theorem \ref{2.12a} and $\D(m,n,r)$ is defined to be $\Cm{\sf Mat}_m(\At\R)(\in \QEA_m)$. 
\end{definition}
Unlike the algebras $\C(m,n,r)$ in the proof of theorem \ref{2.12a}, 
the algebras $\D(m, n, r)$ are now finite.
It is not hard to see
that  $3\leq m,\; 2\leq n$ and $r<\omega$
the algebra $\D(m, n, r)$ satisfies all of the axioms defining $\QEA_m$.
Furthermore, if  $3\leq m\leq m'$ then $\C(m, n, r)\cong\Nr_m\C(m', n, r)$
via $X\mapsto \set{f\in F(m', n, r): f\restr{m\times m}\in X}.$

Recall that in the first part of the proof of theorem \ref{2.12a}, we had $\Pi_{r/U}\C(m, n, r)\in \RQEA_m$.
Here we do not guarantee that the ultraproduct on $r$
of $\D(m,n,r)$ ($2<m<n<\omega)$ is representable.
A standard L\"os argument shows that
$\Pi_{r/U}\C(m, n, r) \cong\C(m, n, \Pi_{r/U} r)$ and $\Pi_{r/U}r$
contains an infinite ascending sequence.
(Here  one has to extend the definition of $\psi$
by letting $\psi(n, r)=\omega,$ for any infinite linear order $r$.)
The infinite algebra
$\D(m,n, J)\in {\bf El}{\sf Nr}_n\QEA_{n+1}$
when $J$ is an infinite linear order as above.
Since $\Pi_{r/U} r$ is such, then we get $\Pi_{r/U}\D(m, n, r)\in 
{\bf El}{\sf Nr}_m\QEA_{n+1}(\subseteq \bold S{\sf Nr}_m\QEA_{n+1}$), cf. \cite[pp.216--217]{t}.
This suffices to show that for any $\K$ having signature between $\Sc$ and $\QEA$, for any $2<m<\omega$, and for any positive $k$,
the variety $\bold S{\sf Nr}_m\K_{m+k+1}$ is not finitely axiomatizable over the variety  ${\bold S}{\sf Nr}_m{\sf K}_{m+k}$.

{\bf Remark:} Let ${\sf Cs}_n$ denote the class of cylindric set algebras of dimension $n$. 
In theorem \ref{can} for each $2<n<\omega$,
an atomic, countable $\A_n\in {\sf Cs}_n$ was constructed such that $\B_n=\Cm\At\A_n\notin \bold S{\sf Nr}_n\CA_{n+3}$. 
{\it Using this notation, if for $2<k<m<\omega$, $\B_k\subseteq \Rd_m\B_m,$
then for any ordinal $k\geq 3$,  
the variety $\bold S{\sf Nr}_{\omega}\CA_{\omega+k}$ would not be atom--canonical.}
To see why, for each finite $n\geq 3$,  let $\A_n^+$ be an algebra having the signature of $\CA_{\omega}$
such that $\Rd_n\A_n^+=\A_n$. Analogously, let $\B_n^+$ be an algebra having the signature
of $\CA_{\omega}$ such that $\Rd_n\B_n^+=\B_n$, and we require in addition that $\B_n^+=\Cm(\At\A_n^+)$.
As in the proof of theorem \ref{2.12a}, $\A=\Pi_{i\in \omega}\A_i^+/F\in \RCA_{\omega}$ and $\B=\Pi_{i\in \omega}\B_i^+/F\in \CA_{\omega}$.
Also, $\Cm\At\A =\Cm(\At[\Pi_{i\in \omega}\A_n^+/F])
\cong\Cm[\Pi_{i\in \omega}(\At\A_n^+)/F)]
\cong \Pi_{i\in \omega}(\Cm(\At\A_n^+)/F)
=\Pi_{i\in \omega}\B_n^+/F
=\B.$ We now show that $\B$ is outside $\bold S{\sf Nr}_{\omega}\CA_{\omega+3}$ proving the required.
Assume for contradiction that $\B\in \bold S{\sf Nr}_{\omega}\CA_{\omega+3}$.
Then $\B\subseteq \Nr_{\omega}\C$ for some $\C\in \CA_{\omega+3}$.
Let $3\leq m<\omega$ and  $\lambda:m+3\rightarrow \omega+3$ be the function defined by $\lambda(i)=i$ for $i<m$
and $\lambda(m+i)=\omega+i$ for $i<3$.
Then $\Rd^\lambda\C\in \CA_{m+3}$ and $\Rd_m\B\subseteq \Rd_m\Rd^\lambda\C$.
By assumption, we have $\B_m$ embeds into $\Rd_m\B_{t}$
whenever $3\leq m<t<\omega$, via $I_t$ say.
Let $\iota( b)=(I_{t}b: t\geq m )/F$ for  $b\in \B_m$.
Then $\iota$ is an injective homomorphism that embeds $\B_m$ into
$\Rd_m\B$.  By the above, we have $\Rd_{m}\B\in {\bf S}{\sf Nr}_m\CA_{m+3}$, hence  $\B_m\in \bold S{\sf Nr}_{m}\CA_{m+3}$, too,
which is a contradiction and we are done.

{\bf Rainbows versus splitting:}
Rainbows also offer solace, when splitting techniques as in \cite{Andreka} (that depend essentially on diagonal elements) do not work, to show that for $2<n<\omega$, 
the variety ${\sf RDf}_n$
cannot be axiomatized by a set of universal formulas having finitely many variable. 
We give an outline of the idea which is inspired by the rainbow construction  for relation algebras in \cite[\S 17.3]{HHbook}. 
Fix $2<n<\omega$ and finite $m>1$ and let $\sf K$ be a class whose signature is between that of ${\sf Df}_n$ and ${\sf CA}_n$.  
One can construct two  finite simple rainbow 
algebras $\A,\B\in \CA_n$ that satisfy the following. The $n$--coloured graphs (atoms) 
in both algebras are the same, except for the red atoms ($n$--coloured graphs having at least one edge labelled by a red). 
The rainbow signature of $\A$ has more red colours, thus $\A$ has more red atoms than $\B$, enabling  \pe\ to win the game $G_{\omega}(\At\A)$
implying that $\A\in \RCA_n$.  In $\B$ there are fewer red atoms; the greens outfit the reds, so \pa\ can win $G_{\omega}(\At\B)$
so that $\B\notin \RCA_n$.  Since $\B$ is generated by the set $\{b\in \B: \Delta b\neq n\}$, then by \cite[Theorem 5.4.26]{HMT2}, 
it will follow that 
the diagonal free reduct $\Rd_{df}\B$ is not in ${\sf RDf_n}$. Inspite of the discrepancy in the number of red atoms in $\A$ and $\B$, 
to the extent that $\A$ is representable, while the (diagonal free reduct of) $\B$ is not representable, this discrepancy will not be witnessed by $m$--variable equations. 
For any assignment $s:m\to \A$, for any equation $e$ in the signature of $\K$ using $m$ variables, one construct an assignment $s':m\to \B$ by adjusting the (fewer) red atoms of $\B$ 
below $s'(i)$ for each $i<m$, by putting  in there `enough' red atoms   
in such a way  that  $\Rd_{\K}\B, s'\models e\iff \Rd_{\K}\A, s\models e$.  
This can   be done for each $m>1$. 
Now if $\Sigma$ is any $m$--variable equational theory then the $\K$  reduct of $\A$ and $\B$ either both validate
$\Sigma$ or neither do. Since one algebra is in ${\sf RK}_n$ (representable $\K_n$s) 
while the other is not, it follows that $\Sigma$ does not axiomatize 
${\sf RK}_n$.   But $\Rd_{\K}\A$ and $\Rd_{K}\B$ are simple algebras,
so ${\sf RK}_n$ has no finite variable universal prenex axiomatization because 
in a discriminator variety every universal prenex formula  is equivalent in subdirectly irreducible (hence simple) members to an
equation using the same number of variables.  From this it follows that, for $2<n<\omega$, there 
is no universal axiomatization containing finitely many variables for ${\sf RDf}_n$. 

Let us see how to get the right (finite) number of reds and greens so that the above idea works. Remember that $n$ and $m$ are fixed; $2<n<\omega$ and $1<m<\omega$.  
First one takes $\lambda=(n\times 2^m)^3$. 
Then the greens for both $\A$ and $\B$ will be  ${\sf G}=\lambda+2$. For $\A$, the reds are 
${\sf R}_{\A}=[(\lambda+1)\times (\lambda+2)]/2$
and for $\B$ the reds are ${\sf R}_{\B}=\lambda<[(\lambda+1)(\lambda+2)]/2$. 
Let $\R$ be the red atoms of $\A$ and $\R'$ be the red atoms of $\B$. 
Then $|\R'|\geq n\times 2^m$. Let $e$ an equation using $m$ variables. 
Assume that $s:m\to \A$ falsifies $e.$
We define $s':m\to \B$ that falsifies $e$, too. (The converse is entirely analogous).
We denote a red atom 
by $\r$. Define a partition $(\R_S: S\subseteq m)$ of
$\R$ where $\R_S=\{\r\in \R: \r\leq s(i)\in S,  \r\cdot s(i): i\in m\sim S\}$. (Here $\R_S$ can be empty).
Because $|\R'|\geq n\times 2^m$,  one can define another  partition $\mathfrak{P}=(\R'_S:S\subseteq m)$ of $\R'$
such that $\R_S\subseteq s(i)\iff \R'_S\subseteq s'(i)$ for all $S\subseteq m$ and $i<m$, 
and using $\mathfrak{P}$ it can be arranged that for each $i<m$, the number of red atoms of $\A$ under $s(i)$ 
is the same as the number of atoms of 
$\B$ under $s'(i)$ if this number is $<n$ and both are greater than $n$ otherwise, by putting the red atom $r'$ below $s'(i)$ if $\r'\in \R'_S$ for some
$S$ with $i\in S$. The other atoms are the same for both algebras. Then $s'(i)$ is defined to be the sum of the atoms below it, the non--red ones being the same as the ones below $s(i).$
Then it can be shown $s'$ is as required.  

Unlike `usual rainbow constructions' where reds have double indices,
here reds have single indices, so for example in $\B$ the reds are
$\{\r_i: i\in \lambda\}$.  By definition a triangle of reds $(\r_i, \r_j, \r_k)$ is consistent  $\iff |\{i, j, k\}|=3$. The number of all other colours (uniquely determined by the red and green colours) 
is the same for both algebras.  
In the game played on coloured graphs,  \pa\ can force a red clique of size $\lambda+2$ and not more, by playing $\lambda+2$ cones having the same base and distinct 
green tints. Not to lose, 
\pe\ has to choose a label for each edge between two successive appexes of cones, using
a red colour, and she has to ensure that each edge within the clique has a red label unique to this edge, so that no
triangle (within this clique) has two  reds with same index.
She will not succeed if the number of reds is $\leq \lambda$, there will have to come a 
a round in the play where she will be forced to play an inconsistent triple of reds $(\r, \r, \r')$
in which case \pa\ wins after a finite $l(\geq \lambda+2)$ many rounds.  
He plays as follows: In his zeroth move, \pa\ plays a graph $\Gamma$ with
nodes $0, 1,\ldots, n-1$ and such that $\Gamma(i, j) = \w_0 (i < j <
n-1), \Gamma(i, n-1) = \g_i ( i = 1,\ldots, n-2), \Gamma(0, n-1) =
\g^0_0$, and $ \Gamma(0, 1,\ldots, n-2) = \y_{\lambda+2}$. This is a $0$-cone
with base $\{0,\ldots, n-2\}$. In the following moves, \pa\
repeatedly chooses the face $(0, 1,\ldots, n-2)$ and demands a node
$t$ with $\Phi(i,t) = \g_i$, $(i=1,\ldots, n-2)$ and $\Phi(0, t) = \g^t_0$,
in the graph notation -- i.e., an $t$-cone, $t\leq \lambda+2$,  on the same base.
\pe\ among other things, has to colour all the edges
connecting $\lambda+2$ nodes $n_0, n_1, \ldots n_{\lambda+1}$ created by \pa\ as apexes of cones based on the face $(0,1,\ldots, n-2)$ by red labels. 
But there are only $\lambda$ red labels, so there must be $0<i\neq j<\lambda+2$, such that in the last coloured graph 
$\Lambda$,  $\Lambda(n_0, n_i)=\Lambda(n_0, n_j)$. But
$(n_0, n_i, n_j)$ is inconsistent, so \pa\ wins.
The conclusion now follows.
Thus $\Rd_{df}\B\notin {\sf RDf}_n$.
On the other hand, the number of reds in $\A$ is 
$\geq  [(\lambda+1)\times (\lambda+2)]/2$, so  
\pe\ can always choose a red label avoiding inconsistent red triangles. In this case she can cope with 
such red cliques  `indexed' by the green tints, hence $\A\in \RCA_n$.

\section{Metalogical appplications}

Throughout this section, unless otherwise indicated $n$ is a finite ordinal. 
Results in algebraic logic are most attractive when they lend themselves to (non--trivial) applications in (first order) logic.
In this section, we apply the hitherto obtained results on (non) atom--canonicity to (failure of) omitting types theorems for 
the so--called {\it clique guarded} $n$--variable fragments
of first order logic. We start with defining the notion of {\it clique guarded semantics}.
\begin{definition}
Assume that $1<n<m<\omega$. Let $M$ be a {\it relativized representation} of $\A\in \CA_n$, that is, there exists an injective
homomorphism $f:\A\to \wp(V)$, where $V\subseteq {}^nM$ and $\bigcup_{s\in V} \rng(s)=M$. Here we identify notationally the set algebra with universe $\wp(V)$ with
its universe $\wp(V)$.
We write $M\models a(s)$ for $s\in f(a)$. Let  $\L(\A)^m$ be the first order signature using $m$ variables
and one $n$--ary relation symbol for each element in $A$. Then {\it an $n$--clique} is a set $C\subseteq M$ such
$(a_1,\ldots, a_{n-1})\in V=1^M$
for distinct $a_1, \ldots, a_{n}\in C.$
Let
${\sf C}^m(M)=\{s\in {}^mM :\rng(s) \text { is an $n$--clique}\}.$
Then ${\sf C}^m(M)$ is called the {\it $n$--Gaifman hypergraph of $M$}, with the $n$--hyperedge relation $1^M$.

The {\it clique guarded semantics $\models_c$} are defined inductively. For atomic formulas and Boolean connectives they are defined
like the classical case and for existential quantifiers
(cylindrifiers) they are defined as follows:
for $\bar{s}\in {}^mM$, $i<m$, $M, \bar{s}\models_c \exists x_i\phi$ $\iff$ there is a $\bar{t}\in {\sf C}^m(M)$, $\bar{t}\equiv_i \bar{s}$ such that
$M, \bar{t}\models \phi$.

We say that $M$  is  {\it $m$--square},
if  $\bar{s}\in {\sf C}^m(M), a\in \A$, $i<n$,
and   $l:n\to m$ is an injective map, $M\models {\sf c}_ia(s_{l(0)},\ldots, s_{l(n-1)})$,
$\implies$ there is a $\bar{t}\in {\sf C}^m(M)$ with $\bar{t}\equiv _i \bar{s}$,
and $M\models a(t_{l(0)}, \ldots, t_{l(n-1)})$. $M$ is said to be {\it $m$--flat} if  it is $m$--square and
for all $\phi\in \L(\A)^m$, for all $\bar{s}\in {\sf C}^m(M)$, for all distinct $i,j<m$,
$M\models_c [\exists x_i\exists x_j\phi\longleftrightarrow \exists x_j\exists x_i\phi](\bar{s}).$
\end{definition}

\begin{definition}\label{strongblur}\cite[Definition 3.1]{ANT}
Let $\R$ be a relation algebra, with non--identity atoms $I$ and $2<n<\omega$. Assume that  
$J\subseteq \wp(I)$ and $E\subseteq {}^3\omega$.
We say that $(J, E)$  is an {\it $n$--blur} for $\R$, if $J$ is a {\it complex $n$--blur}
and the tenary relation $E$ is an {\it index blur} defined  as 
in item (ii) of \cite[Definition 3.1]{ANT}.
We say that $(J, E)$ is a {\it strong $n$--blur}, if it $(J, E)$ is an $n$--blur,  such that the complex 
$n$--blur  satisfies the condition $(J5)_n$ on \cite[pp.79]{ANT}.
\end{definition}
We need the following lemma proved in \cite{mlq}.
\begin{lemma}\label{square}
Let $2<n<m<\omega$ and $\A\in \CA_n$.
Then $\A$ has an $m$--flat representation $\iff$ $\A\in \bold S{\sf Nr}_n\CA_m$ and $\A$ has a complete  $m$--square representation $\iff$  
\pe\ has a \ws\ in $G_{\omega}^m(\At\A).$ 
\end{lemma}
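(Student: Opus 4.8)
The plan is to prove the two stated equivalences separately, in each case adapting to the cylindric setting the corresponding argument given for relation algebras in \cite{r}, along the lines already used in lemma \ref{n} and theorem \ref{rep}.

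I would first treat the equivalence $\A$ has an $m$--flat representation $\iff \A\in\bold S{\sf Nr}_n\CA_m$. For the forward implication, starting from an $m$--flat representation $M$ of $\A$, I would manufacture a dilation $\C\in\CA_m$ with $\A\subseteq{\sf Nr}_n\C$ directly from the clique--guarded semantics. Take $\C$ to be the set algebra with universe $\{\phi^{{\sf C}^m(M)}:\phi\in\L(\A)^m\}$, where $\phi^{{\sf C}^m(M)}=\{s\in{\sf C}^m(M):M\models_c\phi[s]\}$, equipping it with diagonals $\diag ij$ ($i,j<m$) and cylindrifiers $\cyl i$ ($i<m$) read off from $\models_c$. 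The $m$--square clause guarantees that each $\cyl i$ behaves as a genuine cylindrification, while the defining flatness clause $M\models_c[\exists x_i\exists x_j\phi\leftrightarrow\exists x_j\exists x_i\phi]$ is exactly what verifies the one $\CA_m$ axiom, commutativity of cylindrifiers $\cyl i\cyl j=\cyl j\cyl i$, that can fail in a merely square representation; the remaining axioms are immediate. Since each $a\in\A$ depends only on its first $n$ coordinates, the assignment $a\mapsto\{s\in{\sf C}^m(M):M\models_c a(s)\}$ lands inside ${\sf Nr}_n\C$ and, by injectivity of the representation, embeds $\A$ into ${\sf Nr}_n\C$. For the converse, given $\A\subseteq{\sf Nr}_n\C$ with $\C\in\CA_m$, I would build an $m$--flat relativized representation by a step--by--step construction that assembles overlapping $m$--cliques out of the $m$--dimensional structure of $\C$, the key point being that commutativity $\cyl i\cyl j=\cyl j\cyl i$, valid in $\C$ for all $i,j<m$, forces flatness of the resulting model; this mirrors the relation--algebra argument of \cite{r}.

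I would then treat the equivalence $\A$ has a complete $m$--square representation $\iff$ \pe\ has a \ws\ in $G_\omega^m(\At\A)$. The direction from representation to strategy is routine: \pe\ maintains atomic networks with at most $m$ nodes that embed into $M$, answering each cylindrifier move of \pa\ by invoking $m$--squareness of $M$ to supply the required witness, while completeness of $M$ keeps every network label an atom. For the converse I would run the familiar saturated step--by--step argument (as in \cite[Theorem 33]{r} and theorem \ref{rep}): playing $\omega$ rounds with every potential cylindrifier requirement scheduled cofinally often, I extend the current partial representation at each stage by a new $m$--node network supplied by \pe's strategy that overlaps the existing structure, and read off a representation on the direct limit of its base. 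Here $m$--squareness follows because every cylindrifier demand is met inside a clique of at most $m$ nodes, and completeness follows since the networks are atomic throughout, so every point of the base realises an atom. Note that the $m$--node bound yields only squareness, not flatness, which is why this half produces a complete $m$--square (and not $m$--flat) representation.

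The main obstacle in both equivalences is the same: the passage from algebraic or game--theoretic data to an actual relativized representation, that is, the step--by--step construction that simultaneously keeps all network labels consistent, realises every cylindrifier witness, and respects the $m$--node bound. The point requiring the most care is the bookkeeping that identifies flatness of $M$ with the cylindrifier--commutativity law in the dilation $\C$; tracking this correspondence faithfully in both directions is precisely where the relation--algebra arguments of \cite{r} must be re--cast for $\CA_n$, with the composition triangles there replaced by the $n$--ary network consistency conditions of definition \ref{game}.
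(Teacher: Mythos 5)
The paper itself contains no proof of lemma \ref{square} -- it is imported wholesale from \cite{mlq} -- so your proposal must be measured against the proof there and against its relation--algebra template in \cite{r} and \cite{HHbook}. Three of your four implications are the standard arguments and are correct as sketched: the clique--guarded term algebra built from an $m$--flat representation is indeed a $\CA_m$ (flatness supplying exactly the commutativity axiom $\cyl i\cyl j=\cyl j\cyl i$, which mere squareness cannot), and your two directions for the second equivalence are routine. The genuine gap is in the implication $\A\in\bold S{\sf Nr}_n\CA_m\Rightarrow\A$ has an $m$--flat representation, in two respects. First, a step--by--step construction ``out of the $m$--dimensional structure of $\C$'' cannot even get started on $\C$ itself: $\C$ need not be atomic, so cylindrifier demands have no atomic witnesses; one must first pass to canonical extensions, using $\A\subseteq{\sf Nr}_n\C\implies\A^+\subseteq_c{\sf Nr}_n\C^+$ with $\C^+$ atomic, exactly the device invoked at the end of the proof of lemma \ref{n}. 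Second, and more substantively, a step--by--step construction on plain atomic networks delivers only an $m$--\emph{square} representation: the law $\cyl i\cyl j=\cyl j\cyl i$ holding in $\C$ does not transfer by itself to the clique--guarded semantics of the limit model, because flatness is a homogeneity property of the constructed base, not a local witnessing property. Your sentence ``commutativity, valid in $\C$, forces flatness of the resulting model'' is precisely the statement to be proved, not an argument. What secures it in \cite{r}, \cite{HHbook} and \cite{mlq} is an extra layer your proposal never introduces: hypernetworks and $m$--dimensional hyperbases, i.e.\ networks carrying hyperedge labels (the $\lambda$--neat labelling this paper itself uses in item (2) of theorem \ref{rainbow} and in theorem \ref{stronger}). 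One extracts a hyperbasis from $\C^+$, and the amalgamation properties of the hyperlabels make the limit representation $m$--smooth, hence $m$--flat; without hyperedges the construction proves square, not flat, and the first equivalence collapses to the second.

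A smaller but real defect lies in your converse for the second equivalence: scheduling ``every potential cylindrifier requirement cofinally often'' in a single $\omega$--round play presupposes that $\At\A$ is countable, whereas the lemma carries no countability hypothesis. The countability--free route does not use a play at all: from a \ws\ for \pe\ in $G^m_{\omega}(\At\A)$ one extracts the set of positions from which she can still win, verifies that this set is an $m$--dimensional relational basis (the finite bound $m$ on nodes is what makes this work), and then builds the complete $m$--square representation by saturating along the basis, transfinitely if need be, as in the relational--basis machinery of \cite{HHbook}. With these two repairs -- canonical extensions plus hyperbases for flatness, and winning positions plus relational bases in place of $\omega$--scheduling -- your outline becomes the proof in \cite{mlq}; as written, it establishes only the square half of the lemma.
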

Now fix $2<n<\omega$. Let $n\leq l<m\leq \omega$. Consider results of the following form, $\Psi(l, m) (\Psi(l, m)_f)$ for short: 
{\it There is an atomic, countable and complete $L_n$ theory $T$, such that the type $\Gamma$ consisting of co--atoms is realizable 
in every $m$-- square (flat) model, but any formula isolating this type has
to contain more than $l$ variables.}

By an $m$--flat model of $T$ we understand an $m$--flat representation of the Tarski--Lindenbaum quotent algebra $\Fm_T$.
By $T$ atomic, we mean 
that the Boolean reduct of $\Fm_T$ is atomic. The type $\Gamma$ is defined to be 
$\{\phi: (\neg \phi)_T\in \At\Fm_T\}$. The theory $T$ is complete means that for any $L_n$ sentence $\phi$ in the signature of $T$, 
$T\models \phi$ or $T\models \neg \phi$.
This, in turn,  is equivalent to that $\Fm_T(\in \RCA_n$) 
is simple (has no proper congruences), hence $\Fm_T\in {\sf Cs}_n$. 
\begin{theorem}\label{2.12} Fix $2<n\leq l<m< \omega$. Then the following hold:
\begin{enumarab}
\item $\Psi(l, \omega)$ and $\Psi(n, n+k)$ are true for $k\geq 3$.
\item If there exists a finite relation algebra 
$\R_m$ that has  a strong $m$--blur, but does not have an 
infinite $m+1$--dimensional hyperbases, 
then $\Psi(l, m+1)_f$ is true. If $\R_m$ has no $m+1$--dimensional {\it relational basis}, then $\Psi(l, m+1)$ is true.
\end{enumarab}
\end{theorem}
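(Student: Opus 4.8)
The plan is to read $\Psi(l,m)$ and $\Psi(l,m)_f$ algebraically and then feed in the non--atom--canonicity results of \S3 through Lemma \ref{square}. Write $T$ for an atomic, complete $L_n$ theory whose Tarski--Lindenbaum algebra $\Fm_T=\A$ is a countable, atomic, simple $\RCA_n$ (so $\A\in\Cs_n$), and let $\Gamma=\{\phi:(\neg\phi)_T\in\At\Fm_T\}$ be the co--atom type. The dictionary I will use is: a relativized model $M$ of $T$ \emph{omits} $\Gamma$ iff the associated representation of $\A$ is \emph{complete} (sends $\sum\At\A$ to the top), so ``$\Gamma$ is realized in every $m$--square (flat) model'' is the same as ``$\A$ has no complete $m$--square (flat) representation''. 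The lever for the latter is that a complete $m$--square (resp. flat) representation $f$ of the atomic $\A$ extends, via $g(b)=\bigcup\{f(a):a\in\At\A,\ a\leq b\}$, to a representation of $\Cm\At\A$ on the same base, which is again $m$--square (resp. flat); by Lemma \ref{square} the flat case then forces $\Cm\At\A\in\bold S{\sf Nr}_n\CA_m$ and the square case forces \pe\ to have a \ws\ in $G^m_{\omega}(\At\A)$. The second conjunct of $\Psi$, that no formula on $\leq l$ variables isolates $\Gamma$, is just the non--principality $\prod^{\A}\Gamma=0$, automatic from atomicity (nothing non--zero sits below every co--atom) and preserved in the finite--dimensional dilations of the representable $\A$; this secures the variable lower bound for every $l\geq n$.

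Part (1). For $\Psi(n,n+k)$ with $k\geq3$ take $\A$ as in Theorem \ref{can}, so $\Cm\At\A\notin\bold S{\sf Nr}_n\CA_{n+3}\supseteq\bold S{\sf Nr}_n\CA_{n+k}$ and hence $\Cm\At\A\notin\bold S{\sf Nr}_n\CA_{n+k}$. Were $\Gamma$ omissible in an $(n+k)$--flat model, the induced $(n+k)$--flat representation of $\Cm\At\A$ would contradict this by Lemma \ref{square}, giving $\Psi(n,n+k)_f$. For the stronger square form I use that the finite rainbow algebra $\CA_{n+1,n}$ embeds completely into $\Cm\At\A$ and that \pa\ wins $G^{n+k}_{\omega}(\At\CA_{n+1,n})$ for all $k\geq3$: his rainbow strategy (as analysed in Theorem \ref{can} and \cite{mlq}) only exploits the mismatch of the $n+1$ green tints against the $n$ reds, which the underlying game ${\sf EF}^{p}_{r}(n+1,n)$ resolves in favour of \pa\ for \emph{every} $p\geq n+1$ pebble pairs, so the surplus nodes are harmless. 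A complete $(n+k)$--square representation of $\A$ would, composed with this complete embedding and using finiteness of $\CA_{n+1,n}$, hand \pe\ a \ws\ in $G^{n+k}_{\omega}(\At\CA_{n+1,n})$ --- a contradiction. For $\Psi(l,\omega)$, where an $\omega$--square model is an ordinary representation, the same $\A$ works: it is not completely representable (a complete representation would represent $\Cm\At\A\notin\RCA_n$), so $\Gamma$ is realized in every model, while $\prod^{\A}\Gamma=0$ gives non--principality for all $l\geq n$.

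Part (2). Here I replace the rainbow input by a blow--up--and--blur of the hypothesised finite relation algebra $\R_m$, following \cite{ANT} and Definition \ref{strongblur}. The strong $m$--blur (in particular the clause $(J5)_n$) makes the blown--up atom structure ${\bf At}$ weakly representable, so that $\Tm{\bf At}\in\RCA_n$ is atomic and countable and, after passing to a simple (hence $\Cs_n$) algebra as above, supplies the required $T$. The absence of an infinite $(m+1)$--dimensional hyperbasis for $\R_m$ translates, through the matrix/basis machinery of \cite{HHbook}, into $\Cm{\bf At}\notin\bold S{\sf Nr}_n\CA_{m+1}$, which as in Part (1) rules out a complete $(m+1)$--flat representation and yields $\Psi(l,m+1)_f$; replacing hyperbases by relational bases, the absence of an $(m+1)$--dimensional relational basis rules out a complete $(m+1)$--square representation and yields $\Psi(l,m+1)$. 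Non--principality is once more $\prod^{\A}\Gamma=0$.

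The main obstacle, in both parts, is the faithful passage between algebra and logic at the \emph{prescribed} dimension: one must verify that omitting $\Gamma$ is genuinely equivalent to a \emph{complete} relativized representation of the correct square/flat type, and that such a representation really descends to $\Cm\At\A$ (or, in the square case, to the embedded finite rainbow subalgebra) so that the non--membership in $\bold S{\sf Nr}_n\CA_{m}$ of Part (1), or the missing $(m+1)$--dimensional (hyper)basis of Part (2), can be brought to bear. For Part (2) the technical heart is the content imported from \cite{ANT}: that the strong $m$--blur yields a representable term algebra while its Dedekind--MacNeille completion loses the $(m+1)$--dimensional (hyper)basis. Once these correspondences are in place the failures of omitting types follow uniformly from Lemma \ref{square} and the complete--representation/omitting dictionary.
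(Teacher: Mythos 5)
Your overall architecture does match the paper's: the dictionary between omitting the co--atom type $\Gamma$ and the existence of a complete relativized representation, the descent from a complete $m$--square (flat) representation of $\A$ to an ordinary one of $\Cm\At\A$ and thence to the embedded finite algebra, the appeal to Lemma \ref{square}, theorem \ref{can} as input for $\Psi(n,n+3)$ (whence $\Psi(n,n+k)$ for $k\geq 3$), and blowing up and blurring $\R_m$ in place of the Maddux algebra for item (2). The genuine gap is your treatment of the second conjunct of $\Psi(l,m)$. You assert that ``no formula on $\leq l$ variables isolates $\Gamma$'' is ``just the non--principality $\prod^{\A}\Gamma=0$, automatic from atomicity \dots preserved in the finite--dimensional dilations'', and that this ``secures the variable lower bound for every $l\geq n$''. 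That is correct only for $l=n$: a putative isolating formula using $l>n$ variables is not an element of $\A=\Fm_T$ at all, but of an $l$--dimensional algebra (the $l$--variable Lindenbaum--Tarski dilation), where atomicity of $\A$ by itself says nothing. To exclude such witnesses one needs the neat--reduct identity $\C_n\cong {\sf Nr}_n{\sf Bl}_l(\R,J,E)$, which is precisely what the \emph{strong} $l$--blur supplies via condition $(J5)_l$ --- a condition you instead credit with weak representability of $\bf At$, which is wrong: a plain $n$--blur already yields weak representability, and $(J5)$ buys exactly the neat embedding --- together with the fact that neat reducts sit completely inside their dilations (so the atoms of $\C_n$ still sum to $1$ there) and the witness argument of \cite[Theorem 3.1]{ANT}. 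Since $l\leq m$, a strong $m$--blur is a strong $l$--blur, which is why the hypothesis of item (2) is stated the way it is; without it your claimed lower bound for $l>n$ simply does not follow.

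The same conflation vitiates your derivation of $\Psi(l,\omega)$ in part (1): the rainbow algebra of theorem \ref{can} is not known (and is not claimed) to lie in ${\sf Nr}_n\CA_l$ for $l>n$, so ``$\prod^{\A}\Gamma=0$ gives non--principality for all $l\geq n$'' does not deliver the more--than--$l$--variables clause for that algebra. The paper instead obtains $\Psi(l,\omega)$ from the Maddux algebras $\mathfrak{E}_k(2,3)$ equipped with strong $l$--blurs, cf.\ \cite{ANT} and \cite[Theorem 3.1.1]{Sayed} --- a different input from your rainbow--based one, and unavoidably so. A smaller point you gloss in the square half of item (2): the hypothesis must exclude \emph{any} $(m+1)$--dimensional relational basis, finite ones included, because a finite relation algebra with an infinite $(m+1)$--dimensional relational basis automatically has a finite one \cite[Theorem 19.18]{HHbook}; your translation through the matrix/basis machinery silently relies on this, whereas in the flat case only \emph{infinite} hyperbases need be excluded.
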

\begin{proof}
$\Psi(l, \omega)$ follows easily from the construction in \cite{ANT}, cf. \cite[Theorem 3.1.1]{Sayed}. 
We prove $\Psi(n, n+3)$.  Let $\A$ be the infinitely countable, atomic and simple algebra  
obtained by blowing up and blurring $\CA_{n+1, n}$ as in theorem \ref{can}. 
We can identify $\A$ with $\Fm_{T}$ for some countable, consistent
and complete theory $T$ (since $\A$ is simple) 
using $n$ variables, and because $\A$ is atomic, $T$ is an atomic theory, as well.
Let $\Gamma=\{\neg\phi: \phi_T\in \At\Fm_{m,T}\}$ be the type consisting of co--atoms.
Then $\Gamma$ is
a non--principal type. But  $\Gamma$ cannot be omitted
in an {\it $n+3$--square} model for such a  model necessarily gives  a {\it complete} $n+3$--square representation
of $\A$, which gives an (ordinary) $n+3$--square representation of $\Cm\At\A$. This in turn induces an $n+3$--square representation of $\CA_{n+1, n}$,
because $\CA_{n+1, n}$ embeds into $\Cm\At\A$. Then  by lemma \ref{square} \pe\ has a \ws\ in $G_{\omega}^{n+3}(\At\CA_{n+1, n}),$
{\it a fortiori} in the game $F^{\omega}(\At\CA_{n+1, n})$ (in only finitely many rounds) where he is allowed to use the $n+3$ nodes in play.
This contradicts the proof of theorem \ref {can}.  
We have shown that $\Gamma$ is realized in every $n+3$ model, but it cannot be isolated by a formula using only
$n$-variables, lest $\Gamma$ will be a principal type, which is not the case. 
We conclude, as required, that $\Psi(n, n+3)$ is true.

Now we prove the second item. The idea is that one blows up and blur $\R_m$ in place of the Maddux algebra 
$\mathfrak{E}_k(2, 3)$ having $k$ non identity atoms dealt with in \cite[Lemma 5.1]{ANT}.   
We use the notation in \cite{ANT}.
Fix $2<n\leq l\leq m<\omega$.  Denote $\R_m$ by $\R$. Let $(J, E)$ be the strong $m$--blur of $\R$. Since $l\leq m$, 
$(J, E)$ is a strong $l$--blur of $\R$. Let ${\cal R}={\sf Bb}(\R, J, E)$ with atom structure  
whose underlying set is denoted by $At$ on \cite[p.73]{ANT}. 
 Let $\C_n={\sf Bl}_n(\R, J, E)\in \CA_n$ defined as in \cite[Top of p. 78]{ANT}. By  $l$--blurness we 
get that for  $\cal R$, the set ${\sf Mat}_l(\At\cal R)$ of $l$ by $l$ dimensional matrices on $\cal R$
is an $l$--dimensional cylindric basis \cite[Theorem 3.2]{ANT} which is 
the atom structure of $\C_l={\sf Bl}_l(\R, J, E)$, and by strong blurness we get by \cite[item (3) p.80]{ANT} that  $\C_n=\Nr_n\C_l$.  
The algebra $\Cm\At\C_n$ does not have an $m+1$-- flat representation because $\R$ embeds into 
$\Cm\At(\cal R)$ which embeds into $\sf Ra\Cm\At\C_n$. So an $m+1$--dimensional flat representation of $\Cm\At\C_n$ induces an infinite $m+1$--
flat representation of $\R$. But this is impossible because by hypothesis 
$\R$ does not have an infinite $m+1$--dimensional hyperbases. Hence $\C_n$ does not have a complete $m+1$--flat representation.

Like before, we can assume that $\C_n= \Fm_T$ for a countable, atomic theory $L_n$ theory $T$.  
Let $\Gamma$ be the type consisting of co--atoms of $T$.
Then $\Gamma$ is realizable in every $m+1$--flat model, for if $M$ is an $m+1$--flat model omitting 
$\Gamma$, then it induces a complete  $m+1$--flat  representation  of $\Fm_T=\C_n.$ 
Since $\C_n\in {\sf Nr}_n\CA_l$,  then, using the argument in \cite[Theorem 3.1]{ANT}, 
any witness isolating $\Gamma$  needs more than $l$--variables. For squareness, the reasoning is the same by observing 
that here we have to exclude the existence of {\it any $m+1$--dimensional relational  basis of $\R$ even a finite one} for 
and any finite relation algebra having an (infinite)  $m+1$--dimensional relational basis, 
has a finite one \cite[Theorem 19.18]{HHbook}.  This, in turn, 
excludes the existence of an $m+1$--dimensional square representation 
of $\Cm\At\C_n$, since, like before $\R$ embeds into 
$\Ra\Cm\At\C_n$, and we are done. 
\end{proof}
Now we prove differently the results on non--finite axiomatizability given in example \ref{Monk} above. In fact, we give a finer result.
Fix $2<n<\omega$. For each $2<n\leq l<\omega$, 
let $\R_l$ be the finite Maddux algebra $\mathfrak{E}_{f(l)}(2, 3)$ with strong $l$--blur
$(J_l,E_l)$ and $f(l)\geq l$ as specified in theorem \ref{2.12}.  
Let ${\cal R}_l={\sf Bb}(\R_l, J_l, E_l)\in \sf RRA$ and 
let $\A_l=\Nr_n{\sf Bb}_l(\R_l, J_l, E_l)\in \RCA_n$, as defined on \cite[pp.78]{ANT} and the second part of 
theorem \ref{2.12}.  Then  $(\At{\cal R}_l: l\in \omega\sim n)$, and $(\At\A_l: l\in \omega\setminus n)$ are sequences of weakly representable atom structures 
that are not strongly representable with a completely representable 
ultraproduct. 
The (complex algebra) sequences $(\Cm \At{\cal R}_l: l\in \omega\setminus n)$, 
$(\Cm\At\A_l: l\in \omega\setminus n$) are typical examples of `bad' Monk (non--representable) algebras 
converging to  a 'good' (representable) one, namely, their (non--trivial) ultraproduct. 
Here we also have that for $2<n\leq k <m<\omega$, $\A_k=\Nr_k\A_m$. 
Using a standard Los argument, this recovers Monk's and Maddux's stronger result \cite{Monk, Maddux} 
on non--finite axiomatizability of $\sf RRA$s and $\RCA_n$s
and the class of algebras satisfying the Lyndon conditions for both $\sf RRA$ and $\CA_n$ 
since algebras considered are generated by a 
single $2$--dimensional element \cite{ANT, Sayed}.

\section{Summary of results in tabular form}
Throughout this section fix $2<n<\omega$.

{\bf Atom--canonicity:} In the next table we address atom--canonicity for classes of relation and cylindric algebras. 
Such classes are defined via the operators ${\sf Ra}$  of taking relation algebras reducts, and $\sf Nr_n$  
of taking $n$--neat reducts, respectively. $\sf RRA$ denotes the class of representable $\RA$s.
In the table $k\geq 3$ and $m\geq 6$. It is known that $\bold S{\sf Nr}_n\CA_{n+1}$, $\bold S{\sf Ra}\CA_3$ and $\bold S{\sf Ra}\CA_4$ are 
atom--canonical; the first class of $\CA_n$s admits a finite Sahlqvist axiomatization, a result of Andr\'eka, and for the relation algebra cases, cf. \cite[p.531]{HHbook}.
\vskip2.2mm
\begin{tabular}{|l|c|c|c|c|c|c|}    \hline
					Algebras	                     &{\sf Atom--canonical}                         & {\sf Canonical}  \\

                                                               \hline
                                                                          $\sf RCA_n, \sf RRA$&no &yes\\

                                                               \hline
                                                                          $\bold S{\sf Nr}_n\CA_{n+2}$, $\bold S{\sf Ra}\CA_{5}$&?  &yes\\

                                                               \hline
                                                                          $\bold S{\sf Nr}_n\CA_{n+k}$, $\bold S{\sf Ra}\CA_{m}$&no, thm \ref{can}, \cite[Thm 17.37]{HHbook}&yes\\

\hline

\end{tabular}

\vskip2.3mm
The results in the second row are known \cite{HH, Hodkinson, HHbook, strongca}. 
The third row involves open questions. It can be shown that $\bold S{\sf Ra}\CA_{5}$ and $\bold S{\sf Nr}_n\CA_{n+2}$ are not atom--canonical if there exists a finite relation algebra 
having an $n$--blur (not neccessarily strong) 
but has no infinite  $n+2$--dimensional hyperbasis in the sense of \cite[Definition 12.11]{HHbook} 
by using the argument in  the proof of the second item of theorem \ref{2.12}. 

{\bf First order definability:} In the next table the answers in the third column are to the question as to whether $\bold K$ is elementary or not. In all cases considered it is not. 
 
\vskip2.3mm

\begin{tabular}{|l|c|c|c|c|c|c|}    \hline
					Relation algebras	& Cylindric algebras                     &$\equiv$                                      \\

                                                               \hline
                                                                          $\bold S_d{\sf Ra}\CA_{\omega}\subseteq \bold K\subseteq \bold S_c{\sf Ra}\CA_5$  &$\bold S_d{\sf Nr}_n\CA_{\omega}\subseteq \bold K\subseteq 
\bold S_c{\sf Nr}_n\CA_{n+3}$&no, thm. \ref{rainbow}(2), \cite{r}\\

                                                                           \hline
                                                                          ${\sf Ra}\CA_{\omega}\subseteq \bold K\subseteq \bold {\sf Ra}\CA_5$   &${\sf Nr}_n\CA_{\omega}\subseteq \bold K\subseteq \bold 
{\sf Nr}_n\CA_{n+1}$& no, \cite{bsl}, (4) thm \ref{rainbow}\\

\hline

\end{tabular}

\vskip2.3mm
    
We still do not know whether {\it there  is an elementary class} between
${\sf Nr}_n\CA_{\omega}$ and $\bold S_d{\sf Nr}_n\CA_{\omega}$, witness item(2) of theorem \ref{rainbow} and to the best of our knowledge 
whether $\bold S_c{\sf Nr}_n\CA_{n+k}$ for $k=1, 2$ is first order definable or not, remains so far unsettled.\footnote{ Fix $2<n<\omega$. We devise an $\omega$--rounded {\it non--atomic} game 
$\bold G$ such that if $\B$ is an atomic algebra having countably many atoms, then a \ws\ for \pe\ in 
$\bold G(\B)\implies
\B\in {\sf Nr}_n\CA_{\omega}$. 
This game $\bold G$ is strictly stronger than $H$ used in the second item
of theorem \ref{rainbow}, by theorem \ref{stronger}. The game is played on both $\lambda$--neat hypernetworks as defined in the proof of the second 
item of theorem \ref{rainbow}, and complete labelled graphs (possibly by non--atoms)
with no consistency conditions.  The play at a certain point, like in $H$ as in the second item of theorem \ref{rainbow},  will be a
$\lambda$--neat hypernetwork, call its
network part $X$, and we write
$X(\bar{x})$ for the atom the edge $\bar{x}$. By the network part we mean forgetting hyperedges getting 
non--atomic labels.
{\it An $n$-- matrix} is a finite complete graph with nodes including $0, \ldots, n-1$
with all edges labelled by {\it arbitrary elements} of $\B$. No consistency properties are assumed.
\pa\ can play an arbitrary $n$--matrix  $N$, \pe\ must replace $N(0, \ldots, n-1),$  by
some element $a\in \B$; this is a non-atomic move.
The final move is that \pa\ can pick a previously played $n$--matrix $N$, and pick any  tuple $\bar{x}=(x_0,\ldots, x_{n-1})$
whose atomic label is below $N(0, \ldots, n-1)$.
\pe\ must respond by extending  $X$ to $X'$ such that there is an embedding $\theta$ of $N$ into $X'$
 such that $\theta(0)=x_0\ldots , \theta(n-1)=x_{n-1}$ and for all $i_0, \ldots i_{n-1} \in N,$ we have
$$X(\theta(i_0)\ldots, \theta(i_{n-1}))\leq N(i_0,\ldots, i_{n-1}).$$
This ensures that in the limit, the constraints in
$N$ really define the element $a$.
Assume that $\B\in \CA_n$ is atomic and has countably many atoms. Assume that  \pe\ has a \ws\ in $\bold G(\B).$ 
Then the extra move involving non--atoms labelling matrices, ensures that  that every $n$--dimensional element generated by
$\B$ in a  dilation $\D\in \RCA_\omega$ having base $M$  
constructed from a \ws\ in $\bold G$ as the 
limit of the  $\lambda$--neat hypernetworks played during the game 
(and further assuming without loss that \pa\ plays every possible move)  
is already an element of $\B$.  Here we can work in $L_{\omega, \omega}$ as a vehicle for constructing $\D$; we do not need infinite conjunctions. 
In this case,  we have $\Sg^{\D}X=\Sg^{\Nr_n\D}X\cong \Nr_n\Sg^{\D}X$ for any $X\subseteq \B$; in particular, $\B=\Nr_n\D$ 
(which is stonger than $\At\B\cong \At\Nr_n\D$ as shown in item(3) of theorem \ref{rainbow} and theorem \ref{stronger}).
 For $k<\omega$, let $\bold G_k$ be the game $\bold G$ truncated to $k$ rounds.
Using the argument in the proof of the second item of theorem \ref{rainbow} replacing $H$ by $\bold G$ one shows that for $2<n<m<\omega$, if  
there exists a countable atom structure $\alpha$ such that \pe\ has a \ws\ in $\bold G_k(\Cm\alpha)$
for all $k\in \omega$ and \pa\ has a \ws\ in $F^m$, then any class $\bold K$, 
such that ${\sf Nr}_n\CA_{\omega}\subseteq \bold K\subseteq \bold S_c{\sf Nr}_n\CA_m$,
is not elementary. For $\alpha=\At\CA_{\Z, \N}$ and $m=n+3$ it is not hard to show that \pa\ has a \ws\ in $\bold G_k$ for some finite $k>2$ so this atom structure does not work here.}

{\bf Omitting types:} In the next table the status of $\Psi(l, m)$ and $\Psi(l, m)_f$ (as defined after lemma \ref{square}) is given for various values 
of $l$ and $m$ where $2<n\leq l<m\leq \omega$. The formula $\Psi(\omega, \omega)$ is the limiting case when models are 
ordinary and the number of variables used are $\omega$. In the table $m$--hyp is short hand for {\it infinite} $m$--dimensional
hyperbasis and $m$ basis is short for $m$--dimensional relational 
basis  ($3<m\leq \omega$). $\sf VT$ is short for Vaught's theorem: Any countable complete atomic $L_{\omega, \omega}$ 
theory has an atomic countable model. 
\vskip2mm
\begin{tabular}{|l|c|c|c|c|c|c|}    \hline
					
                                                                          $\Psi(n, \omega)$&yes, \cite{ANT}\\

                                                               \hline
                                                                          $\Psi(n, n+3)$& yes, thm \ref{2.12} \\

                                                               \hline
                                                                          $\Psi(n, n+2)_f$&yes, if there is $\R$ with $n$--blur and no  $n+2$-hyp, thm \ref{2.12} \\

                                                               \hline
                                                                          $\Psi(l, \omega)$&yes, $\mathfrak{E}_{k}(2,3)$ has strong $l$-blur, and no $\omega$-hyp, thm \ref{2.12}.\\

                                                                           \hline
                                                                          $\Psi(l, m)_f, l\leq m-1$ &yes, if there exists $\R$ with strong $l$-blur, and no $m$-hyp, thm \ref{2.12} \\

                                                                           \hline
                                                                          $\Psi(l, m), l\leq m-1$ &yes, if there exists $\R$ with strong $l$-blur, and no $m$-bases, \ref{2.12}\\

                                                                           \hline
                                                                          $\Psi(\omega, \omega)$&no, by {\sf VT}\\

                                                                          \hline

\end{tabular}


\begin{thebibliography}{}

\bibitem{Andreka} Andr\'eka, H., {\it Complexity of equations valid in algebras of relations}. 
Annals of Pure and Applied logic, {\bf 89} (1997),  pp.149--209.


\bibitem{1} H. Andr\'eka, M. Ferenczi and  I. N\'emeti, (Editors), {\bf Cylindric-like Algebras and Algebraic Logic},
Bolyai Society Mathematical Studies and Springer-Verlag, {\bf 22} (2012).

\bibitem{ANT}  H. Andr\'eka,  I. N\'emeti and T. Sayed Ahmed,  {\it Omitting types for finite variable fragments and complete representations.}
Journal of Symbolic Logic. {\bf 73} (2008) pp. 65--89.

\bibitem{HMT1}  L. Henkin, J.D. Monk and  A. Tarski {\it Cylindric Algebras Part I}.
North Holland, 1971.

\bibitem{HMT2}  L. Henkin, J.D. Monk and  A. Tarski {\it Cylindric Algebras Part I}.
North Holland, 1985.

\bibitem{t} R. Hirsch and T. Sayed Ahmed, {\it The neat embedding problem for algebras other than cylindric algebras
and for infinite dimensions.} Journal of Symbolic Logic {\bf 79}(1) (2014) pp.208--222.

\bibitem{r} R. Hirsch, {\it Relation algebra reducts of cylindric algebras and complete representations},
Journal of Symbolic Logic, {\bf 72}(2) (2007), pp.673--703.

\bibitem{r2} R. Hirsch {\it Corrigendum to `Relation algebra reducts of cylindric algebras and complete representations'} Journal of Symbolic Logic,
{\bf 78}(4) (2013), pp. 1345--1348.

\bibitem{HH} R. Hirsch and I. Hodkinson {\it Complete representations in algebraic logic},
Journal of Symbolic Logic, {\bf 62}(3)(1997) pp. 816--847.

\bibitem{HHbook}  R. Hirsch and I. Hodkinson,  {\it Relation algebras by games.}
Studies in Logic and the Foundations of Mathematics, {\bf 147} (2002).


\bibitem{strongca} R. Hirsch and I. Hodkinson {\it Strongly  representable atom structures  of cylindic  algebras}, Journal of Symbolic Logic 
{\bf 74}(2009), pp. 811--828

\bibitem{HHbook2} R. Hirsch and I. Hodkinson {\it  Completions and complete representations}, in \cite{1} pp. 61--90.


\bibitem{Hodkinson} I. Hodkinson  I., {\it Atom structures of relation and cylindric algebras.} Annals of pure and applied logic,
{\bf  89} (1997), pp. 117--148.

\bibitem {AU} I. Hodkinson, \emph{A construction of cylindric and polyadic algebras from atomic relation algebras.}
Algebra Universalis, {\bf 68} (2012), p. 257--285.


\bibitem{Maddux}  R. Maddux  {\it Non finite axiomatizability results for cylindric and relation algebras},
Journal of Symbolic Logic (1989) {\bf 54}, pp. 951--974.


\bibitem{Monk}  J.D. Monk  {\it Non finitizability of
classes of representable cylindric algebras}, Journal of Symbolic Logic {\bf 34}(1969) pp. 331--343.

\bibitem{bsl} T. Sayed Ahmed, {\it ${\sf Ra}\CA_n$ is not elementary for $n\geq 5$},
Bulletin Section of Logic, {\bf 37}(2)(2008), pp. 123--136.

\bibitem{weak} T. Sayed Ahmed, {\it Weakly representable atom structures that are not strongly representable,
with an application to first order logic,} Mathematical Logic Quarterly, {\bf 54}(3)(2008) pp. 294--306



\bibitem{Sayedneat}  T. Sayed Ahmed, {\it Neat reducts and neat embeddings in cylindric algebras}, in \cite{1}, pp. 105--134.

\bibitem{Sayed}  T. Sayed Ahmed  {\it Completions, Complete representations and Omitting types}, in \cite{1}, pp. 186--205.

\bibitem{au} T. Sayed Ahmed, {\it The class of completely representable polyadic algebras is elementary}. 
Algebra Universalis, {\bf 72} (2014), pp. 371--380. 

\bibitem{mlq} T. Sayed Ahmed {\it On notions of representability for cylindric--polyadic algebras and a solution to the finitizability problem for first order logic with equality}. 
Mathematical Logic Quarterly, in press.

\bibitem{SL}  T. Sayed Ahmed  and I. N\'emeti,  {\it On neat reducts of algebras of logic}, Studia Logica. {\bf 68(2)} (2001), pp. 229--262.

\end{thebibliography}
\end{document}